\numberwithin{equation}{section}
\newtheorem{theorem}{Theorem}[section]
\newtheorem{proposition}[theorem]{Proposition}
\newtheorem{corollary}[theorem]{Corollary}
\newtheorem{remark}[theorem]{Remark}
\newtheorem{lemma}[theorem]{Lemma}
\newtheorem{assumption}{Assumption}[section]
\newcommand {\R}{\mathbb{R}}
\newcommand{\diff}{{\rm d}}
\newcommand{\lev}{L\'{e}vy }
\title{Optimal Periodic Double-Barrier Strategies for Spectrally Negative L\'evy Processes}
\author[]{Kazutoshi Yamazaki\thanks{Email: k.yamazaki@uq.edu.au} }
\author[]{Qingyuan Zhang\thanks{(Corresponding author) School of Mathematics and Physics, The University of Queensland, St Lucia, Brisbane, QLD 4072, Australia. Email: qingyuan.zhang@uq.net.au}}
\affil{School of Mathematics and Physics, The University of Queensland}
\date{\today}
\begin{document}

\maketitle
\noindent

\textbf{Abstract.} We study a stochastic control problem where the underlying process follows a spectrally negative L\'evy process. A controller can continuously increase the process but only decrease it at independent Poisson arrival times. We show the optimality of the double-barrier strategy, which increases the process whenever it would fall below some lower barrier and decreases it whenever it is observed above a higher barrier. An optimal strategy and the associated value function are written semi-explicitly using scale functions. Numerical results are also given.

AMS 2020 Subject Classifications: 60G51, 93E20, 90B05.

\textbf{Keywords.} stochastic control, spectrally one-sided L\'evy processes, scale functions, periodic observations

\section{Introduction}
We revisit a stochastic control problem, where the objective is to optimally modify a stochastic process to minimize the expected net present value (NPV) of costs. We consider a two-sided version of this problem, where the process can be modified in both directions---increased or decreased. In this context, the total cost comprises a running cost, represented as a function $f$ of the controlled process accumulated over time, and control costs (or rewards) proportional to the magnitude of the applied control. Such problems have applications in various fields; see \cite{avram_exit_2004, loeffen_optimality_2008} for examples in finance and insurance, and \cite{dai_brownian_2013, dai_brownian_2013-1} for inventory management. 

Specifically, in our problem, the state of a system, for example, the level of inventory in inventory management, is modeled by a spectrally negative L\'evy process (L\'evy process with only downward jumps), where the controller can continuously increase the process but can only decrease it at independent Poisson arrival times. This restriction on downward control to discrete times distinguishes our problem from classical studies of two-sided stochastic control, such as \cite{baurdoux_optimality_2015}, which typically assume that controls in both directions can be applied continuously. Recently, stochastic control problems with random discrete control opportunities have received much attention in the literature; see, for example, \cite{albrecher_optimal_2011, avanzi_periodic_2013, dong_spectrally_2019, noba_optimal_2018, perez_optimal_2020, zhao_optimal_2017}. For recent results on \lev processes observed at Poisson arrival times, see \cite{albrecher_exit_2016, lkabous_poissonian_2021}. 

Restricting downward control opportunities to random discrete times can have useful implications in practice. For instance, in inventory management, selling is often more challenging than replenishing. While replenishment from suppliers may occur continuously, selling surplus stock might not be feasible in continuous time but instead requires some waiting period to find a suitable buyer. In such cases, modeling downward control opportunities with random discrete times is sensible. More generally, the problem we address is applicable in situations where there are tighter constraints on downward control of the stochastic process. 

The motivation for using Poisson arrival times to model control opportunities is two-fold. First, since the inter-arrival times of a Poisson process are exponentially distributed, the solution to the problem can be computed semi-explicitly using scale functions. To the best of our knowledge, explicit analytical solutions fail to exist for other types of discrete random times. In such cases, numerical methods are typically employed, as the state space must be expanded to ensure the problem remains Markovian. Additionally, insights from the mathematical finance literature suggest that the solution to the constant inter-arrival model, where controls can only be applied at deterministic, uniformly spaced times, can be approximated by the solution to the Poissonian inter-arrival model. For a study on this topic, we refer readers to \cite{leung_analytic_2014}.

As is standard in the literature, the running-cost function is assumed to be convex. The aim of this study is two-fold: first, to confirm the optimality of a barrier strategy for the Poisson inter-arrival model, and second, to classify the optimal barrier strategy. We follow the standard guess-and-verify approach to solve this problem:
\begin{enumerate}
    \item The NPV of costs corresponding to the double-barrier strategy is computed semi-explicitly using scale functions. The control cost has been computed in \cite{noba_optimal_2018} and the running cost is computed using a similar technique as in \cite{mata_bailout_2023}. 
    \item The candidate barriers, denoted as $(a^*, b^*)$, are selected using a probabilistic approach mentioned in \cite{noba_stochastic_2022} (see Section \ref{subsection_candidate_barriers}), rather than the standard smooth-fit principle for identifying the candidate barrier(s) in similar problems. Nevertheless, for the current problem, the equivalence of the two approaches is established.
    \item The existence and uniqueness of $(a^*, b^*)$ is established through a probabilistic argument under a most general condition. This finding further intuits previous studies of similar problems.
    \item The optimality of the candidate double-barrier strategy is confirmed via a verification lemma, using a standard argument that leverages the analytical properties of the scale functions and fluctuation identities.
\end{enumerate}

The remainder of this paper proceeds as follows: Section 2 presents the mathematical formulation of the problem considered. Section 3 studies double-barrier strategies and computes the corresponding NPV of costs. Section 4 establishes the optimality of the double-barrier strategy through a verification lemma, followed by a numerical example in Section \ref{Sect: numerical result}. Long, technical proofs and discussions of our running assumptions are deferred to the appendix.

This paper uses $x+$ and $x-$ to denote the right- and left-hand limits at $x \in \mathbb{R}$, respectively. Additionally, terms like strictly increasing and strictly decreasing indicate a function’s strict monotonicity, while non-decreasing and non-increasing indicate weak monotonicity. The convexity of a function is always understood in the weak sense.

\section{Problem formulation}
Defined on a probability space $(\Omega, \mathcal{F}, \mathbb{P})$, let $X = (X(t); t \geq 0)$ be a one-dimensional spectrally negative L\'evy process. For $x \in \mathbb{R}$, we use $\mathbb{P}_x$ to denote the law of $X$ with initial value $x$, and $\mathbb{E}_x$ to denote the corresponding expectation operator. When $x = 0$, we drop the subscript and simply write $\mathbb{P}$ and $\mathbb{E}$. The \emph{Laplace exponent} of $X$ is given by
\begin{align}\label{Eq: laplace exponent}
    \psi(s) \coloneqq \log\mathbb{E} [e^{s X(1)}] = \gamma s + \frac{\sigma^2}{2} s^2 + \int_{(-\infty, 0)} (e^{sz} - 1 - sz 1_{\{z > -1\}}) \,\mu(\diff z), \quad s \geq 0,
\end{align}
for some $\gamma \in \mathbb{R}$, $\sigma \geq 0$ and a \lev measure $\mu$ on $(-\infty, 0)$ satisfying $\int_{(-\infty, 0)} (1 \wedge z^2) \, \mu(\diff z) < \infty$.

The process $X$ has paths of bounded variation if and only if $\sigma = 0$ and $\int_{(-\infty, 0)} (1 \wedge |z|) \,\mu(\diff z) < \infty$. In the case of bounded variation, $X$ admits the form
\begin{equation*}
    X(t) = \delta t - S(t), \quad t\geq 0,
\end{equation*}
where $(S(t); t\geq0)$ is a subordinator and
\begin{align}
    \delta \coloneqq \gamma - \int_{(-1, 0)} z\, \mu(\diff z). \label{Eq: drift finite variation}
\end{align}
We assume that $X$ is not the negative of a subordinator, and therefore $\delta > 0$.

In this paper, we consider the following stochastic control problem. Let $\mathcal{T}_r \coloneqq (T(i); i \geq 0)$ be the arrival times of a Poisson process $N^r = (N^r(t); t \geq 0)$ with intensity $r > 0$. The Poisson process $N^r$ and its corresponding arrival times $\mathcal{T}_r$ are independent of $X$. Let $\mathbb{F} \coloneqq (\mathcal{F}(t); t \geq 0)$ denote the (completed) filtration generated by $(X, N^r)$. A strategy $\pi = \{(R^\pi(t), L^\pi(t)); t \geq 0\}$ is a pair of non-decreasing, c\`adl\`ag, and $\mathbb{F}$-adapted processes, with $R^\pi(0-) = L^\pi(0-) = 0$ and an extra restriction on $L^\pi$ as described below. Under $\pi$, the controlled process $Y^\pi = (Y^\pi(t); t \geq 0)$ is given by
\[Y^\pi(t) \coloneqq X(t) + R^\pi(t) - L^\pi(t), \quad t \geq 0.\]

Specifically, the process $R^\pi$ controls the state process in the upward direction and can be activated continuously in time. In contrast, $L^\pi$ controls the state process in the downward direction, and can only be activated at the arrival times $\mathcal{T}_r$ of the Poisson process $N^r$. Mathematically, $L^\pi$ admits the form
\[L^\pi(t) = \int_{[0, t]}\nu^\pi(s) \, \diff N^r(s) = \sum^\infty_{i = 1} \nu^\pi(T(i)) 1_{\{T(i) \leq t\}},\]
for an $\mathbb{F}$-adapted c\`agl\`ad process $(\nu^\pi(t); t \geq 0)$.
    
Fix a discount factor $q > 0$ and an initial value $x \in \mathbb{R}$ for the state process. Associated with each admissible strategy $\pi$, the NPV of costs is given by
\begin{equation}\label{Eq: NPV of costs}
    v^\pi(x) = \mathbb{E}_x \left[\int^\infty_0 e^{-qt} f(Y^\pi(t))\, \diff t + \int_{[0, \infty)} e^{-qt}(C_U \, \diff R^\pi(t) + C_D \, \diff L^\pi(t))\right].
\end{equation}
Here, $f: \mathbb{R} \to \mathbb{R}$ is a function modeling the running cost, and $C_U$ and $C_D$ are real numbers representing the unit costs (if positive) or rewards (if negative) of controlling. For our problem to be well-defined, we assume
\begin{align}
    C_U + C_D > 0. \label{Eq: sum C}
\end{align}
Assumption \eqref{Eq: sum C} is standard in the literature, such as in \cite{baurdoux_optimality_2015}.

We impose the following assumptions.
\begin{assumption}\label{asm: on f}
    We assume that $f: \mathbb{R} \to \mathbb{R}$ satisfies the following.
    \begin{itemize}
        \item[(1)] $f$ is a convex, piecewise continuously differentiable function. Additionally, it is assumed to be slowly or regularly varying as $x \to \infty$ (resp., $x \to -\infty$) if $\lim_{x \to \infty} |f(x)| = \infty$ (resp., $\lim_{x \to -\infty} |f(x)| = \infty$).
        \item[(2)] There exists a number $\bar{a} \coloneqq \inf\{a \in \mathbb{R}: \tilde{f}'(a) \coloneqq f'(a) + qC_U \geq 0\} \in \mathbb{R}$ such that the function
        \begin{equation} \label{f_tilde_def}
            \tilde{f}(x) \coloneqq f(x) + qC_Ux, \quad x\in \mathbb{R},
        \end{equation}
        is strictly decreasing on $(-\infty, \bar{a})$ and non-decreasing on $(\bar{a}, \infty)$.
        \item[(3)] There exists a number $\bar{\bar{a}} \coloneqq \inf\{a \in \mathbb{R}: f'(a) - qC_D > 0\} \in \mathbb{R}$. 
    \end{itemize}
\end{assumption}

In Assumptions \ref{asm: on f}, the most crucial assumption is convexity, which is commonly made in similar stochastic control problems, such as in \cite{baurdoux_optimality_2015} and \cite{perez_optimal_2020}. Regarding the other technical conditions, the growth condition is imposed for integrability, while (2) and (3) are imposed to avoid the cases where it is optimal not to control the process from below or above. In \cite{baurdoux_optimality_2015}, condition (2) is imposed but  (3) is not explicitly specified. We discuss the case where (3) fails to hold in Appendix \ref{Sect: assumption on f (3)}, showing that in such cases it is optimal not to activate the downward control.

Every polynomial with degree $n \geq 1$ is regularly varying at $\pm \infty$. Hence, any convex, piecewise continuously differentiable function $f$, where $f$ is a polynomial of degree $n \geq 1$ outside a compact interval, satisfies Assumption \ref{asm: on f}(1).

\begin{remark}\label{Remark: polynomial growth}
    By Assumption \ref{asm: on f}, $f$ grows at most polynomially, and the derivatives satisfy $f'(-\infty) \coloneqq \lim_{x \to -\infty} f'(x) \in [-\infty, -qC_U)$ and $f'(\infty) \coloneqq \lim_{x \to \infty} f'(x) \in (qC_D, \infty]$.
\end{remark}

Moreover, by \eqref{Eq: sum C} and the convexity of $f$, we have $\overline{a} < \bar{\bar{a}}$. We assume the following for integrability for the \lev process $X$. 
\begin{assumption} \label{asm: on X}
    There exists $\theta > 0$ such that $\int_{(-\infty, -1]} \exp(\theta |y|)\, \mu(\diff y) < \infty$, which guarantees $\mathbb{E}(X(1)) = \psi'(0+) \in (-\infty, \infty)$ and $\mathbb{E}_x[\int^\infty_{0}e^{-qs} |f(X(s))|\, \diff s] < \infty$, for all $x \in \mathbb{R}$, by Assumption \ref{asm: on f}(1) and Remark \ref{Remark: polynomial growth}. 
\end{assumption}

We call a strategy $\pi$ \textit{admissible} if it further satisfies the following integrability conditions: (1) $\mathbb{E}_x[\int^\infty_{0}e^{-qs} |f(Y^\pi(s))|\, \diff s] < \infty$, and (2) $\mathbb{E}_x[\int_{[0, \infty)}e^{-qs} (\diff R^\pi(s) + \diff L^\pi(s))] < \infty$. Let $\Pi$ be the set of all admissible strategies.
 The objective of this stochastic control problem is to derive an optimal value function
\[
v(x) \coloneqq\inf_{\pi \in \Pi} v^\pi(x), \quad x \in \mathbb{R},
\]
and an associated optimal strategy $\pi^*$ such that $v(x) = v^{\pi^*}(x)$, if such a strategy exists.

\section{Periodic-classical barrier strategies}\label{Sect: contruction of process}
The objective of this paper is to show the optimality of a \textit{periodic-classical barrier strategy} 
\[\pi_{a,b} \coloneqq \{(L^{a,b}(t), R^{a,b}(t)); t \geq 0 \},\]
which is parameterized by $a,b \in \mathbb{R}$ with $a < b$. The controlled process 
\[ Y^{a,b}(t) \coloneqq X(t) + R^{a,b}(t) - L^{a,b}(t), \quad t \geq 0, \]
is continuously reflected from below at the lower barrier $a$ and periodically reflected from above at the upper barrier $b$. In other words, the controlled process is never allowed to fall below $a$ and is thus reflected upwards whenever it would fall below this barrier. In contrast, at any decision time in $\mathcal{T}_r$, if the controlled process is observed above $b$, it is immediately pushed down to $b$. In particular, for the case $b = \infty$, $Y^{a,\infty}$ reduces to the L\'evy process reflected from below in the classical sense. The process $Y^{a, b}$ has been studied in works such as \cite{noba_optimal_2018}, among others. As described in \cite{noba_optimal_2018}, it can be defined as a concatenation of classical reflected processes with additional downward jumps. An example is provided in Figure \ref{Fig: double-barrier}. 
\begin{figure}[htbp]
\centering
\begin{subfigure}[b]{0.49\textwidth}
    \centering
    \includegraphics[width=\textwidth]{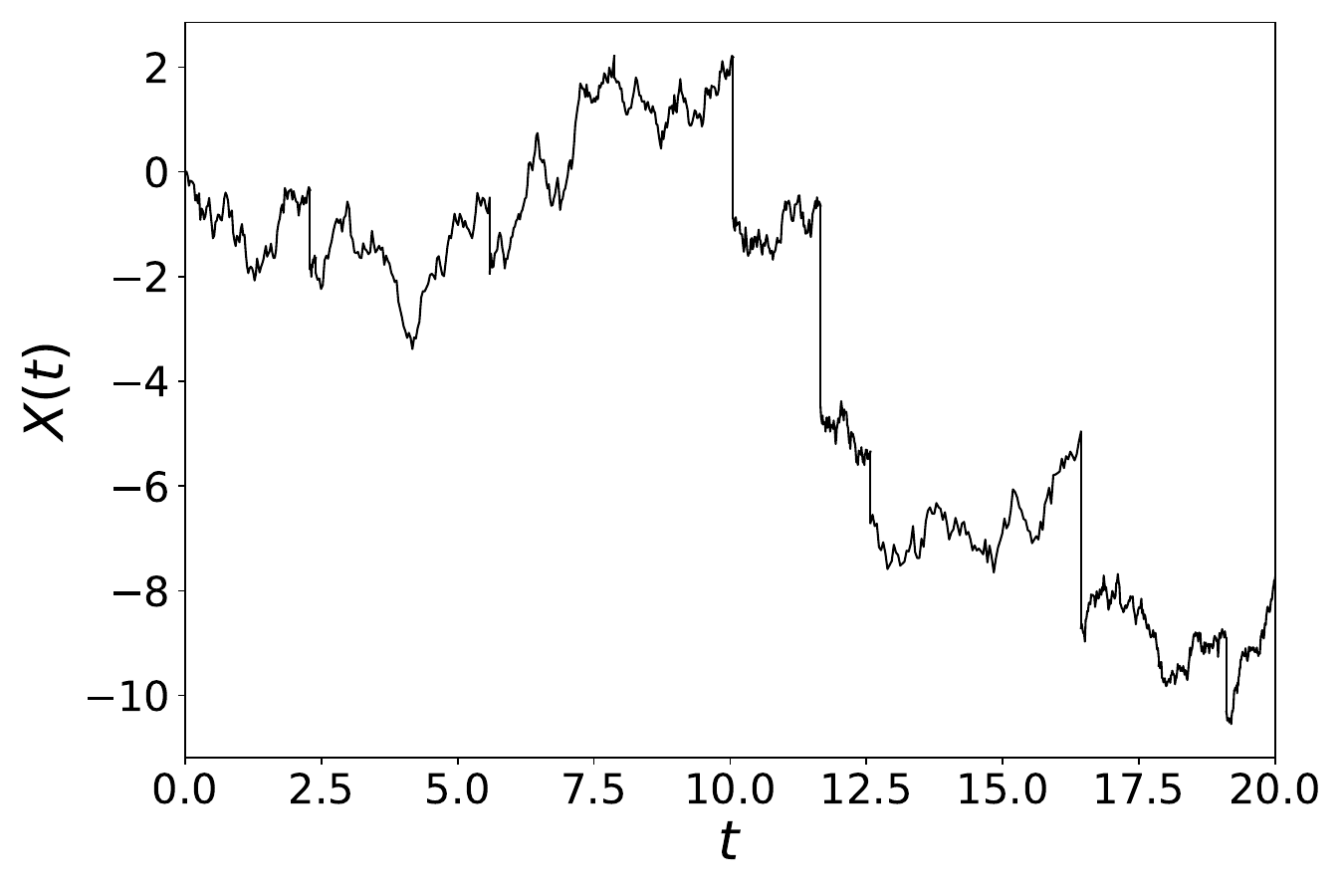}
\end{subfigure}
\begin{subfigure}[b]{0.49\textwidth}
    \centering
    \includegraphics[width=\textwidth]{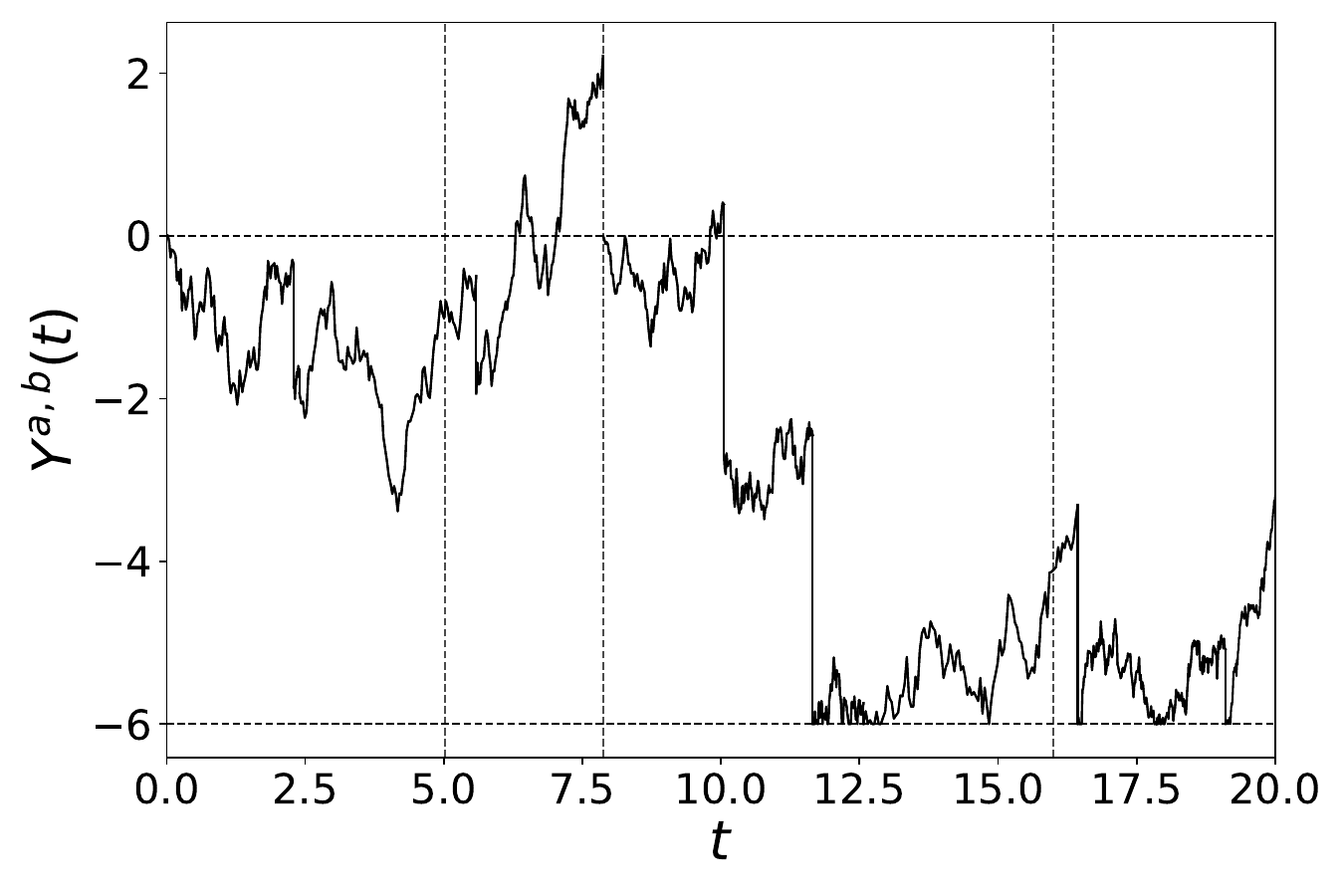}
\end{subfigure}
\vspace{-10pt}
\caption{Left: Sample path of a spectrally negative L\'evy process $X$. Right: Corresponding controlled process $Y^{a, b}$ with $a = -6$ and $b = 0$. Vertical dashed lines indicate the Poisson arrival times $\mathcal{T}_r$. Among the three arrival times displayed, $Y^{a, b}$ exceeds $b = 0$ only at the second one, causing downward control to activate there but not at the first or third arrival times.}
\label{Fig: double-barrier}
\end{figure}

\subsection{Scale functions and fluctuation identities}
The NPV of costs under $\pi_{a,b}$ can be written in terms of scale functions, as obtained in \cite{mata_bailout_2023} and \cite{perez_optimal_2020}. For $q \geq 0$, the scale function $W^{(q)}: \mathbb{R} \to [0 , \infty)$ of the spectrally negative L\'evy process $X$ is defined as follows. On the positive half-line, it is a continuous and strictly increasing function that satisfies
\begin{align}\label{Eq: scale function laplace}
    \int_0^\infty e^{-\theta x} W^{(q)}(x)\, \diff x = \frac{1}{\psi(\theta) - q}, \quad \theta > \Phi_q. 
\end{align}
Here, $\psi$ is the Laplace exponent defined in \eqref{Eq: laplace exponent} and
\begin{align*}
\begin{split}
    \Phi_q \coloneqq \sup \{s \geq 0: \psi(s) = q\}, \quad q \geq 0,
\end{split}
\end{align*}
is its right inverse. On the negative half-line, $W^{(q)}$ is set to be zero. We refer the reader to \cite{kuznetsov_theory_2013} and \cite{kyprianou_fluctuations_2014} for a comprehensive review of scale functions. In this paper, the following smoothness properties of scale functions are of particular importance.
\begin{remark} \label{remark_smoothness_scale_function} By Lemmas 2.1, 2.4, and 3.1 of \cite{kuznetsov_theory_2013}, 
\begin{enumerate}
    \item[(1)] $W^{(q)}$ is differentiable almost everywhere (a.e.) on $\mathbb{R}$.  
    \item[(2)] If $X$ is of unbounded variation, or if the L\'evy measure does not have an atom, then $W^{(q)}$ is $C^1(\mathbb{R} \backslash {\{0 \}})$.
    \item[(3)] The value of $W^{(q)}$ at $x = 0$ is characterized as follows, 
    \begin{align*}
        W^{(q)}(0) &= \begin{cases}
            0, & \textrm{if $X$ is of unbounded variation,}\\
            \delta^{-1}, & \textrm{if $X$ is of bounded variation,}
        \end{cases}
    \end{align*}
    where $\delta > 0$ is defined in \eqref{Eq: drift finite variation}.
\end{enumerate}
\end{remark}
We define, for $x \in \mathbb{R}$, 
\begin{align*}
    \overline{W}^{(q)}(x) \coloneqq \int_0^{x} W^{(q)}(y)\, \diff y \qquad \textrm{and} \qquad
    \overline{\overline{W}}^{(q)}(x) \coloneqq \int_0^{x} \overline{W}^{(q)}(y)\, \diff y.
\end{align*}
For $x \in \mathbb{R}$ and $r > 0$, we also define the \emph{second scale function},
\begin{align}\label{Eq: scale function Z}
    Z^{(q)}(x, \Phi_{q + r}) &\coloneqq e^{\Phi_{q + r}x} \left(1 - r\int_0^{x} e^{-\Phi_{q + r}z} W^{(q)}(z)\, \diff z \right) = r\int_0^{\infty} e^{-\Phi_{q + r}z} W^{(q)}(z + x) \,\diff z
\end{align}
where the second equality holds because \eqref{Eq: scale function laplace} gives $\int_0^\infty  \mathrm{e}^{-\Phi_{q+r} x} W^{(q)}(x) \diff x = r^{-1}$.
Differentiating the second scale function with respect to the first argument, we obtain
\begin{align}
    Z^{(q)'}(x,\Phi_{q+r}) &\coloneqq \frac{\partial}{\partial x} Z^{(q)}(x,\Phi_{q+r}) = \Phi_{q+r} Z^{(q)}(x,\Phi_{q+r}) - rW^{(q)}(x), \quad x > 0. \label{Eq: Z' phi}
\end{align}
Moreover, for $q\geq 0$ and $x \in \R$, we define 
\begin{align*}
    Z^{(q)}(x) &\coloneqq 1 + q\overline{W}^{(q)}(x), \\
    \overline{Z}^{(q)}(x) &\coloneqq \int^x_0 Z^{(q)}(y)\, \diff y = x + q \overline{\overline{W}}^{(q)}(x).
\end{align*}
We define the following for $b > a$, $x \in \mathbb{R}$, and $h: \mathbb{R} \to \mathbb{R}$: 
\begin{align}
    \rho_{a, b}^{(q)}(x; h) &\coloneqq \int^{b}_a W^{(q)}(x - y)h(y)\, \diff y,  \label{Eq: rho} \\
    \rho_{a, b}^{(q, r)}(x; h) &\coloneqq \rho_{a, b}^{(q)}(x; h) + r\int^{x}_b W^{(q + r)}(x - y)\rho_{a, b}^{(q)}(y; h)\, \diff y. \label{Eq: rho_r}
\end{align}
Additionally, we define
\begin{align}
    W^{(q, r)}_{a, b}(x) &\coloneqq W^{(q)}(x - a) + r\int^x_b W^{(q + r)}(x - y)W^{(q)}(y - a)\, \diff y, \label{Eq: W_b, a} \\
    Z^{(q, r)}_{a, b}(x) &\coloneqq Z^{(q)}(x - a) + r\int^x_bW^{(q + r)}(x - y) Z^{(q)}(y - a)\, \diff y, \label{Eq: Z_a, b} \\
    \overline{Z}^{(q, r)}_{a, b}(x) &\coloneqq \overline{Z}^{(q)}(x - a) + r\int^x_bW^{(q + r)}(x - y) \overline{Z}^{(q)}(y - a)\, \diff y. \label{Eq: bar Z_a, b}
\end{align}
For the periodic-classical barrier strategies, we decompose the NPV of costs as
\begin{equation*}
    v_{a,b}(x) = v_{a, b}^{LR}(x) + v_{a, b}^{f}(x),
\end{equation*}
where
\begin{align*}
        v_{a, b}^{LR}(x) &\coloneqq C_D\mathbb{E}_{x}\left[ \int^\infty_0 e^{-qt}\, \diff L^{a, b}(t)\right] + C_U\mathbb{E}_{x}\left[ \int^\infty_0 e^{-qt}\, \diff  R^{a, b}(t)\right], \\
        v_{a, b}^{f}(x) &\coloneqq \mathbb{E}_x\left[\int^\infty_0e^{-qt} f(Y^{a, b}(t))\, \diff  t\right].
\end{align*}

Using \eqref{Eq: W_b, a}--\eqref{Eq: bar Z_a, b}, an explicit expression for the control costs was derived in the proof of Lemma 3.1 in \cite{noba_optimal_2018}. Generalizing their results, we obtain the following lemma. 
\begin{lemma}[Lemma 3.1 in \cite{noba_optimal_2018}]\label{Lemma: control cost}
    For $a < b$ and $x \in \mathbb{R}$,
\begin{multline}\label{Eq: control cost}
    v_{a, b}^{LR}(x) = \left(\frac{r}{q\Phi_{q + r}} \frac{C_UZ^{(q)}(b - a) + C_D}{Z^{(q)}(b - a, \Phi_{q + r})} + \frac{C_U}{\Phi_{q + r}}\right) \left(Z^{(q, r)}_{a, b}(x) - rZ^{(q)}(b - a)\overline{W}^{(q + r)}(x - b)\right)\\
    - rC_D\overline{\overline{W}}^{(q + r)}(x - b) - C_U\left(\overline{Z}^{(q, r)}_{a, b}(x) + \frac{\psi'(0+)}{q} - r\overline{Z}^{(q)}(b - a) \overline{W}^{(q + r)}(x - b)\right).
\end{multline}
\end{lemma}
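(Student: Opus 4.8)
The plan is to set up a renewal equation across the Poisson decision times and then express everything through standard scale-function identities for the spectrally negative \lev process reflected from below, following and slightly generalizing the argument behind Lemma 3.1 of \cite{noba_optimal_2018}. Write $T \coloneqq T(1)\sim\mathrm{Exp}(r)$ for the first decision time and let $Y^a \coloneqq Y^{a,\infty}$ be $X$ reflected from below at $a$, with $R^a$ the associated reflection process. On $[0,T)$ the controlled process coincides pathwise with $Y^a$, so there $R^{a,b}=R^a$ and $L^{a,b}\equiv 0$; at time $T$ a downward control of size $(Y^a(T)-b)^+$ is exerted, after which the process restarts from $Y^a(T)\wedge b$. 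By the strong Markov property and the memorylessness of the exponential clock,
\[
v_{a,b}^{LR}(x) = C_U\,\mathbb{E}_x\Big[\int_{[0,T)} e^{-qt}\,\diff R^a(t)\Big] + \mathbb{E}_x\Big[e^{-qT}\big(C_D (Y^a(T)-b)^+ + v_{a,b}^{LR}(Y^a(T)\wedge b)\big)\Big],
\]
and \eqref{Eq: control cost} asserts that the displayed expression is the (suitably growth-restricted, hence unique) solution of this equation.

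To turn this into closed form I would collect the following identities for $Y^a$; since $T$ is an independent $\mathrm{Exp}(r)$ time, each reduces to a $(q+r)$-resolvent of $Y^a$ and is classical (see \cite{kyprianou_fluctuations_2014}): the $q$-discounted total reflection over the infinite horizon, $\mathbb{E}_x[\int_{[0,\infty)} e^{-qt}\diff R^a(t)]$, expressible through $\overline{Z}^{(q)}(\cdot-a)$ and $\psi'(0+)/q$, which produces the $-C_U\big(\overline{Z}^{(q,r)}_{a,b}(x)+\psi'(0+)/q-\cdots\big)$ block; the $q$-discounted law of the position at $T$, $\mathbb{E}_x[e^{-qT};\,Y^a(T)\in\diff y]=\tfrac{r}{q+r}$ times the $(q+r)$-resolvent density of $Y^a$, known through $W^{(q)}$, $Z^{(q)}$ and $e^{-\Phi_{q+r}(\cdot)}$; and, deduced from it, the discounted overshoot $\mathbb{E}_x[e^{-qT}(Y^a(T)-b)^+]$, which is the source of $-rC_D\overline{\overline{W}}^{(q+r)}(x-b)$, together with the discounted probability $\mathbb{E}_x[e^{-qT}1_{\{Y^a(T)>b\}}]$ of a down-reflection at $T$. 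The role of the auxiliary functions \eqref{Eq: W_b, a}--\eqref{Eq: bar Z_a, b} is exactly that the map $h\mapsto h+r\int_b^x W^{(q+r)}(x-y)h(y)\,\diff y$ (viewed as a function of $x$) encodes one round of ``the process runs above $b$ until a Poisson tick and is then pushed down to $b$''; iterating the renewal equation thus yields a geometric-type resummation over the number of down-reflections whose closed form, using $\int_0^\infty e^{-\Phi_{q+r}z}W^{(q)}(z)\,\diff z=r^{-1}$ from \eqref{Eq: scale function Z}, collapses to the prefactor $\frac{r}{q\Phi_{q+r}}\frac{C_UZ^{(q)}(b-a)+C_D}{Z^{(q)}(b-a,\Phi_{q+r})}+\frac{C_U}{\Phi_{q+r}}$ multiplying $Z^{(q,r)}_{a,b}(x)-rZ^{(q)}(b-a)\overline{W}^{(q+r)}(x-b)$.

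Concretely one can finish either constructively, as in \cite{noba_optimal_2018} — solve the renewal equation by iteration, expand by the number $k\ge 0$ of down-reflections before time $t$, sum the resulting series, and regroup into $W^{(q,r)}_{a,b}$, $Z^{(q,r)}_{a,b}$, $\overline{Z}^{(q,r)}_{a,b}$ — or by verification, plugging the claimed expression into the renewal equation and checking the identity with the help of \eqref{Eq: Z' phi} and the defining relations \eqref{Eq: W_b, a}--\eqref{Eq: bar Z_a, b}, then invoking uniqueness. I expect the main obstacle to be purely computational bookkeeping: getting every constant right — the $\psi'(0+)/q$ term from the infinite-horizon discounted reflection, the overshoot term $-rC_D\overline{\overline{W}}^{(q+r)}(x-b)$, and the ``restart'' corrections $-rZ^{(q)}(b-a)\overline{W}^{(q+r)}(x-b)$ and $-r\overline{Z}^{(q)}(b-a)\overline{W}^{(q+r)}(x-b)$ — and verifying that the convolution structure of \eqref{Eq: W_b, a}--\eqref{Eq: bar Z_a, b} reproduces the resummation exactly. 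There is no conceptual novelty beyond \cite{noba_optimal_2018} and \cite{mata_bailout_2023}; the content of the lemma is a careful re-derivation carrying the two free unit costs $C_U,C_D$ in place of a single fixed pair.
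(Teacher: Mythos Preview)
Your proposal is correct and aligned with the paper's treatment: the paper does not give an independent proof of this lemma but simply cites Lemma~3.1 of \cite{noba_optimal_2018} and states that the formula is obtained by generalizing that result to arbitrary unit costs $C_U,C_D$. Your renewal-equation/resolvent sketch is exactly that generalization, so there is nothing to add.
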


We provide an explicit expression for the running cost, which is computed using the proof technique from Proposition 5.1 in \cite{mata_bailout_2023}. The proof is given in Appendix \ref{proof_Lemma: inventory cost}.
\begin{lemma}\label{Lemma: inventory cost}
    For $a < b$ and $x \in \mathbb{R}$,
    \begin{multline}\label{Eq: inventory cost}
        v_{a, b}^{f}(x) = \frac{1}{q}\left(f(a) + \int^{\infty}_a f'(y)\frac{Z^{(q)}(b - y, \Phi_{q + r})}{Z^{(q)}(b - a, \Phi_{q + r})}\, \diff y\right) Z^{(q, r)}_{a, b}(x) - \rho_{a, b}^{(q, r)}(x; f)\\
        -r\overline{W}^{(q + r)}(x - b)v_{a, b}^{f}(b) - \int^{x}_b f(y) W^{(q + r)}(x - y)\, \diff y,
    \end{multline}
    where 
    \begin{align*}
    \begin{split}
        v_{a, b}^{f}(b) = -\rho_{a, b}^{(q)}(b; f) + \frac{1}{q}\left(f(a) + \int^{\infty}_a f'(y)\frac{Z^{(q)}(b - y, \Phi_{q + r})}{Z^{(q)}(b - a, \Phi_{q + r})}\, \diff y\right)Z^{(q)}(b - a). 
    \end{split}
    \end{align*}
\end{lemma}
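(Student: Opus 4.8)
The plan is to follow the scheme of Proposition~5.1 in \cite{mata_bailout_2023}: first rewrite the running-cost functional as a superposition of discounted occupation times above levels, then evaluate those occupation times via fluctuation identities, and finally recombine. Since $Y^{a,b}(t)\geq a$ for all $t\geq 0$ and, by Assumption~\ref{asm: on f}(1), $f(y)=f(a)+\int_a^y f'(u)\,\diff u$ for $y\geq a$, I would substitute this into $v_{a,b}^f$ and interchange the order of integration — justified by Assumption~\ref{asm: on X}, the at-most-polynomial growth of $f$ (Remark~\ref{Remark: polynomial growth}), and the admissibility of $\pi_{a,b}$ — to obtain
\begin{equation*}
v_{a,b}^f(x)=\frac{f(a)}{q}+\int_a^\infty f'(u)\,G_{a,b}(x,u)\,\diff u,\qquad G_{a,b}(x,u):=\mathbb{E}_x\Big[\int_0^\infty e^{-qt}\mathbf 1_{\{Y^{a,b}(t)>u\}}\,\diff t\Big].
\end{equation*}
It then remains to identify the occupation kernel $G_{a,b}(\cdot,u)$ in closed form for each level $u\geq a$.

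To evaluate $G_{a,b}$, let $\widehat Y^a$ denote $X$ reflected from below at $a$. For $x\leq b$, up to the first Poisson arrival time $T$ at which $\widehat Y^a$ is observed strictly above $b$, $Y^{a,b}$ coincides with $\widehat Y^a$, and at $T$ it restarts from $b$; the strong Markov property gives $G_{a,b}(x,u)=\mathbb{E}_x\big[\int_0^T e^{-qt}\mathbf 1_{\{\widehat Y^a(t)>u\}}\,\diff t\big]+G_{a,b}(b,u)\,\mathbb{E}_x[e^{-qT};\,T<\infty]$, and setting $x=b$ solves a scalar equation for $G_{a,b}(b,u)$. Conditionally on the path of $\widehat Y^a$, $T$ is the first point of a Poisson process with state-dependent rate $r\mathbf 1_{\{\widehat Y^a(\cdot)>b\}}$, so both terms are exponentially weighted additive functionals of $\widehat Y^a$ whose rate is $q$ below $b$ and $q+r$ above $b$; these are exactly the objects governed by the Poissonian exit and resolvent identities for reflected spectrally negative L\'evy processes (\cite{albrecher_exit_2016, noba_optimal_2018, perez_optimal_2020}), whose natural building blocks are $W^{(q)}$, $W^{(q+r)}$, $Z^{(q)}(\cdot,\Phi_{q+r})$ and the derived functions $W^{(q,r)}_{a,b}$, $Z^{(q,r)}_{a,b}$ of \eqref{Eq: W_b, a} and \eqref{Eq: Z_a, b}; here the weight $Z^{(q)}(b-u,\Phi_{q+r})/Z^{(q)}(b-a,\Phi_{q+r})$ should appear as the discounted first-observed-exceedance factor. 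For $x>b$ I would insert one more strong-Markov step at $\sigma:=\tau_b^-\wedge\mathbf{e}_r$, the minimum of the first passage of $X$ below $b$ and the first Poisson arrival: at $\sigma$ the controlled process is back in $(-\infty,b]$ (at $b$ if the arrival came first, at $X(\sigma)\vee a$ otherwise), which reduces $G_{a,b}(\cdot,u)$ on $(b,\infty)$ to its values on $(-\infty,b]$ via the classical first-passage identities at rate $q+r$ — $Z^{(q+r)}$, $W^{(q+r)}$ and the overshoot law of $X$ below $b$ — with the algebraic identity $1-Z^{(q+r)}(x-b)=-(q+r)\overline{W}^{(q+r)}(x-b)$ producing the eventual $-r\overline{W}^{(q+r)}(x-b)\,v_{a,b}^f(b)$ term, and the running cost accumulated above $b$ before $\sigma$ producing $-\int_b^x f(y)W^{(q+r)}(x-y)\,\diff y$.

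Substituting the resulting expression for $G_{a,b}(\cdot,u)$ back into the display above and applying Tonelli once more in $u$, I expect the following bookkeeping: the $Z^{(q)}(b-u,\Phi_{q+r})$-part of $G_{a,b}$, integrated against $f'$ and combined with $f(a)/q$, forms the coefficient of $Z^{(q,r)}_{a,b}(x)$; the $W$-type kernels in $G_{a,b}$, integrated against $f'$ and put back through $\int_a^y f'(u)\,\diff u=f(y)-f(a)$ and definitions \eqref{Eq: rho} and \eqref{Eq: rho_r}, collapse into $-\rho_{a,b}^{(q,r)}(x;f)$ and the last term $-\int_b^x f(y)W^{(q+r)}(x-y)\,\diff y$; and the renewal at the first effective down-control reproduces the self-referential $-r\overline{W}^{(q+r)}(x-b)\,v_{a,b}^f(b)$ term, which vanishes for $x\leq b$ since $\overline{W}^{(q+r)}(x-b)=0$ there. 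Reading the identity at $x=b$ — where the $W^{(q+r)}(0)$- and $\overline{W}^{(q+r)}(0)$-terms drop and $Z^{(q,r)}_{a,b}(b)=Z^{(q)}(b-a)$, $\rho_{a,b}^{(q,r)}(b;f)=\rho_{a,b}^{(q)}(b;f)$ — yields the stated closed form of $v_{a,b}^f(b)$, which makes the whole expression explicit.

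The hard part will be the two ingredients of the middle step: obtaining the closed form of the resolvent of the $a$-reflected process with killing rate $q$ below $b$ and $q+r$ above $b$, and of the Laplace transform $\mathbb{E}_x[e^{-qT};\,T<\infty]$ of the first observed exceedance, and then verifying in the reassembly that the $u$-integrals against $f'$ telescope exactly into $\rho_{a,b}^{(q,r)}$, $Z^{(q,r)}_{a,b}$ and the $Z^{(q)}(b-\cdot,\Phi_{q+r})$ weight with no leftover terms. Subsidiary care is needed to justify the Fubini interchanges from the growth of $f$ and Assumption~\ref{asm: on X}, to handle the bounded-variation case in the boundary terms (where $W^{(q)}(0)=\delta^{-1}\neq0$; cf.\ Remark~\ref{remark_smoothness_scale_function}), and to invoke the regularity of scale functions when differentiating under the integral sign.
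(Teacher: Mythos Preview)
Your approach is correct in outline and would produce the stated formula, but it takes a substantially longer route than the paper's own proof. The paper does not derive the occupation kernel $G_{a,b}(x,u)$ at all: it simply \emph{quotes} the resolvent of $Y^{a,b}$ from Proposition~5.1 of \cite{mata_bailout_2023} (restated in the appendix as Proposition~\ref{Prop: inventory cost computation} for bounded, compactly supported $h$, and extended to $f$ by dominated convergence in Corollary~\ref{Corollary: inventory cost computation}). That cited formula already gives $v_{a,b}^{h}(x)$ in exactly the shape of \eqref{Eq: inventory cost}, but with $v_{a,b}^{h}(b)$ expressed through $\int_a^\infty h(y)\,Z^{(q)}(b-y,\Phi_{q+r})\,\diff y$ rather than through $h'$. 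A single integration by parts --- the one referenced as ``the proof of Lemma~6.3 in \cite{mata_bailout_2023}'' --- then converts that integral into $h(a)+\int_a^\infty h'(y)\,Z^{(q)}(b-y,\Phi_{q+r})\,\diff y$, and setting $h=f$ finishes.

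Your plan, by contrast, \emph{re-derives} the resolvent: the strong-Markov renewal at the first observed exceedance for $x\leq b$, and the splitting at $\sigma=\tau_b^-\wedge\mathbf{e}_r$ for $x>b$, are exactly the standard ingredients used to prove the resolvent formula you could have cited. Writing $f=f(a)+\int_a^{\cdot} f'$ before computing amounts to performing the integration by parts first rather than last. So the two routes are the same computation in reversed order; the paper's is shorter only because it outsources the fluctuation-theoretic work to \cite{mata_bailout_2023}. If you carry your plan through, the pieces you flag as ``hard'' --- the killed-reflected resolvent and the Laplace transform of the first observed exceedance --- are precisely what Proposition~\ref{Prop: inventory cost computation} packages, so you may as well invoke it directly and spare yourself the reassembly.
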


Combining the above results and via integration by parts, we obtain the following, with the proof deferred to Appendix \ref{Sect: proof of NPV of costs}. Recall $\tilde{f}$ as defined in \eqref{f_tilde_def}.
\begin{lemma}\label{Lemma: NPV of costs}
    For $a < b$ and $x \in \mathbb{R}$, 
\begin{multline}\label{Eq: NPV of costs}
    v_{a,b}(x) = \frac{1}{q}\left(\tilde{f}(a) + \frac{\Gamma(a, b)}{Z^{(q)}(b - a, \Phi_{q + r})}\right)\left(Z^{(q, r)}_{a, b}(x) -r\overline{W}^{(q + r)}(x - b)Z^{(q)}(b - a) \right)\\
    - r(C_U + C_D)\overline{\overline{W}}^{(q + r)}(x - b) - C_Ux + r\overline{W}^{(q + r)}(x - b)\rho_{a, b}^{(q)}(b; \tilde{f}) - \tilde{f}(b)\overline{W}^{(q + r)}(x - b)\\
    - \rho_{a, b}^{(q, r)}(x; \tilde{f}) - C_U\frac{\psi'(0+)}{q} - \int^{x}_b \overline{W}^{(q + r)}(x - y)\tilde{f}'(y)\, \diff y,
\end{multline}
    where
    \begin{align}\label{Eq: Gamma function}
        \Gamma(a, b) &\coloneqq \int^{\infty}_a \tilde{f}'(y) Z^{(q)}(b - y, \Phi_{q + r})\, \diff y + \frac{r}{\Phi_{q + r}} (C_U + C_D), \quad a < b.
    \end{align}
\end{lemma}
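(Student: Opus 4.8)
The plan is to combine the two previously established formulas, namely the control-cost formula \eqref{Eq: control cost} of Lemma~\ref{Lemma: control cost} and the running-cost formula \eqref{Eq: inventory cost} of Lemma~\ref{Lemma: inventory cost}, via the decomposition $v_{a,b}(x) = v_{a,b}^{LR}(x) + v_{a,b}^{f}(x)$, and then to reorganize the resulting expression so that everything is written in terms of $\tilde f = f + qC_U x$ and its derivative rather than $f$, $f'$, and the bare linear term. The point of passing to $\tilde f$ is that the $C_U x$ pieces coming from the upward control cost merge cleanly with the $f$-dependent integrals; indeed $\tilde f'(y) = f'(y) + qC_U$, and one expects the additive constants to line up to produce the $-C_U x$ and $-C_U \psi'(0+)/q$ terms visible on the right-hand side of \eqref{Eq: NPV of costs}.

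Concretely, I would proceed as follows. First, substitute the closed forms for $v_{a,b}^{LR}$ and $v_{a,b}^{f}$ and group the terms according to which ``shape'' function of $x$ they multiply: the combination $Z^{(q,r)}_{a,b}(x) - r\overline{W}^{(q+r)}(x-b) Z^{(q)}(b-a)$, the pure $\overline{\overline{W}}^{(q+r)}(x-b)$ term, the $\overline{W}^{(q+r)}(x-b)$ terms, the $\rho^{(q,r)}_{a,b}(x;\cdot)$ term, and the residual convolution $\int_b^x W^{(q+r)}(x-y)(\cdot)\,\diff y$. The coefficient of the $Z$-combination should assemble into $\tfrac1q\big(\tilde f(a) + \Gamma(a,b)/Z^{(q)}(b-a,\Phi_{q+r})\big)$: here I would use the definition \eqref{Eq: Gamma function} of $\Gamma(a,b)$, together with the fact that $Z^{(q)}(b-y,\Phi_{q+r})$ is the kernel appearing in both $v^{LR}$ (through $Z^{(q)}(b-a)/Z^{(q)}(b-a,\Phi_{q+r})$ after recognizing $Z^{(q)}(b-a)$ as $\int_a^b$ of the appropriate derivative plus a boundary term) and $v^f$. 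The $r(C_U+C_D)$ prefactor on $\overline{\overline{W}}^{(q+r)}$ comes from adding the $rC_D$ term in \eqref{Eq: control cost} to the $rC_U$ contribution hidden inside the $f$-to-$\tilde f$ conversion.

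Second, the convolution terms $\int_b^x W^{(q+r)}(x-y) f(y)\,\diff y$ from the running cost and the linear piece from the control cost need to be merged and then simplified by integration by parts in $y$, turning $\int_b^x W^{(q+r)}(x-y)\tilde f(y)\,\diff y$ into a boundary term $\tilde f(b)\overline{W}^{(q+r)}(x-b)$ plus $\int_b^x \overline{W}^{(q+r)}(x-y)\tilde f'(y)\,\diff y$ — this is exactly the source of the last two terms in \eqref{Eq: NPV of costs}. Similarly, the various $\overline{W}^{(q+r)}(x-b)$ coefficients (the $v_{a,b}^f(b)$ term, the $rZ^{(q)}(b-a)$ multiples, and pieces from $v^{LR}$) should be consolidated into $r\overline{W}^{(q+r)}(x-b)\rho^{(q)}_{a,b}(b;\tilde f)$ using the explicit formula for $v_{a,b}^f(b)$ from Lemma~\ref{Lemma: inventory cost} and, again, the $f \mapsto \tilde f$ bookkeeping. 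I anticipate that the main obstacle is purely organizational rather than conceptual: keeping track of the numerous terms proportional to $\overline{W}^{(q+r)}(x-b)$ and verifying that the constants (in particular the $\psi'(0+)/q$ term and the rearrangement of $Z^{(q)}(b-a)$ versus $Z^{(q)}(b-a,\Phi_{q+r})$ in the denominators) cancel and recombine exactly as claimed. Because this is a lengthy but mechanical computation, it is natural — as the authors do — to relegate the full details to Appendix~\ref{Sect: proof of NPV of costs}, with the above being the skeleton one fills in.
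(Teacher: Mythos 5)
Your plan matches the paper's own proof: decompose $v_{a,b}=v_{a,b}^{LR}+v_{a,b}^{f}$, substitute the closed forms from Lemmas~\ref{Lemma: control cost} and~\ref{Lemma: inventory cost}, pass from $f$ to $\tilde f$, group by the shape functions $Z^{(q,r)}_{a,b}(x)-r\overline{W}^{(q+r)}(x-b)Z^{(q)}(b-a)$, $\overline{W}^{(q+r)}(x-b)$, $\overline{\overline{W}}^{(q+r)}(x-b)$, and the convolutions, and close the argument with integration by parts. The one algebraic ingredient your skeleton leaves implicit is the identity $\int_a^\infty Z^{(q)}(b-y,\Phi_{q+r})\,\diff y = \bigl(Z^{(q)}(b-a,\Phi_{q+r})+r\overline{W}^{(q)}(b-a)\bigr)/\Phi_{q+r}$, which is what converts the coefficient of the $Z$-combination from the $f'$-form in Lemma~\ref{Lemma: inventory cost} to the $\tilde f'$-form entering $\Gamma(a,b)$, but filling that in is exactly the mechanical work you already flagged.
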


\subsection{Smoothness of $v_{a, b}$} \label{Sect: smoothness}
In this section, we analyze the smoothness of the NPV of costs as defined in \eqref{Eq: NPV of costs}. Define the partial derivative of $\Gamma$, as given in \eqref{Eq: Gamma function}, as follows:
\begin{align}\label{Eq: gamma function}
\begin{split}
    \gamma(a, b) \coloneqq \frac{\partial}{\partial b} \Gamma(a, b) &= \Phi_{q+r} \int^{\infty}_a \tilde{f}'(y) Z^{(q)}(b - y, \Phi_{q + r})\, \diff y - r\rho_{a, b}^{(q)}(b; \tilde{f}') \\
    &= \Phi_{q + r} \Gamma(a, b) - r\left(C_U + C_D + \rho_{a, b}^{(q)}(b; \tilde{f}')\right), \quad a < b,
    \end{split}
\end{align}
where the first equality holds by \eqref{Eq: Z' phi}. It is worth noting that $\gamma(a, b)$ is independent of the values of $C_U$ and $C_D$. The last equality in \eqref{Eq: gamma function} is included solely to simplify complex expressions that appear later in the paper.
\begin{lemma}\label{Lemma: smoothness lemma}
    For $a < b$ and $x \in \mathbb{R}\backslash \{a\}$, 
    \begin{multline}\label{Eq: value derivative}
    \begin{split}
        v'_{a, b}(x) = \frac{\Gamma(a, b)}{Z^{(q)}(b - a, \Phi_{q + r})} W^{(q, r)}_{a, b}(x) - \rho_{a, b}^{(q, r)}(x; \tilde{f}') - \int^{x}_{b} W^{(q + r)}(x - y) \tilde{f}'(y)\, \diff y\\
        - r(C_U + C_D) \overline{W}^{(q + r)}(x - b) - C_U.
    \end{split}
    \end{multline}
    If $X$ is of unbounded variation,
    \begin{align}
    \begin{split}\label{Eq: value 2nd derivative}
        v''_{a, b}(x) &= \frac{\Gamma(a, b)}{Z^{(q)}(b - a, \Phi_{q + r})} \left(\frac{\diff }{\diff x} W^{(q, r)}_{a, b}(x)\right)\\
        &- \left(\int^{b}_a W^{(q)'}(x - y)\tilde{f}'(y)\, \diff y + r\int^{x}_b W^{(q + r)'}(x - y)\rho_{a, b}^{(q)}(y; \tilde{f}')\, \diff y\right)\\
        &- \int^{x}_{b} W^{(q + r)'}(x - y) \tilde{f}'(y)\, \diff y - r(C_U + C_D) W^{(q + r)}(x - b), \quad x \neq a.
    \end{split}
    \end{align}
\end{lemma}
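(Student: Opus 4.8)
The plan is to obtain both displays by differentiating the closed-form expression \eqref{Eq: NPV of costs} for $v_{a,b}$ term by term and then rearranging using the calculus of scale functions. The elementary inputs are the antiderivative relations $\tfrac{\diff}{\diff x}\overline{W}^{(q+r)}(x-b)=W^{(q+r)}(x-b)$, $\tfrac{\diff}{\diff x}\overline{\overline{W}}^{(q+r)}(x-b)=\overline{W}^{(q+r)}(x-b)$ and $Z^{(q)\prime}=qW^{(q)}$, together with the derivative identities for the periodic objects $Z^{(q,r)}_{a,b}$, $\overline{Z}^{(q,r)}_{a,b}$, $W^{(q,r)}_{a,b}$ and $\rho^{(q,r)}_{a,b}(\cdot;h)$; those not already recorded I would derive on the spot via Leibniz's rule applied to the convolution integrals in \eqref{Eq: rho_r} and \eqref{Eq: W_b, a}--\eqref{Eq: bar Z_a, b}. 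The interchange of $\tfrac{\diff}{\diff x}$ with the integrals is justified by the local boundedness of $W^{(q)}$ (and, for the second derivative, of $W^{(q)\prime}$ on $\mathbb{R}\setminus\{0\}$, which is $C^0$ there by Remark~\ref{remark_smoothness_scale_function}(2) in the unbounded-variation case) together with the piecewise continuous differentiability and polynomial growth of $\tilde f$ from Assumption~\ref{asm: on f} and Remark~\ref{Remark: polynomial growth}.

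For the first derivative, the prefactor $\tfrac1q\bigl(\tilde f(a)+\Gamma(a,b)/Z^{(q)}(b-a,\Phi_{q+r})\bigr)$ does not depend on $x$, and differentiating $Z^{(q,r)}_{a,b}(x)-r\overline{W}^{(q+r)}(x-b)Z^{(q)}(b-a)$ produces $qW^{(q,r)}_{a,b}(x)$ plus multiples of $W^{(q+r)}(x-b)$ coming from Leibniz boundary terms; the terms $-r(C_U+C_D)\overline{\overline{W}}^{(q+r)}(x-b)$ and $-C_Ux$ give $-r(C_U+C_D)\overline{W}^{(q+r)}(x-b)$ and $-C_U$; and differentiating $-\rho^{(q,r)}_{a,b}(x;\tilde f)$, $r\overline{W}^{(q+r)}(x-b)\rho^{(q)}_{a,b}(b;\tilde f)$, $-\tilde f(b)\overline{W}^{(q+r)}(x-b)$ and $-\int_b^x\overline{W}^{(q+r)}(x-y)\tilde f'(y)\,\diff y$, followed by an integration by parts that moves the derivative from $\tilde f$ onto $\tilde f'$ inside $\rho^{(q)}_{a,b}$ and inside the last integral, yields $-\rho^{(q,r)}_{a,b}(x;\tilde f')-\int_b^x W^{(q+r)}(x-y)\tilde f'(y)\,\diff y$ plus residual boundary terms at $y=a$ and $y=b$ (of the form $W^{(q)}(x-a)\tilde f(a)$, $W^{(q)}(x-b)\tilde f(b)$, $W^{(q+r)}(x-b)(\cdots)$). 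The point of the computation is that all these stray terms cancel in aggregate --- the $W^{(q)}(x-a)\tilde f(a)$ contribution against the one generated by the prefactor times $qW^{(q)}(x-a)$, and the $W^{(q)}(x-b)$, $W^{(q+r)}(x-b)$ contributions among themselves --- leaving precisely \eqref{Eq: value derivative}. As a sanity check one verifies that the formula collapses to $v'_{a,b}(x)=-C_U$ for $x<a$.

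For the second derivative, in the unbounded-variation case Remark~\ref{remark_smoothness_scale_function}(2)--(3) gives $W^{(q)}(0)=W^{(q+r)}(0)=0$ and $W^{(q)}\in C^1(\mathbb{R}\setminus\{0\})$, so every Leibniz boundary term of the form $W^{(q+r)}(0)(\cdots)$ or $W^{(q)}(0)(\cdots)$ arising when differentiating the variable-limit integrals in \eqref{Eq: value derivative} vanishes. One then differentiates \eqref{Eq: value derivative} directly: $\tfrac{\diff}{\diff x}\rho^{(q,r)}_{a,b}(x;\tilde f')=\int_a^b W^{(q)\prime}(x-y)\tilde f'(y)\,\diff y+r\int_b^x W^{(q+r)\prime}(x-y)\rho^{(q)}_{a,b}(y;\tilde f')\,\diff y$, $\tfrac{\diff}{\diff x}\int_b^x W^{(q+r)}(x-y)\tilde f'(y)\,\diff y=\int_b^x W^{(q+r)\prime}(x-y)\tilde f'(y)\,\diff y$, $\tfrac{\diff}{\diff x}\overline{W}^{(q+r)}(x-b)=W^{(q+r)}(x-b)$, and (again using $W^{(q+r)}(0)=0$ to handle the kink at $b$) $\tfrac{\diff}{\diff x}W^{(q,r)}_{a,b}(x)$ exists and is continuous on $\mathbb{R}\setminus\{a\}$; assembling these terms gives \eqref{Eq: value 2nd derivative}. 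The exclusion of $x=a$ throughout is unavoidable: $W^{(q)}(\cdot-a)$ and $W^{(q,r)}_{a,b}$ are only guaranteed to be differentiable off $a$ by Remark~\ref{remark_smoothness_scale_function}(1), and the lower reflecting barrier generically creates a kink in $v_{a,b}$ at $a$.

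\emph{The main obstacle.} Nothing here is conceptually deep, but the bookkeeping of the boundary terms generated by repeated Leibniz differentiation and integration by parts is delicate. I would organize it by grouping the summands of \eqref{Eq: NPV of costs} into (i) the $Z$-product term, (ii) the $\overline{\overline{W}}$/$\overline{W}$/linear terms, (iii) the $\rho$-terms, and (iv) the final $\int_b^x$ integral, and verify the cancellations group by group, keeping the coefficients of $W^{(q)}(0)$ and $W^{(q+r)}(0)$ symbolic so that the derivation of \eqref{Eq: value derivative} is valid simultaneously in the bounded- and unbounded-variation cases (the former being precisely where these constants are nonzero, yet still cancel).
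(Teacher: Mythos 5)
Your proposal is correct and follows essentially the same route as the paper: differentiate the closed-form expression \eqref{Eq: NPV of costs} term by term, use Leibniz's rule and integration by parts to compute the derivatives of the convolution objects $Z^{(q,r)}_{a,b}$, $\rho^{(q,r)}_{a,b}$, etc.\ (the paper records these as identities \eqref{Eq: Z_a, b derivative}, \eqref{Eq: rho derivative}, and \eqref{Eq: rho_r derivative part 2}), observe that all boundary terms cancel, and then differentiate \eqref{Eq: value derivative} directly for the second derivative in the unbounded-variation case. The only caveat worth flagging is that a couple of your intermediate descriptions are slightly imprecise — the prefactor term contributes $\tilde f(a)\,W^{(q,r)}_{a,b}(x)$ (not $\tilde f(a)W^{(q)}(x-a)$), which cancels the $-\tilde f(a)\,W^{(q,r)}_{a,b}(x)$ arising from $\tfrac{\diff}{\diff x}\rho^{(q,r)}_{a,b}(x;\tilde f)$, and the integration by parts moves the derivative \emph{off $W^{(q)}$ onto} $\tilde f$ — but these are linguistic slips, not gaps in the argument.
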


The proof of Lemma \ref{Lemma: smoothness lemma} is deferred to Appendix \ref{Sect: proof of smoothness lemma}. In \eqref{Eq: value 2nd derivative}, the derivative $\frac{\diff }{\diff x} W^{(q, r)}_{a, b}(x)$ is well-defined for $x \neq a$ in light of \eqref{Eq: W_b, a} and the smoothness of scale functions as in Remark \ref{remark_smoothness_scale_function}.

In the remainder of this paper, we call a function \emph{sufficiently smooth} if it is continuously differentiable on $\mathbb{R}$ when $X$ is of bounded variation and twice continuously differentiable on $\mathbb{R}$ when $X$ is of unbounded variation.

\begin{lemma}\label{Lemma: smoothness remark}
    For $a < b$, the function $x \mapsto v_{a, b}(x)$ is sufficiently smooth if and only if $\Gamma(a,b) = 0$.
\end{lemma}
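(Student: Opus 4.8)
The plan is to reduce the statement to a single matching condition at the lower barrier $x=a$. First, by Lemma~\ref{Lemma: NPV of costs} the function $v_{a,b}$ is continuous on $\mathbb{R}$, since every term appearing there is continuous: the scale functions are continuous on $(0,\infty)$ and vanish on $(-\infty,0)$, the antiderivatives $\overline{W}^{(q+r)},\overline{\overline{W}}^{(q+r)}$ are continuous everywhere, and the convolutions $Z^{(q,r)}_{a,b},\overline{Z}^{(q,r)}_{a,b},\rho^{(q,r)}_{a,b}$ together with $\int_b^x\overline{W}^{(q+r)}(x-y)\tilde f'(y)\,\diff y$ are continuous by local boundedness of $\tilde f$ and $\tilde f'$. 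By Lemma~\ref{Lemma: smoothness lemma}, $v_{a,b}$ is differentiable on $\mathbb{R}\setminus\{a\}$ with $v'_{a,b}$ given by \eqref{Eq: value derivative}, and, when $X$ is of unbounded variation, twice differentiable there with $v''_{a,b}$ given by \eqref{Eq: value 2nd derivative}. Using Remark~\ref{remark_smoothness_scale_function} (in particular $W^{(q)}\in C^1(\mathbb{R}\setminus\{0\})$ in the unbounded-variation case) and the piecewise continuity of $\tilde f'$, I would verify that $v'_{a,b}$, and also $v''_{a,b}$ in the unbounded-variation case, is continuous on $\mathbb{R}\setminus\{a\}$; the only point where this needs care is the upper barrier $b$, and there it works because every term of \eqref{Eq: value derivative}--\eqref{Eq: value 2nd derivative} that becomes active as $x$ crosses $b$ — the integrals $\int_b^x(\cdot)\,\diff y$, $\overline{W}^{(q+r)}(x-b)$, and $W^{(q+r)}(x-b)$, the last satisfying $W^{(q+r)}(0)=0$ under unbounded variation — is continuous at $b$. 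Hence $v_{a,b}$ is sufficiently smooth on $\mathbb{R}$ if and only if the relevant one-sided derivatives agree at $a$: $v'_{a,b}(a-)=v'_{a,b}(a+)$ in the bounded-variation case, and both $v'_{a,b}(a-)=v'_{a,b}(a+)$ and $v''_{a,b}(a-)=v''_{a,b}(a+)$ in the unbounded-variation case, by the elementary fact that a continuous function that is differentiable off a point and has finite, coinciding one-sided derivatives there is continuously differentiable across it (applied first to $v_{a,b}$, then to $v'_{a,b}$).

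Next I would evaluate these one-sided limits directly from \eqref{Eq: value derivative} and \eqref{Eq: value 2nd derivative}. For $x<a$ every scale-function term in \eqref{Eq: value derivative} has a negative argument and therefore vanishes, so $v'_{a,b}\equiv -C_U$ on $(-\infty,a)$ and $v'_{a,b}(a-)=-C_U$; similarly $v''_{a,b}\equiv 0$ on $(-\infty,a)$, giving $v''_{a,b}(a-)=0$ when $X$ is of unbounded variation. For $x\in(a,b)$, \eqref{Eq: value derivative} collapses to
\[
v'_{a,b}(x)=\frac{\Gamma(a,b)}{Z^{(q)}(b-a,\Phi_{q+r})}\,W^{(q)}(x-a)-\int_a^x W^{(q)}(x-y)\tilde f'(y)\,\diff y-C_U,
\]
and the integral tends to $0$ as $x\downarrow a$ since $W^{(q)}$ is bounded near $0+$ and $\tilde f'$ near $a$; hence $v'_{a,b}(a+)=\Gamma(a,b)Z^{(q)}(b-a,\Phi_{q+r})^{-1}W^{(q)}(0+)-C_U$. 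Because $Z^{(q)}(b-a,\Phi_{q+r})>0$ and, by Remark~\ref{remark_smoothness_scale_function}(3), $W^{(q)}(0+)=\delta^{-1}>0$ in the bounded-variation case, the condition $v'_{a,b}(a-)=v'_{a,b}(a+)$ is precisely $\Gamma(a,b)=0$, settling that case. When $X$ is of unbounded variation $W^{(q)}(0+)=0$, so $v'_{a,b}$ matches at $a$ automatically and one must analyse $v''_{a,b}$: for $x\in(a,b)$, \eqref{Eq: value 2nd derivative} reduces to
\[
v''_{a,b}(x)=\frac{\Gamma(a,b)}{Z^{(q)}(b-a,\Phi_{q+r})}\,W^{(q)'}(x-a)-\int_a^x W^{(q)'}(x-y)\tilde f'(y)\,\diff y,
\]
where the integral is bounded in modulus by $\big(\sup_{[a,b]}|\tilde f'|\big)\,W^{(q)}(x-a)\to 0$ using $W^{(q)'}\ge 0$. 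If $\Gamma(a,b)=0$ the leading term vanishes identically on $(a,b)$, so $v''_{a,b}(a+)=0=v''_{a,b}(a-)$ and $v_{a,b}$ is sufficiently smooth. If $\Gamma(a,b)\neq 0$, then $v''_{a,b}(x)\to\Gamma(a,b)Z^{(q)}(b-a,\Phi_{q+r})^{-1}W^{(q)'}(0+)$ as $x\downarrow a$, which equals $2\Gamma(a,b)/(\sigma^2 Z^{(q)}(b-a,\Phi_{q+r}))\neq 0$ if $\sigma>0$ and is $\pm\infty$ if $\sigma=0$, by the classical boundary behaviour of $W^{(q)'}$ at the origin for unbounded-variation processes (see \cite{kuznetsov_theory_2013}); in either case $\lim_{x\downarrow a}v''_{a,b}(x)\neq 0=v''_{a,b}(a-)$, so $v_{a,b}$ is not $C^2$. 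This proves both implications when $X$ is of unbounded variation.

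I expect the genuine difficulties to be two bookkeeping points rather than any hard estimate. The first is confirming that smoothness is not lost at the upper barrier $b$: this requires pinning down exactly which terms of \eqref{Eq: value derivative}--\eqref{Eq: value 2nd derivative} switch on as $x$ crosses $b$ and checking each is continuous, and, in the unbounded-variation case, using $W^{(q+r)}(0)=0$ to control the term $W^{(q+r)}(x-b)$ in $v''_{a,b}$. The second, and more delicate, is the unbounded-variation case with $\sigma=0$, where $W^{(q)'}(0+)=+\infty$: one must check that $\int_a^x W^{(q)'}(x-y)\tilde f'(y)\,\diff y$ stays bounded as $x\downarrow a$ so that it cannot cancel the divergence of the leading term, and then invoke the precise behaviour of $W^{(q)'}$ near the origin. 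This second point is the real crux; everything else is routine algebra once the formulas of Lemma~\ref{Lemma: smoothness lemma} are in hand.
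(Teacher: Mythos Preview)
Your approach is correct and is essentially the same as the paper's: both reduce the question to matching one-sided derivatives at $x=a$ via the formulas of Lemma~\ref{Lemma: smoothness lemma}, and both conclude by invoking the boundary behaviour $W^{(q)}(0)>0$ (bounded variation) or $W^{(q)'}(0+)\neq 0$ (unbounded variation) from \cite{kuznetsov_theory_2013}. The paper's proof is simply terser---it does not spell out the continuity at $b$ or separate the $\sigma=0$ sub-case---whereas you make these bookkeeping points explicit.
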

\begin{proof}
    By Lemma \ref{Lemma: smoothness lemma}, we have
    \begin{equation*}
    \begin{split}
        v'_{a, b}(a-) = -C_U, \quad v'_{a, b}(a+) = -C_U + \frac{\Gamma(a, b)}{Z^{(q)}(b - a, \Phi_{q + r})} W^{(q)}(0),
    \end{split}
    \end{equation*}
    in the case that $X$ is of bounded variation, and 
    \begin{equation*}
    \begin{split}
        v''_{a, b}(a-) = 0, \quad v''_{a, b}(a+) = \frac{\Gamma(a, b)}{Z^{(q)}(b - a, \Phi_{q + r})} W^{(q)'}(0+),
    \end{split}
    \end{equation*}
    when $X$ is of unbounded variation. The lemma now follows using Remark \ref{remark_smoothness_scale_function}(3) and $W^{(q)'}(0+) \neq 0$ as in Lemma 3.2 of \cite{kuznetsov_theory_2013}.
\end{proof}

\subsection{Candidate barriers} \label{subsection_candidate_barriers}
Previous works, such as \cite{baurdoux_optimality_2015} and \cite{perez_optimal_2020}, have shown the optimality of barrier-type strategies for similar stochastic control problems by selecting an optimal barrier, or an optimal pair of barriers, according to the smooth-fit principle. For example, in the continuous-monitoring version of this problem studied in \cite{baurdoux_optimality_2015}, an optimal pair of barriers, denoted as $(a^*, b^*)$, is chosen such that the corresponding NPV of costs is sufficiently smooth. In this paper, we instead seek $(a^*, b^*)$ such that 
\begin{align*}
    v_{a^*, b^*}^{f'}(a^*) &= \mathbb{E}_{a^*}\left[\int^\infty_0e^{-qt} f'(Y^{a^*, b^*}(t))\, \diff  t\right] = -C_U,\\
    v_{a^*, b^*}^{f'}(b^*) &= \mathbb{E}_{b^*}\left[\int^\infty_0e^{-qt} f'(Y^{a^*, b^*}(t))\, \diff  t\right] = C_D. 
\end{align*}
This approach is recently recognized in \cite{noba_stochastic_2022} as a potential alternative for solving the kind of control problems that is currently studied. It is noted that the optimality of the single-barrier strategy for various one-sided control problems can be shown using this method within a one-sided control setting. By reviewing the two-sided control problem in \cite{baurdoux_optimality_2015} through this approach, we can further confirm that the conditions outlined above are satisfied. Hence, we expect that this approach will be similarly effective in identifying optimal barriers for our current problem.

The advantage of this approach is that it allows for a probabilistic interpretation. We observe that, intuitively, $\mathbb{E}_{x}\left[\int^\infty_0e^{-qt} f'(Y^\pi(t))\, \diff  t\right]$ represents the rate of change of the running cost when the controlled process $Y^\pi$ with initial value $x$ is shifted to the right by an infinitesimal amount. Thus, it is desirable for a strategy $\pi$ to keep $Y^{\pi}$ within the set $\mathcal{C}^\pi$ for as long as possible, where 
\[\mathcal{C}^\pi = \left\{x \in \mathbb{R}: -C_U \leq \mathbb{E}_{x}\left[\int^\infty_0e^{-qt} f'(Y^{\pi}(t))\, \diff  t\right] \leq C_D \right\}.\]
Furthermore, for the current problem, this approach is shown to be equivalent to the smooth-fit approach. We first establish this equivalence, where the following result proves useful. This result can again be derived by the resolvent measure obtained in Proposition 5.1 in \cite{mata_bailout_2023}, with the proof deferred to Appendix \ref{Sect: proof of auxiliary lemma}.
\begin{lemma}\label{Lemma: auxiliary result}
    For $a < b$ and $x \in \mathbb{R}$, we have
    \begin{multline}\label{Eq: convexity misc 1}
        v^{f'}_{a, b}(x) = \frac{\gamma(a, b)}{qZ^{(q)}(b - a; \Phi_{q + r})} Z^{(q, r)}_{a, b}(x) - \rho_{a, b}^{(q, r)}(x; \tilde{f}') - \int^{x}_b W^{(q + r)}(x - y)\tilde{f}'(y)\, \diff y\\
        - r\overline{W}^{(q + r)}(x - b)\left(-\rho_{a, b}^{(q)}(b; \tilde{f}') + \frac{Z^{(q)}(b - a)}{qZ^{(q)}(b - a; \Phi_{q + r})}\gamma(a, b)\right) - C_U.
    \end{multline}
In particular, at $x = a$ and $x = b$,
    \begin{align}
        v^{f'}_{a, b}(a) &= \frac{\gamma(a, b)}{qZ^{(q)}(b - a; \Phi_{q + r})} - C_U, \label{Eq: convexity misc 3} \\
        v^{f'}_{a, b}(b) &= \frac{Z^{(q)}(b - a)}{qZ^{(q)}(b - a; \Phi_{q + r})}\gamma(a, b) - \rho_{a, b}^{(q)}(b; \tilde{f}') - C_U. \label{Eq: convexity misc 2}
    \end{align}
\end{lemma}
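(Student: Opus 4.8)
The plan is to obtain \eqref{Eq: convexity misc 1} by differentiating the running-cost functional in a way that parallels the derivation of the control cost, and then to evaluate the resulting identity at the two barrier points. The starting observation is that $v^{f'}_{a,b}(x) = \E_x\!\left[\int_0^\infty e^{-qt} f'(Y^{a,b}(t))\,\diff t\right] + (\text{the term }{-C_U})$ should match the formula for $v^f_{a,b}$ from Lemma~\ref{Lemma: inventory cost} with $f$ replaced by $\tilde f' = f' + qC_U$ — more precisely, I would appeal directly to the resolvent measure of $Y^{a,b}$ established in Proposition~5.1 of \cite{mata_bailout_2023}, which is exactly the tool used to prove Lemma~\ref{Lemma: inventory cost}. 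Writing the resolvent as a density against the building blocks $W^{(q)}$, $W^{(q+r)}$, $Z^{(q)}(\cdot,\Phi_{q+r})$ and $Z^{(q,r)}_{a,b}$, one integrates $\tilde f'$ (rather than $f$) against it; the structural form of the answer is forced to be of the type displayed in \eqref{Eq: convexity misc 1}, with coefficients to be pinned down.

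First I would carry out this substitution carefully: replacing $f$ by $\tilde f'$ in the expression of Lemma~\ref{Lemma: inventory cost} and recognizing that $\int_a^\infty \tilde f'(y) Z^{(q)}(b-y,\Phi_{q+r})\,\diff y$, after using \eqref{Eq: Z' phi} to integrate by parts, produces $\gamma(a,b)/\Phi_{q+r}$ up to the explicit constant $\frac{r}{\Phi_{q+r}}(C_U+C_D)$ recorded in \eqref{Eq: gamma function}. This is what converts the prefactor $\frac1q\big(f(a)+\int_a^\infty f'(y)\tfrac{Z^{(q)}(b-y,\Phi_{q+r})}{Z^{(q)}(b-a,\Phi_{q+r})}\diff y\big)$ appearing in Lemma~\ref{Lemma: inventory cost} into the cleaner $\frac{\gamma(a,b)}{qZ^{(q)}(b-a;\Phi_{q+r})}$ in \eqref{Eq: convexity misc 1}; I would track the bookkeeping so the leftover constant collapses exactly to $-C_U$. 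The term $-r\overline{W}^{(q+r)}(x-b)v^f_{a,b}(b)$ in Lemma~\ref{Lemma: inventory cost} becomes, after the same substitution, $-r\overline{W}^{(q+r)}(x-b)$ times the value of the integrated quantity at $b$, and since $Z^{(q,r)}_{a,b}(b)=Z^{(q)}(b-a)$ and $\rho^{(q,r)}_{a,b}(b;\cdot)=\rho^{(q)}_{a,b}(b;\cdot)$ this reproduces precisely the bracket $\big(-\rho^{(q)}_{a,b}(b;\tilde f') + \tfrac{Z^{(q)}(b-a)}{qZ^{(q)}(b-a;\Phi_{q+r})}\gamma(a,b)\big)$ in the last line of \eqref{Eq: convexity misc 1}.

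Once \eqref{Eq: convexity misc 1} is in hand, the two specializations are immediate evaluations. At $x=a$: $Z^{(q,r)}_{a,b}(a)=Z^{(q)}(0)=1$, all the integrals $\rho^{(q,r)}_{a,b}(a;\cdot)$, $\int_b^a$, and $\overline{W}^{(q+r)}(a-b)$ vanish because their arguments lie where $W^{(q)}$, $W^{(q+r)}$ are supported on $[0,\infty)$ and here are negative or zero, giving \eqref{Eq: convexity misc 3}. At $x=b$: $Z^{(q,r)}_{a,b}(b)=Z^{(q)}(b-a)$, $\rho^{(q,r)}_{a,b}(b;\tilde f')=\rho^{(q)}_{a,b}(b;\tilde f')$, and $\overline{W}^{(q+r)}(b-b)=\overline{W}^{(q+r)}(0)=0$, so the fourth term drops and one is left with \eqref{Eq: convexity misc 2}.

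The main obstacle I anticipate is purely the algebraic bookkeeping of the constant terms: making sure that, after substituting $\tilde f'$ for $f$ into Lemma~\ref{Lemma: inventory cost} and applying the integration-by-parts identity \eqref{Eq: Z' phi}, the additive constants ($C_U\psi'(0+)/q$, the $\frac{r}{\Phi_{q+r}}(C_U+C_D)$ piece of $\gamma$, and the $qC_U$ contribution hidden in $\tilde f'$ versus $f'$) combine exactly to the single clean $-C_U$, with nothing left over. A secondary point requiring care is the justification that one may differentiate under the expectation — i.e. that $v^{f'}_{a,b}$ is genuinely the $x$-derivative-type object claimed — but this is handled by the same resolvent representation and the integrability hypotheses in Assumptions~\ref{asm: on f} and \ref{asm: on X}, so it is routine rather than substantive.
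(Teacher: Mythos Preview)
Your approach is essentially the paper's: both apply the resolvent representation from Proposition~5.1 of \cite{mata_bailout_2023} (reproduced as Proposition~\ref{Prop: inventory cost computation} in the appendix) and then simplify via the definition of $\gamma(a,b)$ in \eqref{Eq: gamma function}. The paper takes $h=f'$ in Proposition~\ref{Prop: inventory cost computation} and converts each term to $\tilde f'$ using $f'=\tilde f'-qC_U$ together with Lemma~\ref{lemma_Z_integral}, whereas your variant of integrating $\tilde f'$ directly and subtracting $C_U$ at the end is a harmless reordering of the same computation; just be sure to work from Proposition~\ref{Prop: inventory cost computation} itself rather than literally substituting into Lemma~\ref{Lemma: inventory cost} (which would introduce an unwanted $\tilde f''$), and note that in \eqref{Eq: gamma function} the companion of $\gamma(a,b)/\Phi_{q+r}$ is $\tfrac{r}{\Phi_{q+r}}\rho_{a,b}^{(q)}(b;\tilde f')$, not $\tfrac{r}{\Phi_{q+r}}(C_U+C_D)$.
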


\begin{proposition} \label{Prop: iff conditions} For $a < b$, the followings are equivalent:  
\begin{itemize}
    \item[(i)] $v^{f'}_{a, b}(a) = -C_U$ and $v^{f'}_{a, b}(b) = C_D$,
    \item[(ii)] $\Gamma(a, b) = 0$ and $\gamma(a,b) = 0$,
    \item[(iii)] $v_{a, b}$ is sufficiently smooth, with $v'_{a, b}(a) = -C_U$ and $v'_{a, b}(b) = C_D$.
\end{itemize}
\end{proposition}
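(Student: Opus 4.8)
The plan is to prove the equivalence by establishing the cycle $(i) \Leftrightarrow (ii)$ and $(ii) \Leftrightarrow (iii)$, using the three computational lemmas already at hand: Lemma~\ref{Lemma: auxiliary result} for the derivatives of the running-cost component, Lemma~\ref{Lemma: smoothness lemma} together with Lemma~\ref{Lemma: smoothness remark} for the smoothness of $v_{a,b}$, and the definition \eqref{Eq: gamma function} relating $\gamma(a,b)$ to $\Gamma(a,b)$.

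For $(i) \Leftrightarrow (ii)$, I would start from the two evaluated formulas \eqref{Eq: convexity misc 3} and \eqref{Eq: convexity misc 2}. The first says $v^{f'}_{a,b}(a) = -C_U$ holds if and only if $\gamma(a,b) = 0$, since $Z^{(q)}(b-a;\Phi_{q+r}) > 0$. Assuming $\gamma(a,b) = 0$, the second formula collapses to $v^{f'}_{a,b}(b) = -\rho^{(q)}_{a,b}(b;\tilde f') - C_U$, so the condition $v^{f'}_{a,b}(b) = C_D$ becomes $\rho^{(q)}_{a,b}(b;\tilde f') = -(C_U + C_D)$. Now I invoke the last equality in \eqref{Eq: gamma function}: when $\gamma(a,b) = 0$ we get $\Phi_{q+r}\Gamma(a,b) = r(C_U + C_D + \rho^{(q)}_{a,b}(b;\tilde f'))$, so $\Gamma(a,b) = 0$ is precisely equivalent to $\rho^{(q)}_{a,b}(b;\tilde f') = -(C_U+C_D)$. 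Chaining these gives: $(i)$ $\Leftrightarrow$ [$\gamma(a,b)=0$ and $\rho^{(q)}_{a,b}(b;\tilde f') = -(C_U+C_D)$] $\Leftrightarrow$ [$\gamma(a,b)=0$ and $\Gamma(a,b)=0$], which is $(ii)$.

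For $(ii) \Leftrightarrow (iii)$: Lemma~\ref{Lemma: smoothness remark} already gives that $v_{a,b}$ is sufficiently smooth iff $\Gamma(a,b) = 0$. So under $(ii)$, $v_{a,b}$ is sufficiently smooth, and it remains to identify $v'_{a,b}(a)$ and $v'_{a,b}(b)$. From \eqref{Eq: value derivative}, evaluating at $x = a$ and using $W^{(q,r)}_{a,b}(a) = W^{(q)}(0)$, $\rho^{(q,r)}_{a,b}(a;\tilde f') = 0$, $\overline{W}^{(q+r)}(a-b) = 0$, and $\Gamma(a,b)=0$, we get $v'_{a,b}(a) = -C_U$ directly. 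For $v'_{a,b}(b)$, I would note that $v'_{a,b} = v^{f'}_{a,b} + (v^{LR}_{a,b})'$ is not quite the cleanest route; instead, observe that $v_{a,b} = v^f_{a,b} + v^{LR}_{a,b}$ and more usefully that $v^{f'}_{a,b}(x) = v'_{a,b}(x) + C_U$ would hold only if the control-cost part contributed $-C_Ux$ — indeed \eqref{Eq: convexity misc 1} and \eqref{Eq: value derivative} differ only in the coefficient structure, and under $\Gamma(a,b)=\gamma(a,b)=0$ both reduce to the same expression, giving $v'_{a,b}(x) = v^{f'}_{a,b}(x)$ for all $x$. Hence $(i)$ and the smoothness together yield $(iii)$. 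Conversely $(iii)$ implies $\Gamma(a,b)=0$ by Lemma~\ref{Lemma: smoothness remark}, and then $v'_{a,b}(b) = C_D$ forces $v^{f'}_{a,b}(b) = C_D$ (since the two agree when $\Gamma = 0$, once one also checks $\gamma$-related terms vanish — which I would handle by noting $v'_{a,b}(a) = -C_U$ under $(iii)$ forces $\gamma(a,b)=0$ via \eqref{Eq: convexity misc 3} applied after subtracting off the known control-cost contribution), so $(iii) \Rightarrow (i)$.

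The main obstacle I anticipate is cleanly relating $v'_{a,b}$ to $v^{f'}_{a,b}$: the formula \eqref{Eq: value derivative} for $v'_{a,b}$ carries the control costs $C_U, C_D$ bundled into the $\Gamma(a,b)/Z^{(q)}(\cdot)$ coefficient and the $-r(C_U+C_D)\overline W^{(q+r)}$ term, whereas \eqref{Eq: convexity misc 1} for $v^{f'}_{a,b}$ carries $\gamma(a,b)$ and a different combination. I expect that once $\Gamma(a,b) = 0$ is imposed, the $\Gamma$-coefficient term in \eqref{Eq: value derivative} vanishes, and the relation \eqref{Eq: gamma function} lets one rewrite the remaining $r(C_U+C_D)$ terms in \eqref{Eq: value derivative} in terms of $\gamma(a,b)$ and $\rho^{(q)}_{a,b}(b;\tilde f')$, matching \eqref{Eq: convexity misc 1}; so the two derivative formulas coincide on the nose when $\Gamma(a,b)=0$, and this is the identity that makes the $(i) \Leftrightarrow (iii)$ direction go through. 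Verifying this algebraic collapse carefully — in both the bounded- and unbounded-variation cases, using Remark~\ref{remark_smoothness_scale_function}(3) for the value $W^{(q)}(0)$ — is the one place genuine computation is needed.
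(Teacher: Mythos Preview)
Your argument for $(i)\Leftrightarrow(ii)$ is correct and matches the paper's almost exactly: the paper substitutes \eqref{rho_a_b_expression} (which is your use of the last line of \eqref{Eq: gamma function}) into \eqref{Eq: convexity misc 2} to write $v^{f'}_{a,b}(b)$ directly as an affine combination of $\Gamma(a,b)$ and $\gamma(a,b)$, but the logic is the same as yours via the intermediate variable $\rho^{(q)}_{a,b}(b;\tilde f')$.

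For $(ii)\Leftrightarrow(iii)$, your forward direction $(ii)\Rightarrow(iii)$ is fine (and your observation that $v'_{a,b}=v^{f'}_{a,b}$ under $\Gamma=\gamma=0$ is exactly what the paper later records as Lemma~\ref{Lemma: convexity}(1)). The gap is in your converse $(iii)\Rightarrow(ii)$ as written in the third paragraph: you want to deduce $\gamma(a,b)=0$ from $v'_{a,b}(a)=-C_U$ ``via \eqref{Eq: convexity misc 3} applied after subtracting off the known control-cost contribution,'' but \eqref{Eq: convexity misc 3} concerns $v^{f'}_{a,b}(a)$, not $(v^f_{a,b})'(a)$, and these are \emph{not} the same object in general. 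The identity $v'_{a,b}=v^{f'}_{a,b}$ requires both $\Gamma=0$ and $\gamma=0$, so invoking it to prove $\gamma=0$ is circular. (In fact $v'_{a,b}(a)=-C_U$ is automatic once $\Gamma(a,b)=0$, as the proof of Lemma~\ref{Lemma: smoothness remark} shows, so it carries no information about $\gamma$.)

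The fix is exactly what you sketch in your last paragraph, and it is what the paper does: substitute \eqref{rho_a_b_expression} directly into \eqref{Eq: value derivative} at $x=b$ to obtain
\[
v'_{a,b}(b) = \left(\frac{W^{(q)}(b-a)}{Z^{(q)}(b-a,\Phi_{q+r})} - \frac{\Phi_{q+r}}{r}\right)\Gamma(a,b) + \frac{\gamma(a,b)}{r} + C_D,
\]
from which $v'_{a,b}(b)=C_D$ is equivalent to $\gamma(a,b)=0$ once $\Gamma(a,b)=0$. This together with Lemma~\ref{Lemma: smoothness remark} gives $(ii)\Leftrightarrow(iii)$ cleanly, without any appeal to $v^{f'}_{a,b}$. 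So drop the circular step and just carry out the direct evaluation you anticipated.
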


\begin{proof}
Proof of (i) $\Longleftrightarrow$ (ii): First, by \eqref{Eq: convexity misc 3},  
\begin{align}
    v^{f'}_{a, b}(a) = -C_U \Longleftrightarrow \gamma(a, b) = 0. \label{Eq: equivalence 1}
\end{align}
By \eqref{Eq: gamma function}, we have
\begin{align}
\rho_{a, b}^{(q)}(b; \tilde{f}')
    &= \frac {\Phi_{q+r}  \Gamma(a, b) -  \gamma(a, b) } r - (C_U + C_D) . \label{rho_a_b_expression}
\end{align}
Substituting \eqref{rho_a_b_expression} into \eqref{Eq: convexity misc 2}, we have
\begin{align*}
    v^{f'}_{a, b}(b) &= -\frac{\Phi_{q+r}}{r} \Gamma(a, b) + \left(\frac{Z^{(q)}(b - a)}{qZ^{(q)}(b - a; \Phi_{q + r})} + \frac{1}{r}\right)\gamma(a, b) + C_D.
\end{align*}
The equivalence follows from the last equality and \eqref{Eq: equivalence 1}.

Proof of (ii) $\Longleftrightarrow$ (iii): Substituting \eqref{rho_a_b_expression} in  \eqref{Eq: value derivative} with $x = b$,
\begin{align*}
    v'_{a, b}(b) &= \left(\frac{W^{(q)}(b - a)}{Z^{(q)}(b - a, \Phi_{q + r})} - \frac{\Phi_{q+r}}{r}\right) \Gamma(a, b) + \frac{\gamma(a, b)}{r} + C_D.
\end{align*}
Thus, $v'_{a, b}(b) = C_D$ is equivalent to $\gamma(a, b) = 0$ when $\Gamma(a, b) = 0$. Finally, Lemma \ref{Lemma: smoothness remark} shows the equivalence.
\end{proof}

\subsection{Existence of candidate barriers}\label{Sect: existence of (a, b)}
We now establish the existence of candidate barriers $(a^*, b^*)$ satisfying 
\begin{align*}
    \mathfrak{C}: v^{f'}_{a^*, b^*}(a^*) = -C_U \quad \textrm{and} \quad v^{f'}_{a^*, b^*}(b^*) = C_D, 
\end{align*}
which is, by Proposition \ref{Prop: iff conditions}, equivalent to
\begin{align*}
    \mathfrak{C}': \Gamma(a^*,b^*) = \gamma(a^*, b^*) = 0.
\end{align*}

Recall $\bar{a} = \inf\{a \in \mathbb{R}: \tilde{f}'(a) \geq 0\}$ as defined in Assumption \ref{asm: on f}(2). We can immediately eliminate the half-line $[\bar{a}, \infty)$ from consideration for $a^*$. Indeed, for $a \geq \overline{a}$, we have $\tilde{f}'(y + a) \geq \tilde{f}'(y + \bar{a}) \geq 0$ for all $y \geq 0$ by Assumption \ref{asm: on f}(1). Thus, from \eqref{Eq: Gamma function}, we have $\Gamma(a, b) > 0$ for $b > a \geq \bar{a}$. Below, we show that, by decreasing the value of $a$ from $\bar{a}$, we reach $a^*$ such that the function $b \mapsto \Gamma(a^*, b)$ starts at $\Gamma(a^*, a^*+) > 0$ and becomes tangent to the x-axis at $b^*$, where its partial derivative $\gamma(a^*, b^*)$ is zero.

We first obtain the start and end values of the mapping $b \mapsto \Gamma(a, b)$.
\begin{lemma} \label{Lemma: F asymptotics} For all $a \in \mathbb{R}$,
\begin{align}
    \Gamma_1(a) &\coloneqq \Gamma(a, a+) = \int^\infty_0 e^{-\Phi_{q + r} y} \tilde{f}'(y+a) \, \diff y + \frac{r}{\Phi_{q+r}}(C_U + C_D), \label{Eq: F(a, a)} \\
    \Gamma_2(a) &\coloneqq \lim_{b \to \infty} \frac{\Gamma(a, b)}{Z^{(q)}(b - a, \Phi_{q + r})} = \int^{\infty}_0 e^{-\Phi_q y} \tilde{f}'(y + a)\, \diff y. \label{Eq: F(a, inf)}
\end{align}
\end{lemma}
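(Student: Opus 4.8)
The plan is to evaluate both limits directly from the definition $\Gamma(a,b) = \int_a^\infty \tilde f'(y)\, Z^{(q)}(b-y,\Phi_{q+r})\, \diff y + \tfrac{r}{\Phi_{q+r}}(C_U+C_D)$ in (1.14), by changing variables $y \mapsto y+a$ so that $\Gamma(a,b) = \int_0^\infty \tilde f'(y+a)\, Z^{(q)}(b-a-y,\Phi_{q+r})\, \diff y + \tfrac{r}{\Phi_{q+r}}(C_U+C_D)$, and then passing to the limit in the variable $t := b-a$. For $\Gamma_1(a)$ I set $t = b-a \downarrow 0$: since $Z^{(q)}(x,\Phi_{q+r}) = 0$ for... actually $W^{(q)}$ vanishes on the negative half-line, the representation $Z^{(q)}(x,\Phi_{q+r}) = e^{\Phi_{q+r}x}\bigl(1 - r\int_0^x e^{-\Phi_{q+r}z}W^{(q)}(z)\,\diff z\bigr)$ shows that for $x \le 0$ one simply has $Z^{(q)}(x,\Phi_{q+r}) = e^{\Phi_{q+r}x}$. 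Hence $Z^{(q)}(t-y,\Phi_{q+r}) \to e^{-\Phi_{q+r}y}$ as $t\downarrow 0$ for each $y>0$, giving the claimed formula for $\Gamma_1(a)$ once the interchange of limit and integral is justified.

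For $\Gamma_2(a)$ I use instead the second representation in (1.9), $Z^{(q)}(x,\Phi_{q+r}) = r\int_0^\infty e^{-\Phi_{q+r}z} W^{(q)}(z+x)\,\diff z$, valid for all $x\in\mathbb R$. Dividing by $Z^{(q)}(b-a,\Phi_{q+r})$ and using the known asymptotics $W^{(q)}(x) \sim e^{\Phi_q x}/\psi'(\Phi_q)$ as $x\to\infty$ (see \cite{kyprianou_fluctuations_2014}), one gets $Z^{(q)}(x,\Phi_{q+r}) \sim \bigl(\tfrac{r}{\psi'(\Phi_q)}\int_0^\infty e^{(\Phi_q-\Phi_{q+r})z}\diff z\bigr) e^{\Phi_q x}$, so the normalized kernel $Z^{(q)}(b-a-y,\Phi_{q+r})/Z^{(q)}(b-a,\Phi_{q+r}) \to e^{-\Phi_q y}$ as $b\to\infty$ for each fixed $y$. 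Then $\Gamma_2(a) = \lim_{b\to\infty}\int_0^\infty \tilde f'(y+a)\, \frac{Z^{(q)}(b-a-y,\Phi_{q+r})}{Z^{(q)}(b-a,\Phi_{q+r})}\,\diff y$, where I note the additive constant $\tfrac{r}{\Phi_{q+r}}(C_U+C_D)$ divided by $Z^{(q)}(b-a,\Phi_{q+r})$ vanishes since $Z^{(q)}$ grows exponentially. Passing the limit inside yields $\int_0^\infty e^{-\Phi_q y}\tilde f'(y+a)\,\diff y$.

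The main obstacle is justifying the interchange of limit and integral in both cases, since $\tilde f'$ is only assumed to be (at most polynomially growing and) convexity-driven monotone — in particular $\tilde f'(y+a)$ may tend to $+\infty$ as $y\to\infty$. For $\Gamma_1(a)$: I would split $\int_0^\infty = \int_0^K + \int_K^\infty$; on $[0,K]$ use continuity and boundedness of $Z^{(q)}(\cdot,\Phi_{q+r})$ on compacts; on $[K,\infty)$, for $t$ small, $Z^{(q)}(t-y,\Phi_{q+r}) = e^{\Phi_{q+r}(t-y)}$ once $t < y$, so the tail is uniformly dominated by $e^{\Phi_{q+r}t}e^{-\Phi_{q+r}y}|\tilde f'(y+a)|$ with $e^{\Phi_{q+r}t}$ bounded near $t=0$, and the polynomial growth of $\tilde f'$ makes this tail integrable and uniformly small — so dominated convergence applies. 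For $\Gamma_2(a)$: one needs a uniform (in $b$) integrable bound for $\tilde f'(y+a)Z^{(q)}(b-a-y,\Phi_{q+r})/Z^{(q)}(b-a,\Phi_{q+r})$; the cleanest route is to use the monotonicity of $x\mapsto e^{-\Phi_{q+r}x}Z^{(q)}(x,\Phi_{q+r})$ (which follows from \eqref{Eq: Z' phi}, since its derivative equals $-re^{-\Phi_{q+r}x}W^{(q)}(x)\le 0$), which gives $Z^{(q)}(b-a-y,\Phi_{q+r})/Z^{(q)}(b-a,\Phi_{q+r}) \le e^{-\Phi_{q+r}y}\cdot e^{\Phi_{q+r}(\ )}$... more simply, $e^{-\Phi_{q+r}(b-a-y)}Z^{(q)}(b-a-y,\Phi_{q+r}) \ge e^{-\Phi_{q+r}(b-a)}Z^{(q)}(b-a,\Phi_{q+r})$ is the wrong direction for an upper bound, so instead I would bound the ratio pointwise by a constant times $e^{-(\Phi_q-\epsilon)y}$ using the sharp two-sided scale-function asymptotics and the fact that $\Phi_{q+r}>\Phi_q$, with the polynomial growth of $\tilde f'$ absorbing the slack; then dominated convergence gives the result. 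I expect this tail estimate for $\Gamma_2$ to be the one genuine technical point, and I would relegate its details (or cite the analogous estimate used elsewhere in the paper, e.g. in the proof of Lemma \ref{Lemma: inventory cost}) to the appendix.
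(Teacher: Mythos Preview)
Your proposal is correct and follows essentially the same route as the paper: for $\Gamma_1(a)$ you substitute $b\downarrow a$ using $Z^{(q)}(x,\Phi_{q+r})=e^{\Phi_{q+r}x}$ for $x\le 0$, and for $\Gamma_2(a)$ you identify the pointwise limit of the normalized kernel as $e^{-\Phi_q y}$ via the asymptotics of $W^{(q)}$ and then invoke dominated convergence. The paper treats (i) even more tersely, simply asserting continuity of $b\mapsto\Gamma(a,b)$ and plugging in $b=a+$.

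The one place where the paper is noticeably cleaner is the dominating function for $\Gamma_2$, which you flag as the genuine technical point. Rather than using an $\epsilon$-slack bound $Ce^{-(\Phi_q-\epsilon)y}$ from sharp asymptotics, the paper exploits the fact that $W_{\Phi_q}(x):=e^{-\Phi_q x}W^{(q)}(x)$ is increasing with finite limit $W_{\Phi_q}(\infty)$. Writing $Z^{(q)}(x,\Phi_{q+r})=re^{\Phi_q x}\int_0^\infty e^{-(\Phi_{q+r}-\Phi_q)z}W_{\Phi_q}(x+z)\,\diff z$ gives the two-sided estimate
\[
\frac{re^{\Phi_q x}W_{\Phi_q}(x)}{\Phi_{q+r}-\Phi_q}\;\le\; Z^{(q)}(x,\Phi_{q+r})\;\le\;\frac{re^{\Phi_q x}W_{\Phi_q}(\infty)}{\Phi_{q+r}-\Phi_q},
\]
so that for any fixed $b'>a$ and all $b>b'$ one has the uniform bound
\[
\left|\tilde f'(z)\,\frac{Z^{(q)}(b-z,\Phi_{q+r})}{Z^{(q)}(b-a,\Phi_{q+r})}\right|\;\le\;|\tilde f'(z)|\,e^{-\Phi_q(z-a)}\,\frac{W_{\Phi_q}(\infty)}{W_{\Phi_q}(b'-a)},\qquad z\ge a,
\]
which is integrable by the polynomial growth of $\tilde f'$. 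This avoids the $\epsilon$ entirely and gives dominated convergence in one line; you may find it a cleaner replacement for your sketched tail estimate.
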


The proof of Lemma \ref{Lemma: F asymptotics} is deferred to Appendix \ref{Sect: proof of F asymptotics}. Notice that both $a\mapsto \Gamma_1(a)$ and $a\mapsto \Gamma_2(a)$ are continuous. Moreover, both are strictly increasing on $(-\infty, \bar{a})$ due to the convexity of $\tilde{f}$ and Assumption \ref{asm: on f}(3). Define their left inverses,
\begin{align}\label{Eq: a_underline}
\begin{split}
    \underline{a}_i \coloneqq \inf\{a \in \mathbb{R}: \Gamma_i(a) \geq 0\}, \quad i = 1,2.
\end{split}
\end{align}

Observe that 
\[
\underline{a}_1 \in [-\infty, \bar{a}) \quad \textrm{and} \quad \underline{a}_2 \in (-\infty, \bar{a}).
\]
To establish that $\underline{a}_2 > -\infty$, note that by monotone convergence and the convexity of $\tilde{f}$, we have $\Gamma_2(a) \xrightarrow{a \downarrow -\infty}\int^{\infty}_0 e^{-\Phi_q z} \tilde{f}'(-\infty)\, \diff z \in [-\infty, 0)$. Hence, $\underline{a}_2 > -\infty$. Next, to show that both $\underline{a}_1$ and $\underline{a}_2$ are upper bounded by $\bar{a}$, note that by \eqref{Eq: sum C} and Assumption \ref{asm: on f}(2), we have 
\[ \Gamma_1(\bar{a}) = \int^\infty_0 e^{-\Phi_{q + r} y} \tilde{f}'(y + \bar{a})\, \diff y + \frac{r}{\Phi_{q + r}}(C_U + C_D) > 0,\]
which implies $\bar{a} > \underline{a}_1$. Additionally, by Assumption \ref{asm: on f}(3), $\Gamma_2(\bar{a}) > 0$, which implies $\bar{a} > \underline{a}_2$.

\begin{remark} \label{Remark: a_2} The value $\underline{a}_2$ is the optimal barrier in the case with only upward control; see Section 7 of \cite{yamazaki_inventory_2017}. By the definition of $\underline{a}_2$ in \eqref{Eq: a_underline}, we have
    \begin{equation*}
        0 = \frac{\Phi_{q}}{q} \Gamma_2(\underline{a}_2) = \frac{\Phi_{q}}{q} \int^\infty_0 e^{-\Phi_{q}y} f'(y + \underline{a}_2)\, \diff y + C_U = \mathbb{E}_{\underline{a}_2}\left[\int^\infty_0e^{-qt} f'(Y^{\underline{a}_2, \infty}(t))\, \diff t\right] + C_U,
    \end{equation*}
    where the last equality holds by Theorem 2.8(iii) in \cite{kuznetsov_theory_2013}. Thus,
\begin{align}
    v^{f'}_{\underline{a}_2, \infty}(\underline{a}_2) \coloneqq \mathbb{E}_{\underline{a}_2}\left[\int^\infty_0 e^{-qt} f'(Y^{\underline{a}_2, \infty}(t))\, \diff t\right] = -C_U.  \label{v_f_prime_at_a_2}
\end{align}
\end{remark}

Next, we show two auxiliary lemmas. We define
\[
\underline{\Gamma}(a) \coloneqq \inf_{b > a} \Gamma(a, b), \quad a \in \mathbb{R}.
\] 
\begin{lemma} \label{Lemma: F min}
    We have (i) $\underline{\Gamma}(\bar{a}) > 0$ and (ii) $\underline{\Gamma}(\underline{a}_1 \vee \underline{a}_2) < 0$.
\end{lemma}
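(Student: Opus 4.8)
The plan is to handle (i) and (ii) separately, proving (ii) via a dichotomy on the sign of $\Gamma_1$ at $a_0 \coloneqq \underline{a}_1 \vee \underline{a}_2$. Claim (i) is almost immediate: taking $a=\bar{a}$ in \eqref{Eq: Gamma function}, Assumption \ref{asm: on f}(2) gives $\tilde{f}'(y)\geq 0$ for every $y>\bar{a}$, while $Z^{(q)}(\cdot,\Phi_{q+r})>0$ on all of $\mathbb{R}$ by the integral representation in \eqref{Eq: scale function Z} and the strict positivity of $W^{(q)}$ on $(0,\infty)$ (Remark \ref{remark_smoothness_scale_function}). Hence the integral in \eqref{Eq: Gamma function} is nonnegative for each $b>\bar{a}$, so $\Gamma(\bar{a},b)\geq \frac{r}{\Phi_{q+r}}(C_U+C_D)$, and taking the infimum over $b$ and invoking \eqref{Eq: sum C} yields $\underline{\Gamma}(\bar{a})\geq \frac{r}{\Phi_{q+r}}(C_U+C_D)>0$.

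For (ii), note $a_0\in(-\infty,\bar{a})$ and, since $a_0\geq\underline{a}_1$, $\Gamma_1(a_0)\geq 0$. First suppose $\Gamma_1(a_0)=0$ (equivalently $a_0=\underline{a}_1\geq\underline{a}_2$). Then $\Gamma(a_0,a_0+)=\Gamma_1(a_0)=0$, and since $\rho_{a_0,b}^{(q)}(b;\tilde{f}')\to 0$ as $b\downarrow a_0$, the second line of \eqref{Eq: gamma function} gives $\gamma(a_0,a_0+)=-r(C_U+C_D)<0$. As $b\mapsto\gamma(a_0,b)=\partial_b\Gamma(a_0,b)$ is continuous, it stays negative on some $(a_0,a_0+\varepsilon)$, so $\Gamma(a_0,b)=\Gamma(a_0,a_0+)+\int_{a_0}^b\gamma(a_0,s)\,\diff s<0$ there, whence $\underline{\Gamma}(a_0)<0$.

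The remaining (and harder) case is $\Gamma_1(a_0)>0$. Since $\Gamma_1$ is continuous, strictly increasing on $(-\infty,\bar{a})$, with left inverse $\underline{a}_1$, this forces $a_0=\underline{a}_2>\underline{a}_1$, so $\Gamma_2(a_0)=0$ and $v^{f'}_{a_0,\infty}(a_0)=-C_U$ by \eqref{v_f_prime_at_a_2}. The plan here is: (a) use the pathwise domination $Y^{a_0,b}(t)\leq Y^{a_0,\infty}(t)$ for all $t$ and $b>a_0$ (the periodic strategy $\pi_{a_0,b}$ differs from the continuously-reflected $\pi_{a_0,\infty}$ only by extra downward corrections---clear from the construction of these processes in \cite{noba_optimal_2018}) together with the convexity of $f$ (so $f'$ is non-decreasing) to conclude $v^{f'}_{a_0,b}(a_0)\leq v^{f'}_{a_0,\infty}(a_0)=-C_U$, hence $\gamma(a_0,b)=qZ^{(q)}(b-a_0;\Phi_{q+r})\big(v^{f'}_{a_0,b}(a_0)+C_U\big)\leq 0$ for every $b>a_0$ by \eqref{Eq: convexity misc 3}; and (b) feed this into the identity $v^{f'}_{a_0,b}(b)=-\frac{\Phi_{q+r}}{r}\Gamma(a_0,b)+\big(\frac{Z^{(q)}(b-a_0)}{qZ^{(q)}(b-a_0;\Phi_{q+r})}+\frac1r\big)\gamma(a_0,b)+C_D$ from the proof of Proposition \ref{Prop: iff conditions}, whose $\gamma$-coefficient is positive, to obtain $\Gamma(a_0,b)\leq\frac{r}{\Phi_{q+r}}\big(C_D-v^{f'}_{a_0,b}(b)\big)$. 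It then suffices to exhibit one $b$ with $v^{f'}_{a_0,b}(b)>C_D$, and for that I would combine the crude pathwise bound $Y^{a_0,b}(t)\geq b-\big(\sup_{s\leq t}X(s)-X(t)\big)$ under $\mathbb{P}_b$ (valid because between consecutive downward corrections $Y^{a_0,b}$ dominates the driving path restarted, at the last correction, from a level $\leq b$) with the monotonicity of $f'$ and Fatou's lemma: the integrand $f'(Y^{a_0,b}(t))$ is bounded below by $f'(a_0)$ (as $Y^{a_0,b}\geq a_0$) and tends, as $b\to\infty$, to $f'(\infty)$ a.s., so $\liminf_{b\to\infty}v^{f'}_{a_0,b}(b)\geq f'(\infty)/q>C_D$ by Remark \ref{Remark: polynomial growth}. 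Hence $v^{f'}_{a_0,b}(b)>C_D$, and so $\Gamma(a_0,b)<0$, for all large $b$, giving $\underline{\Gamma}(a_0)<0$.

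I expect this last case to be the main obstacle. There both endpoint behaviours of $b\mapsto\Gamma(a_0,b)$ are nonnegative---$\Gamma(a_0,a_0+)=\Gamma_1(a_0)>0$ and $\Gamma(a_0,b)/Z^{(q)}(b-a_0,\Phi_{q+r})\to\Gamma_2(a_0)=0$ by \eqref{Eq: F(a, inf)}---so a direct asymptotic expansion of $\Gamma(a_0,b)$ as $b\to\infty$ is delicate: the leading $e^{\Phi_q b}$-order terms cancel and the sign of the remainder is not transparent from scale-function asymptotics. The probabilistic route above avoids this altogether, but two points need care, namely the pathwise comparison $Y^{a_0,b}\leq Y^{a_0,\infty}$ (with the resulting lower bound on $Y^{a_0,b}$), and the Fatou interchange of limit and expectation, for which the a.e.\ bound $f'(Y^{a_0,b}(t))\geq f'(a_0)$ is precisely what is used.
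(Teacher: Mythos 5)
Your proof is correct, and it follows the same overall probabilistic strategy as the paper: both parts hinge on $\Gamma_1(a_0)=\Gamma(a_0,a_0+)$ and on the probabilistic identity \eqref{Eq: convexity misc 3}/\eqref{Eq: convexity misc 2}, with the argument branching on whether $a_0=\underline{a}_1\vee\underline{a}_2$ equals $\underline{a}_1$ or $\underline{a}_2$.

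There are, however, a few places where your organization is cleaner or more detailed than the paper's. In part (i), the paper argues that $b\mapsto\Gamma(\bar a,b)$ is increasing with $\Gamma(\bar a,\bar a+)>0$, whereas you bound $\Gamma(\bar a,b)$ below directly by $\frac{r}{\Phi_{q+r}}(C_U+C_D)$; both work with the same input. In part (ii), the paper's case $\underline{a}_2\geq\underline{a}_1$ first establishes the \emph{strict} inequality $v^{f'}_{\underline{a}_2,b}(\underline{a}_2)<v^{f'}_{\underline{a}_2,\infty}(\underline{a}_2)$ via a positive-Lebesgue-measure argument, whereas you observe that the non-strict $\gamma(a_0,b)\leq 0$ (which follows from the simpler monotone domination $Y^{a_0,b}\leq Y^{a_0,\infty}$) already suffices because the strictness is inherited from the other inequality $v^{f'}_{a_0,\tilde b}(\tilde b)>C_D$ — a genuine simplification. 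You also route the algebra through the displayed identity for $v^{f'}_{a,b}(b)$ in terms of $\Gamma$ and $\gamma$ (from the proof of Proposition \ref{Prop: iff conditions}) rather than the $\rho$-form used by the paper, which is a cosmetic difference. Finally, the paper only asserts, without detail, the existence of $\tilde b$ with $v^{f'}_{\underline{a}_2,\tilde b}(\tilde b)>C_D$; your Fatou argument with the pathwise lower bound $Y^{a_0,b}(t)\geq b-(\sup_{s\leq t}X(s)-X(t))$ under $\mathbb{P}_b$ (a comparison of the Poisson-time down-push with classical reflection from above at $b$) fills that gap explicitly. The comparison itself deserves one sentence of justification — e.g.\ an induction over Poisson times showing $Y^{a_0,b}(T_{i+1})=\min(Y^{a_0,b}(T_{i+1}-),b)\geq Z(T_{i+1})$ where $Z$ is the classically-reflected process — but it is a standard fact and your parenthetical hints at the right idea. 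Your case split on the sign of $\Gamma_1(a_0)$ places the boundary $\underline{a}_1=\underline{a}_2$ in the other branch than the paper does, which is immaterial since both cover it.
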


\begin{proof}
(i) By \eqref{Eq: F(a, a)} and the definition of $\overline{a}$, we have $\Gamma(\bar{a}, \bar{a}+) > 0$. From \eqref{Eq: Gamma function}, it is easy to see that $b \mapsto \Gamma(\bar{a}, b)$ is strictly increasing on $(\bar{a}, \infty)$. Hence, $\underline{\Gamma}(\bar{a}) > 0$. 

\noindent (ii) We show separately for the cases $\underline{a}_2 \geq \underline{a}_1$ and $\underline{a}_2 < \underline{a}_1$. 

(1) Suppose $\underline{a}_2 \geq \underline{a}_1$. By \eqref{Eq: convexity misc 3} with $a = \underline{a}_2$ and \eqref{v_f_prime_at_a_2}, we have, for any $b > \underline{a}_2$, 
    \begin{equation*}
        \frac{\gamma(\underline{a}_2, b)}{qZ^{(q)}(b - \underline{a}_2; \Phi_{q + r})} - C_U = \mathbb{E}_{\underline{a}_2}\left[\int^\infty_0e^{-qt} f'(Y^{\underline{a}_2, b}(t))\, \diff t\right] < \mathbb{E}_{\underline{a}_2}\left[\int^\infty_0e^{-qt} f'(Y^{\underline{a}_2, \infty}(t))\, \diff t\right] = -C_U, 
    \end{equation*}
    where the inequality holds because $Y^{\underline{a}_2, b}(t) \leq Y^{\underline{a}_2, \infty}(t)$ for all $t \geq 0$. To see why the inequality is strict, first note that $\underline{a}_2 < \bar{a} < \bar{\bar{a}}$ implies the existence of some $c > \underline{a}_2$ and $\varepsilon > 0$ such that $f'(c) < f'(c + \varepsilon)$. Moreover, since the set $\{t \in [0,\infty): Y^{\underline{a}_2, b}(t) < c, Y^{\underline{a}_2, \infty}(t) > c + \varepsilon\}$ has a positive Lebesgue measure with positive probability, the inequality is indeed strict. This inequality implies 
    \begin{align}\label{Eq: iff misc 2}
        0 > \gamma(\underline{a}_2, b) = \Phi_{q+r} \Gamma(\underline{a}_2, b) - r\left(C_U + C_D + \rho_{\underline{a}_2, b}^{(q)}(b; \tilde{f}')\right), \quad b > \underline{a}_2.
    \end{align}
    Now, since $f'$ is non-decreasing and $f'(\infty) > qC_D$ by Assumption \ref{asm: on f}(3), for some sufficiently large $\tilde{b}$, 
    \begin{equation*}
        C_D < \mathbb{E}_{\tilde{b}} \left[\int^\infty_0 e^{-qt} f'(Y^{\underline{a}_2, \tilde{b}}(t))\, \diff t\right] = v^{f'}_{\underline{a}_2, \tilde{b}}(\tilde{b}) = \frac{Z^{(q)}(\tilde{b} - \underline{a}_2)}{qZ^{(q)}(\tilde{b} - \underline{a}_2; \Phi_{q + r})}\gamma(\underline{a}_2, \tilde{b}) - \rho_{\underline{a}_2, \tilde{b}}^{(q)}(\tilde{b}; \tilde{f}') - C_U,
    \end{equation*}
    where we used \eqref{Eq: convexity misc 2} for the last equality. This, and \eqref{Eq: iff misc 2} with $b = \tilde{b}$, gives $\rho_{\underline{a}_2, \tilde{b}}^{(q)}(\tilde{b}; \tilde{f}') + C_U + C_D < 0$. Substituting this inequality back into \eqref{Eq: iff misc 2}, we have
    $\Gamma(\underline{a}_2, \tilde{b}) < 0$, and therefore $\underline{\Gamma}(\underline{a}_1 \vee \underline{a}_2)  = \underline{\Gamma}(\underline{a}_2) \leq \Gamma(\underline{a}_2, \tilde{b}) < 0$.\\
    (2) Suppose $\underline{a}_1 >  \underline{a}_2$. By \eqref{Eq: gamma function} and the definition of $\underline{a}_1$, we have $\gamma(\underline{a}_1, \underline{a}_1+) = - r(C_U + C_D) < 0$. Since $\gamma(\underline{a}_1, \cdot)$ is the partial derivative of $\Gamma(\underline{a}_1, \cdot)$, which starts at $\Gamma(\underline{a}_1,\underline{a}_1+) = 0$, the function $\Gamma(\underline{a}_1, \cdot)$ must go below zero. Thus, $0 > \underline{\Gamma}(\underline{a}_1) = \underline{\Gamma}(\underline{a}_1 \vee \underline{a}_2)$.
\end{proof}

\begin{lemma} \label{Lemma: monotonicity F} The mapping $a \mapsto \underline{\Gamma}(a)$ is continuous and strictly increasing on $(\underline{a}_1 \vee \underline{a}_2, \bar{a})$.
\end{lemma}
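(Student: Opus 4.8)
The plan is to prove both the continuity and the strict monotonicity of $a \mapsto \underline{\Gamma}(a) = \inf_{b>a}\Gamma(a,b)$ on the interval $(\underline{a}_1 \vee \underline{a}_2, \bar a)$, and the natural tool is to exploit the explicit form of $\Gamma(a,b)$ in \eqref{Eq: Gamma function} together with its partial derivative $\gamma(a,b)$ in \eqref{Eq: gamma function}. First I would argue that for $a$ in the stated interval the infimum over $b>a$ is in fact \emph{attained} at some finite interior point $b(a)$: by Lemma \ref{Lemma: F asymptotics}, as $b\downarrow a$ we have $\Gamma(a,b)\to\Gamma_1(a)>0$ (since $a>\underline{a}_1$), and as $b\to\infty$ we have $\Gamma(a,b)/Z^{(q)}(b-a,\Phi_{q+r})\to\Gamma_2(a)>0$ (since $a>\underline{a}_2$), so $\Gamma(a,\cdot)$ is eventually positive at both ends while $\underline{\Gamma}(a)<0$ (by monotonicity of $\underline{\Gamma}$ between the two endpoints and Lemma \ref{Lemma: F min}, or directly: the infimum being negative forces an interior minimizer). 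Hence a finite minimizer $b(a)\in(a,\infty)$ exists, and at any such point $\gamma(a,b(a)) = \partial_b\Gamma(a,b(a)) = 0$.

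For the \textbf{monotonicity}, the key is to compute $\partial_a \Gamma(a,b)$ and evaluate it at a minimizer. Differentiating \eqref{Eq: Gamma function} in $a$ gives $\partial_a\Gamma(a,b) = -\tilde f'(a)Z^{(q)}(b-a,\Phi_{q+r}) - \int_a^\infty \tilde f'(y)\,\partial_a Z^{(q)}(b-y,\Phi_{q+r})\,\diff y$; since $Z^{(q)}(b-y,\Phi_{q+r})$ depends on $b-y$, the $\partial_a$ under the integral (coming only through the lower limit — wait, it does not, the integrand has no $a$) — so in fact $\partial_a\Gamma(a,b) = -\tilde f'(a)\,Z^{(q)}(b-a,\Phi_{q+r})$. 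Because $a<\bar a$ means $\tilde f'(a)<0$ by Assumption \ref{asm: on f}(2), and $Z^{(q)}(\cdot,\Phi_{q+r})>0$, we get $\partial_a\Gamma(a,b) > 0$ for every fixed $b>a$. Now for $a_1 < a_2$ in the interval, pick a minimizer $b_2 = b(a_2)$; then $\underline{\Gamma}(a_1) \le \Gamma(a_1,b_2) < \Gamma(a_2,b_2) = \underline{\Gamma}(a_2)$, where the strict inequality uses $\partial_a\Gamma>0$ and the fact that $a\mapsto\Gamma(a,b_2)$ is strictly increasing on the relevant range (one must check $b_2 > a_1$, which holds since $b_2 > a_2 > a_1$, and that $(a_1,a_2)\times\{b_2\}$ stays in the region $a<\bar a$). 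This yields strict monotonicity.

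For \textbf{continuity}, I would use the standard fact that an infimum of a jointly continuous function that is attained on a locally uniform compact range is continuous; concretely, $\underline{\Gamma}(a) = \min_{b\in[a+\epsilon, B]}\Gamma(a,b)$ for suitable $\epsilon>0$ and $B<\infty$ chosen uniformly over a small neighborhood of any fixed $a_0$ in the interval (using the two-sided positivity at the ends of $b$-range from Lemma \ref{Lemma: F asymptotics}, which is locally uniform in $a$ by continuity of $\Gamma_1,\Gamma_2$ and of the relevant scale-function expressions). Since $(a,b)\mapsto\Gamma(a,b)$ is jointly continuous — inspect \eqref{Eq: Gamma function}, an integral of continuous functions against the continuous scale function $Z^{(q)}$ — a standard compactness/Berge-type argument gives continuity of $a\mapsto\underline{\Gamma}(a)$. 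Alternatively, continuity follows once monotonicity is known, together with a Darboux (intermediate value) argument using Lemma \ref{Lemma: F min}: a monotone function failing to be continuous would skip values, contradicting that $\underline{\Gamma}$ ranges continuously; but the cleanest route is the compactness argument, so I would present that.

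The \textbf{main obstacle} I anticipate is the bookkeeping needed to make the "attained minimizer" and "locally uniform compact $b$-range" claims rigorous: one must rule out the infimum escaping to $b\to a+$ or $b\to\infty$, and the escape at infinity is subtle because $\Gamma(a,b)$ itself may grow (it is $Z^{(q)}(b-a,\Phi_{q+r})$ times something tending to $\Gamma_2(a)>0$, so it actually tends to $+\infty$, which is fine, but this needs the asymptotics of Lemma \ref{Lemma: F asymptotics} stated carefully). Once the minimizer is safely interior, the envelope-type argument $\underline{\Gamma}(a_1)\le\Gamma(a_1,b(a_2))<\Gamma(a_2,b(a_2))=\underline{\Gamma}(a_2)$ is immediate from $\partial_a\Gamma>0$, so the monotonicity half is short; the continuity half is then routine.
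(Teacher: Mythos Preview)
Your strategy coincides with the paper's: both compute $\partial_a\Gamma(a,b)=-\tilde f'(a)\,Z^{(q)}(b-a,\Phi_{q+r})>0$ for $a<\bar a$, and both reduce continuity to the fact that $\Gamma(a,b)\to+\infty$ as $b\to\infty$ whenever $a>\underline a_2$ (so the infimum is effectively over a compact $b$-range and the Berge-type argument you sketch applies). The only substantive difference is how strict monotonicity is extracted from $\partial_a\Gamma>0$.

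Your envelope step $\underline\Gamma(a_1)\le\Gamma(a_1,b(a_2))<\Gamma(a_2,b(a_2))=\underline\Gamma(a_2)$ requires the minimizer $b(a_2)$ to exist, and your justification for that is circular: you invoke ``monotonicity of $\underline\Gamma$'' (what is being proved) to claim $\underline\Gamma(a)<0$ and hence an interior minimum. In fact $\gamma(a,a+)=\Phi_{q+r}\Gamma_1(a)-r(C_U+C_D)$ need not be negative for $a$ near $\bar a$, so $\Gamma(a,\cdot)$ may be increasing from the left endpoint and the infimum may equal the boundary value $\Gamma_1(a)$ without being attained; then the envelope inequality loses its strict middle step when you pass to an approximate minimizer. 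The paper sidesteps this by bounding the gap uniformly in $b$: for $a'<a\le\bar a$ and $b>a$,
\[
\Gamma(a,b)-\Gamma(a',b)=\int_{a'}^{a}\bigl(-\tilde f'(y)\bigr)Z^{(q)}(b-y,\Phi_{q+r})\,\diff y
\ \ge\ Z^{(q)}\!\Bigl(\tfrac{a-a'}{2},\Phi_{q+r}\Bigr)\int_{a'}^{(a+a')/2}\bigl(-\tilde f'(y)\bigr)\,\diff y \ =:\ \delta>0,
\]
using that $Z^{(q)}(\cdot,\Phi_{q+r})$ is increasing. Taking $\inf_{b>a}$ then gives $\underline\Gamma(a')\le\underline\Gamma(a)-\delta<\underline\Gamma(a)$ with no minimizer needed. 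Your argument is easily repaired by inserting exactly this uniform lower bound (work with an $\varepsilon$-approximate minimizer and subtract $\delta$), but as written it has a small but genuine logical gap.
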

\begin{proof}
By taking the partial of $\Gamma$ with respect to $a$, we obtain, for a.e.\ $a$ at which $\tilde{f}'(a)$ exists,
\begin{equation*}
    \frac{\partial }{\partial a}\Gamma(a, b) = - \tilde{f}'(a)Z^{(q)}(b - a, \Phi_{q + r}), \quad a < b, 
\end{equation*}
which is positive for $a < \bar{a}$. 
For $a' < a \leq \bar{a}$, notice that
\begin{align*}
    \underline{\Gamma}(a') &\leq \inf_{b > a} \Gamma(a', b) = \inf_{b > a} \left(\Gamma(a, b) + \int^a_{a'} \tilde{f}'(y) Z^{(q)}(b - y, \Phi_{q + r})\, \diff y\right)\\
    &\leq \inf_{b > a} \left(\Gamma(a, b) + \int^{(a + a')/2}_{a'} \tilde{f}'(y) Z^{(q)}(b - y, \Phi_{q + r})\, \diff y\right)\\
    &\leq \inf_{b > a} \left(\Gamma(a, b) + Z^{(q)}\left(b - \frac{a + a'}{2}, \Phi_{q + r}\right)\int^{(a + a')/2}_{a'} \tilde{f}'(y)\, \diff y\right)\\
    &\leq \underline{\Gamma}(a) + Z^{(q)}\left(\frac{a - a'}{2}, \Phi_{q + r}\right)\int^{(a + a')/2}_{a'} \tilde{f}'(y)\, \diff y < \underline{\Gamma}(a).
\end{align*}
Hence, $a \mapsto \underline{\Gamma}(a)$ is strictly increasing on $(-\infty, \bar{a})$. Continuity is guaranteed because $\Gamma(a, b) \xrightarrow{b \uparrow \infty} \infty$ for all $a > \underline{a}_2$, by \eqref{Eq: F(a, inf)} and the definition of $\underline{a}_2$ as in \eqref{Eq: a_underline}.
\end{proof}

We are now ready to show the existence and uniqueness of the candidate barriers.
\begin{proposition}\label{Prop: existence prop}
    There exists a unique pair $(a^*, b^*)$ such that $\mathfrak{C}$, or equivalently, $\mathfrak{C}'$, holds.
\end{proposition}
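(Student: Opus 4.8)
\emph{Existence of $(a^*, b^*)$.} The plan is to run the argument through $\underline{\Gamma}(a) = \inf_{b > a}\Gamma(a,b)$. By Lemma~\ref{Lemma: monotonicity F}, $\underline{\Gamma}$ is continuous and strictly increasing on $(\underline{a}_1 \vee \underline{a}_2, \bar{a})$, while Lemma~\ref{Lemma: F min} shows it changes sign there (negative at the left endpoint, positive at the right). The intermediate value theorem thus yields a unique $a^* \in (\underline{a}_1 \vee \underline{a}_2, \bar{a})$ with $\underline{\Gamma}(a^*) = 0$. Fixing this $a^*$: since $a^* > \underline{a}_1$ we have $\Gamma(a^*, a^*+) = \Gamma_1(a^*) > 0$, and since $a^* > \underline{a}_2$ we have $\Gamma_2(a^*) > 0$, so $\Gamma(a^*, b) \to +\infty$ as $b \to \infty$ by Lemma~\ref{Lemma: F asymptotics}. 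Since $\underline{\Gamma}(a^*) = 0$ is strictly below both boundary values and $b \mapsto \Gamma(a^*, b)$ is continuous on $(a^*, \infty)$, the infimum is attained at an interior point $b^* \in (a^*, \infty)$; the first-order condition there gives $\gamma(a^*, b^*) = \partial_b \Gamma(a^*, b^*) = 0$, and $\Gamma(a^*, b^*) = \underline{\Gamma}(a^*) = 0$. Hence $\mathfrak{C}'$, equivalently (by Proposition~\ref{Prop: iff conditions}) $\mathfrak{C}$, holds at $(a^*, b^*)$.

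\emph{Uniqueness.} Suppose $(a, b)$ satisfies $\mathfrak{C}'$. The key observation is that for \emph{fixed} $a$ the map $b \mapsto \gamma(a, b)$ has at most one zero: by \eqref{Eq: convexity misc 3}, $\gamma(a, b) = 0$ is equivalent to $v^{f'}_{a, b}(a) = -C_U$, and $b \mapsto v^{f'}_{a, b}(a) = \mathbb{E}_a\big[\int_0^\infty e^{-qt} f'(Y^{a, b}(t))\,\diff t\big]$ is strictly increasing --- because $Y^{a, b}$ is pathwise non-decreasing in $b$, $f'$ is non-decreasing by convexity, and, arguing as for the strict inequality in the proof of Lemma~\ref{Lemma: F min} and using $\bar{a} < \bar{\bar{a}}$, the two coupled controlled processes differ on a set of positive Lebesgue measure with positive probability on which $f'$ is strictly increasing. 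Consequently $b$ is the unique critical point of $\Gamma(a, \cdot)$. One next rules out the degenerate ranges of $a$. First, $a < \bar{a}$, since otherwise $\Gamma(a, \cdot) > 0$ on $(a, \infty)$, contradicting $\Gamma(a, b) = 0$. Second, $a > \underline{a}_2$: otherwise a pathwise comparison in the barriers together with \eqref{v_f_prime_at_a_2} gives $v^{f'}_{a, b}(a) \le v^{f'}_{a, \infty}(a) \le v^{f'}_{\underline{a}_2, \infty}(\underline{a}_2) = -C_U$ with a strict inequality somewhere, contradicting $\gamma(a, b) = 0$. Third, $a > \underline{a}_1$: otherwise $\Gamma_1(a) \le 0$, so by \eqref{Eq: gamma function} and \eqref{Eq: F(a, a)}, $\gamma(a, a+) = \Phi_{q+r}\Gamma_1(a) - r(C_U + C_D) < 0$; combined with $\Gamma(a, b) \to +\infty$ and the uniqueness of the critical point, $b$ must be a minimum with $\Gamma(a, b) \le \Gamma_1(a) \le 0$, in fact $< 0$ since $\Gamma(a, \cdot)$ starts strictly decreasing, contradicting $\Gamma(a, b) = 0$. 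Therefore $a \in (\underline{a}_1 \vee \underline{a}_2, \bar{a})$ with $\Gamma(a, a+) > 0$ and $\Gamma(a, b) \to +\infty$, so its unique critical point $b$ is the global minimizer and $\underline{\Gamma}(a) = \Gamma(a, b) = 0$; injectivity of $\underline{\Gamma}$ on $(\underline{a}_1 \vee \underline{a}_2, \bar{a})$ (Lemma~\ref{Lemma: monotonicity F}) forces $a = a^*$, and then $b$, being the unique minimizer of $\Gamma(a^*, \cdot)$, equals $b^*$.

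\emph{Main obstacle.} The intermediate value step is routine given Lemmas~\ref{Lemma: F min} and~\ref{Lemma: monotonicity F}. The delicate ingredients are the \emph{strict} monotonicity of $b \mapsto v^{f'}_{a, b}(a)$ and the exclusion of $a \le \underline{a}_1$ and $a \le \underline{a}_2$ in the uniqueness argument; both rest on pathwise comparison of the controlled processes $Y^{a, b}$ and on the non-degeneracy coming from $\bar{a} < \bar{\bar{a}}$, precisely the device already used in the proof of Lemma~\ref{Lemma: F min}. A minor additional point is ensuring $\underline{\Gamma}$ is continuous up to the endpoints of the interval so that the intermediate value theorem applies there.
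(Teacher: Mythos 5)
Your existence argument is the paper's: run the intermediate value theorem on $\underline{\Gamma}$ using Lemmas~\ref{Lemma: F min} and~\ref{Lemma: monotonicity F} to locate $a^*$, then note that $\Gamma(a^*,a^*+)>0$ and $\Gamma(a^*,b)\to\infty$ force the infimum to be attained at an interior $b^*$, where the first-order condition yields $\gamma(a^*,b^*)=0$.

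For uniqueness, you take a slightly more elaborate route than the paper. The paper argues uniqueness of $b^*$ \emph{given} $a = a^*$ via the strict monotonicity in $b$ of the probabilistic map $b \mapsto v^{f'}_{a^*,b}(b)$, and treats the uniqueness of $a^*$ as settled because $a^*$ is the unique zero of $\underline{\Gamma}$. What the paper does not spell out is why an arbitrary pair $(a,b)$ satisfying $\mathfrak{C}'$ must have $\underline{\Gamma}(a)=0$ — a priori $b$ could be a non-global critical point. You plug exactly this gap: by working with the other diagonal evaluation $b \mapsto v^{f'}_{a,b}(a)$ (strictly increasing by the same pathwise coupling argument used in Lemma~\ref{Lemma: F min}), you show $\gamma(a,\cdot)$ has at most one zero, hence that zero is the global minimizer, hence $\underline{\Gamma}(a)=\Gamma(a,b)=0$, and then injectivity of $\underline{\Gamma}$ forces $a = a^*$. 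Your explicit exclusion of $a\ge\bar a$, $a\le\underline{a}_2$, $a\le\underline{a}_1$ is a clean way to package the boundary cases. One organisational caveat: the strict monotonicity of $b\mapsto v^{f'}_{a,b}(a)$ needs $f'$ to be non-constant somewhere above $a$, which you justify via $\bar{a}<\bar{\bar{a}}$ — that justification really requires $a<\bar{\bar{a}}$ (in particular $a<\bar a$), which you only establish afterwards in the degenerate-range step. Reordering so the bound $a<\bar a$ comes first would tighten the logic, but this is cosmetic rather than a substantive error.
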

\begin{proof}
By Lemmas \ref{Lemma: F min} and \ref{Lemma: monotonicity F}, there exists a unique root  $a^* \in (\underline{a}_1 \vee \underline{a}_2, \infty)$ such that $\underline{\Gamma}(a^*) = 0$. Moreover, because $\Gamma(a^*, a^*+) > 0$ and $\lim_{b \to \infty}\Gamma(a^*,b) = \infty$ in view of the definitions of $\underline{a}_1$ and $\underline{a}_2$, the minimum of $b \mapsto \Gamma(a^*,b)$ is attained at some $b^* \in (a^*, \infty)$. By the continuity of $b \mapsto \gamma(a^*,b)$, it must hold that $\gamma(a^*,b^*)=0$. Thus, $\mathfrak{C}'$, or equivalently, $\mathfrak{C}$, holds. Moreover, such a $b^*$ is unique. To establish uniqueness, recall that $b^*$ must satisfy $v^{f'}_{a^*, b^*}(b^*) = C_D$, as required by $\mathfrak{C}$. Given the probabilistic expression $v^{f'}_{a^*, b}(b) = \mathbb{E}_b \left[\int^\infty_0e^{-qt} f'(Y^{a^*, b}(t))\, \diff  t\right]$, it is clear that the mapping $b \mapsto v^{f'}_{a^*, b}(b)$ is non-decreasing on $(a^*, \infty)$. Further, by applying the argument used in the proof of Lemma \ref{Lemma: F min}, we see that $b \mapsto v^{f'}_{a^*, b}(b)$ is, in fact, strictly increasing. Hence, there is exactly one $b^* > a^*$ such that $\mathfrak{C}'$ (equivalently $\mathfrak{C}$) holds.
\end{proof}

\section{Verification}
In the remainder of this paper, we denote by $(a^*, b^*)$ the pair of barriers satisfying $\mathfrak{C}$ (equivalently $\mathfrak{C}'$), whose existence and uniqueness are shown in Proposition \ref{Prop: existence prop}. In this section, we prove the optimality of the double-barrier strategy $\pi_{a^*, b^*}$. To this end, we apply the conventional verification technique using It\^o's lemma, which has been employed in previous works such as \cite{avram_exit_2004, baurdoux_optimality_2015, perez_optimal_2020}, among others. 

Let $\mathcal{L}$ be the infinitesimal generator associated with the spectrally negative L\'evy process $X$ applied to a sufficiently smooth function $h: \mathbb{R} \to \mathbb{R}$, 
\begin{equation*}
    \mathcal{L}h(x) \coloneqq \gamma h'(x) + \frac{\sigma^2}{2}h''(x) + \int_{(-\infty, 0)} \left[h(x + z) - h(x) - h'(x) z 1_{\{-1 < z < 0\}} \,\mu(\diff z)\right], \quad x\in \mathbb{R}. 
\end{equation*}
Also, we define the operator $\mathcal{M}$ for a measurable function $h$, 
\begin{equation*}
    \mathcal{M}h(x) \coloneqq \inf_{l \geq 0} \{C_Dl + h(x - l)\},
\end{equation*}
which has been used to prove the optimality of strategies in stochastic control problems in works such as \cite{mata_bailout_2023, perez_optimal_2020}. The following lemma gives the conditions that are sufficient for the optimality of $\pi_{a^*, b^*}$; its proof is deferred to Appendix \ref{Sect: proof of verification lemma}.
\begin{lemma}[verification lemma]\label{Lemma: verification lemma}
Suppose $w: \mathbb{R} \to \mathbb{R}$ is the NPV of costs under an admissible strategy. If it is sufficiently smooth on $\mathbb{R}$, has polynomial growth, satisfies $w' \geq -C_U$, 
    \begin{equation*}
        (\mathcal{L}-q)w(x) + r(\mathcal{M}w(x) - w(x)) + f(x) \geq 0, \quad x\in \mathbb{R}, 
    \end{equation*}
    and $\limsup_{t, n \uparrow \infty} \mathbb{E}_x\left[e^{-q(t\wedge \tau_n)} w(Y^\pi(t\wedge \tau_n))\right] \leq 0$ for all admissible strategies $\pi \in \Pi$, where $\tau_n \coloneqq \inf\{t \geq 0: |Y^{\pi}(t)| > n\}$, then $w(x) = v(x) = \inf_{\pi \in \Pi} v^\pi(x)$.
\end{lemma}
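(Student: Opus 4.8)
The plan is to prove the two bounds $w \geq v$ and $w \leq v$; together they give $w=v$. The first is immediate: by hypothesis $w = v^{\pi_0}$ for some admissible $\pi_0 \in \Pi$, so $w = v^{\pi_0} \geq \inf_{\pi \in \Pi} v^\pi = v$. The substance of the argument is the reverse inequality $w(x) \leq v^\pi(x)$ for every $\pi \in \Pi$ and every $x \in \mathbb{R}$, which I would obtain by an It\^o-type expansion along an arbitrary admissible strategy, combined with the variational inequality and the transversality condition.

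Fix $\pi \in \Pi$ with controlled process $Y^\pi = X + R^\pi - L^\pi$, fix $x$, and work on the localized interval $[0, t \wedge \tau_n]$. Since $|Y^\pi(s-)| \leq n$ on $[0, \tau_n]$, the functions $w$, $w'$, and $\mathcal{M}w$ are bounded on the relevant range. I would apply the change-of-variables formula to $s \mapsto e^{-qs} w(Y^\pi(s))$: this is justified because $w$ is sufficiently smooth in the sense matching the path variation of $X$ ($C^1$ suffices when $X$ is of bounded variation, $C^2$ is available otherwise). Expressing the jumps of $X$ through its compensated Poisson random measure and collecting the resulting drift into the generator, this yields
\begin{align*}
e^{-q(t\wedge\tau_n)} w(Y^\pi(t\wedge\tau_n)) = {}& w(x) + \int_0^{t\wedge\tau_n} e^{-qs} (\mathcal{L}-q) w(Y^\pi(s-))\, \diff s + M_{t\wedge\tau_n} \\
& + \int_{[0, t\wedge\tau_n]} e^{-qs} w'(Y^\pi(s-))\, \diff R^{\pi,c}(s) \\
& + \sum_{0 < s \leq t\wedge\tau_n} e^{-qs} \big( w(Y^\pi(s)) - w(Y^\pi(s-)) \big) 1_{\{\Delta R^\pi(s) \vee \Delta L^\pi(s) > 0\}},
\end{align*}
where $R^{\pi,c}$ is the continuous part of $R^\pi$ and $M$, the stochastic integral against the Brownian part and the compensated jump measure of $X$, is a true martingale on the bounded interval because its integrands are bounded on $[0,\tau_n]$ (using also Assumption \ref{asm: on X} to control the large jumps); hence $\mathbb{E}_x[M_{t\wedge\tau_n}] = 0$.

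Next I would bound the control terms. For the upward control, $w' \geq -C_U$ gives $w'(Y^\pi(s-))\, \diff R^{\pi,c}(s) \geq -C_U\, \diff R^{\pi,c}(s)$, and for an upward jump of size $\Delta$ we have $w(y+\Delta) - w(y) = \int_y^{y+\Delta} w'(u)\, \diff u \geq -C_U \Delta$; together these make the total $R^\pi$-contribution at least $-C_U \int_{[0, t\wedge\tau_n]} e^{-qs}\, \diff R^\pi(s)$. For the downward control, which jumps only at the Poisson times $T(i)$ by $\nu^\pi(T(i))$, the definition of $\mathcal{M}$ yields $w(Y^\pi(T(i)-) - \nu^\pi(T(i))) \geq \mathcal{M}w(Y^\pi(T(i)-)) - C_D \nu^\pi(T(i))$ (if $R^\pi$ also jumps at $T(i)$, split the increment and apply the two bounds successively). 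Taking $\mathbb{E}_x$ and using that $N^r$ has $\mathbb{F}$-compensator $r\, \diff s$ while $s \mapsto \mathcal{M}w(Y^\pi(s-)) - w(Y^\pi(s-))$ is predictable and bounded on $[0, \tau_n]$, the expectation of the Poisson sum of $\mathcal{M}w(Y^\pi(T(i)-)) - w(Y^\pi(T(i)-))$ equals $r\, \mathbb{E}_x\big[\int_0^{t\wedge\tau_n} e^{-qs} (\mathcal{M}w(Y^\pi(s-)) - w(Y^\pi(s-)))\, \diff s\big]$. Combining these with the variational inequality in the form $(\mathcal{L}-q)w(y) + r(\mathcal{M}w(y) - w(y)) \geq -f(y)$ and with $Y^\pi(s-) = Y^\pi(s)$ for Lebesgue-a.e.\ $s$, one reaches
\begin{align*}
w(x) \leq {}& \mathbb{E}_x \Big[ \int_0^{t\wedge\tau_n} e^{-qs} f(Y^\pi(s))\, \diff s + \int_{[0, t\wedge\tau_n]} e^{-qs} \big( C_U\, \diff R^\pi(s) + C_D\, \diff L^\pi(s) \big) \Big] \\
& + \mathbb{E}_x \big[ e^{-q(t\wedge\tau_n)} w(Y^\pi(t\wedge\tau_n)) \big].
\end{align*}
Letting $n \uparrow \infty$ and $t \uparrow \infty$, the boundary term has $\limsup \leq 0$ by the transversality hypothesis, while the admissibility of $\pi$ (finiteness of $\mathbb{E}_x[\int_0^\infty e^{-qs}|f(Y^\pi(s))|\, \diff s]$ and of $\mathbb{E}_x[\int_{[0,\infty)} e^{-qs}(\diff R^\pi(s) + \diff L^\pi(s))]$) lets dominated convergence handle the running cost and monotone convergence handle the non-decreasing processes $R^\pi, L^\pi$ (for any signs of $C_U$ and $C_D$); the right-hand side thus converges to $v^\pi(x)$. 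Therefore $w(x) \leq v^\pi(x)$ for all $\pi \in \Pi$, so $w \leq v$, and combined with $w \geq v$ we conclude $w(x) = v(x)$ for all $x \in \mathbb{R}$.

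I expect the main obstacle to be the It\^o bookkeeping: organizing the expansion so that the jumps of $X$ feed exactly into the generator $\mathcal{L}$ (which, in the unbounded-variation case, must be done through the compensated measure rather than by a naive split, as the $\nu$-integral of $w(\cdot+z)-w(\cdot)$ need not converge near the origin), so that the jumps of $R^\pi$ are absorbed by $w' \geq -C_U$, and so that the jumps of $L^\pi$ at Poisson times are absorbed by $\mathcal{M}$ together with the Poisson compensation producing the $r(\mathcal{M}w - w)$ term — and, in tandem, verifying that every localized stochastic integral is a genuine martingale up to $t \wedge \tau_n$, which rests on the local boundedness of $w$, $w'$, $\mathcal{M}w$ and on Assumption \ref{asm: on X} for the large jumps of $X$. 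Handling the possibly negative signs of $C_U$ and $C_D$ in the final limit requires only mild care and does not affect the conclusion.
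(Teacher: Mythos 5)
Your proposal is correct and follows essentially the same route as the paper's proof: apply It\^o's formula to $e^{-qs}w(Y^\pi(s))$ up to $t\wedge\tau_n$, bound the upward control contribution using $w'\ge -C_U$ (with the mean-value trick for jumps of $R^\pi$), absorb the downward control at Poisson times through $\mathcal{M}w$ together with the $r\,\diff s$ compensator to produce the $r(\mathcal{M}w-w)$ term, apply the variational inequality, take expectations, and pass to the limit using the transversality hypothesis and admissibility. The only visible differences are presentational: you make the trivial direction $w\ge v$ explicit (the paper leaves it implicit, as $w$ is by hypothesis the NPV under some admissible $\pi_0$), you spell out the compensated-jump-measure organization of $\mathcal{L}$ that the paper outsources to a reference, and you split the terminal limit into dominated convergence for $f$ and monotone convergence for $R^\pi,L^\pi$ separately — a slightly more careful handling of possibly negative $C_U,C_D$ than the paper's one-line appeal to dominated convergence, though both are valid under the admissibility integrability conditions.
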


In the remainder of this section, we show that the conditions in Lemma \ref{Lemma: verification lemma} are satisfied for the function $v_{a^*, b^*}$. First, by Lemma \ref{Lemma: smoothness remark}, $v_{a^*, b^*}$ is sufficiently smooth. Furthermore, by Assumption \ref{asm: on X} in conjunction with the linearity of $v_{a^*, b^*}$ below $a^*$, the integral component of $\mathcal{L}v_{a^*, b^*}$ is finite on $\mathbb{R}$. Thus, $\mathcal{L}v_{a^*, b^*}$ makes sense everywhere on $\mathbb{R}$. 

Next, we establish the related properties for the computation of $\mathcal{M} v_{a^*,b^*}$. Notice that, with the condition $\mathfrak{C}'$, the function $v_{a^*, b^*}$ is simplified. 
By $\mathfrak{C}'$ and \eqref{Eq: value derivative}, we have
\begin{equation} \label{v_star_derivative}
    v'_{a^*, b^*}(x) = - \rho_{a^*, b^*}^{(q, r)}(x; \tilde{f}') - \int^{x}_{b^*} W^{(q + r)}(x - y) \tilde{f}'(y)\, \diff y  - r(C_U + C_D) \overline{W}^{(q + r)}(x - b^*) - C_U.
\end{equation}

\begin{lemma}\label{Lemma: convexity}
    We have
\begin{enumerate}
    \item[(1)]  $v^{f'}_{a^*, b^*}(x) = v'_{a^*, b^*}(x)$ for all $x \in \mathbb{R}$, 
    \item[(2)] $x \mapsto v_{a^*, b^*}(x)$ is convex on $\mathbb{R}$,
    \item[(3)] $v'_{a^*, b^*}(x) \geq -C_U$ for all $x \in \mathbb{R}$.
\end{enumerate}
\end{lemma}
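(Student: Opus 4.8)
I would establish the three items in order, since (2) uses (1) and (3) uses (2); the engine throughout is that the candidate conditions $\mathfrak{C}'$, i.e.\ $\Gamma(a^*,b^*)=\gamma(a^*,b^*)=0$, collapse the formulas. For part (1) the plan is purely computational: substituting $\Gamma(a^*,b^*)=\gamma(a^*,b^*)=0$ into \eqref{Eq: value derivative} yields \eqref{v_star_derivative}, while substituting $\gamma(a^*,b^*)=0$ into \eqref{Eq: convexity misc 1} leaves
\[
 v^{f'}_{a^*, b^*}(x) = -\,\rho_{a^*, b^*}^{(q, r)}(x; \tilde{f}') - \int^{x}_{b^*} W^{(q + r)}(x - y)\,\tilde{f}'(y)\,\diff y + r\,\overline{W}^{(q + r)}(x - b^*)\,\rho_{a^*, b^*}^{(q)}(b^*; \tilde{f}') - C_U .
\]
Then \eqref{rho_a_b_expression} with $\Gamma(a^*,b^*)=\gamma(a^*,b^*)=0$ gives $\rho_{a^*, b^*}^{(q)}(b^*; \tilde{f}') = -(C_U+C_D)$, so the last display becomes exactly the right-hand side of \eqref{v_star_derivative}; hence $v^{f'}_{a^*,b^*}\equiv v'_{a^*,b^*}$ on $\mathbb{R}$.

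For part (2), note first that $\Gamma(a^*,b^*)=0$ makes $v_{a^*,b^*}$ sufficiently smooth (Lemma~\ref{Lemma: smoothness remark}), in particular $C^1(\mathbb{R})$, so convexity is equivalent to $v'_{a^*,b^*}$ being non-decreasing; by (1) it suffices to show $x\mapsto v^{f'}_{a^*,b^*}(x)=\mathbb{E}_x[\int_0^\infty e^{-qt}f'(Y^{a^*,b^*}(t))\,\diff t]$ is non-decreasing. I would obtain this from a pathwise coupling: for $x_1\le x_2$, build $Y^{a^*,b^*}$ from $x_1$ and from $x_2$ on the same space driven by the common pair $(X,N^r)$. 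The two building blocks of the concatenation defining $Y^{a^*,b^*}$ — the Skorokhod reflection from below at $a^*$ on each inter-arrival interval and the downward push $y\mapsto y\wedge b^*$ at the times of $\mathcal{T}_r$ — are non-decreasing in the current position, so an induction over successive Poisson intervals propagates $Y^{a^*,b^*}_{x_1}(t)\le Y^{a^*,b^*}_{x_2}(t)$ to all $t\ge 0$ (cf.\ the construction in \cite{noba_optimal_2018}). Since $f$ is convex, $f'$ is non-decreasing, so $f'(Y^{a^*,b^*}_{x_1}(t))\le f'(Y^{a^*,b^*}_{x_2}(t))$, and integrating against $e^{-qt}\,\diff t$ and taking expectations — legitimate because $f'$ has polynomial growth (Remark~\ref{Remark: polynomial growth}) and Assumptions~\ref{asm: on f}--\ref{asm: on X} control the growth of $Y^{a^*,b^*}$ — gives $v^{f'}_{a^*,b^*}(x_1)\le v^{f'}_{a^*,b^*}(x_2)$.

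For part (3), I would evaluate \eqref{v_star_derivative} on $(-\infty,a^*]$: for $x\le a^*$ one has $W^{(q)}(x-y)=0$ for every $y\in(a^*,b^*)$, so $\rho_{a^*,b^*}^{(q)}(x;\tilde{f}')=0$ and hence $\rho_{a^*,b^*}^{(q,r)}(x;\tilde{f}')=0$, while $x-b^*<0$ kills $\int^x_{b^*}W^{(q+r)}(x-y)\tilde{f}'(y)\,\diff y$ and $\overline{W}^{(q+r)}(x-b^*)$; thus $v'_{a^*,b^*}(x)=-C_U$ for all $x\le a^*$. Together with the monotonicity of $v'_{a^*,b^*}$ from (2), this gives $\inf_{x\in\mathbb{R}}v'_{a^*,b^*}(x)=-C_U$, i.e.\ $v'_{a^*,b^*}\ge -C_U$ on $\mathbb{R}$. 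The one step that is not a direct computation is the pathwise monotonicity coupling in (2); this comparison property is classical for reflected Lévy processes, the only mild subtlety being the concatenated definition of the periodic-classical reflected process, which is already in place in \cite{noba_optimal_2018}. (An analytic route for (2) — proving $v''_{a^*,b^*}\ge 0$ directly from \eqref{Eq: value 2nd derivative} — looks less tractable since derivatives of scale functions need not be signed, so I would avoid it.)
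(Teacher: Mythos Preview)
Your proposal is correct and follows essentially the same approach as the paper: in (1) both arguments reduce \eqref{Eq: convexity misc 1} using $\gamma(a^*,b^*)=0$ and the identity $\rho_{a^*,b^*}^{(q)}(b^*;\tilde f')=-(C_U+C_D)$ (the paper derives it by evaluating \eqref{v_star_derivative} at $b^*$ via Proposition~\ref{Prop: iff conditions}, you via \eqref{rho_a_b_expression}, which is equivalent); in (2) both use the pathwise monotonicity of $Y^{a^*,b^*}$ in the starting point together with convexity of $f$; and in (3) both use $v'_{a^*,b^*}\equiv -C_U$ on $(-\infty,a^*]$ combined with (2). Your write-up simply supplies more detail for the coupling in (2) and the vanishing on $(-\infty,a^*]$ in (3), which the paper leaves implicit.
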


\begin{proof}
(1) By Proposition \ref{Prop: iff conditions}, we have $v'_{a^*, b^*}(b^*) = C_D$. Combining this with the expression for $v'_{a^*, b^*}$ in \eqref{v_star_derivative}, we get
    \begin{align}
        C_D = -\rho_{a^*, b^*}^{(q)}(b^*; \tilde{f}') - C_U.\label{Eq: convexity misc 4}
    \end{align}
Substituting $\gamma(a^*, b^*) = 0$ and \eqref{Eq: convexity misc 4} into \eqref{Eq: convexity misc 1}, $v^{f'}_{a^*, b^*}(x)$ matches  \eqref{v_star_derivative}, as required.
\\
(2) By the convexity of $f$ and the monotonicity of $Y^{a^*, b^*}$ in the starting point, the mapping $x \mapsto v^{f'}_{a^*, b^*}(x)$ is monotone. Hence, by (1), $v_{a^*, b^*}$ is convex.
\\
(3) By (2) and as $v'_{a^*, b^*}(x) = -C_U$ for $x \in (-\infty, a^*]$, $v'_{a^*, b^*}(x) \geq -C_U$ for all $x \in \mathbb{R}$. 
\end{proof}

As a direct consequence of Lemma \ref{Lemma: convexity}(ii) and the condition $\mathfrak{C}$, $\mathcal{M} v_{a^*,b^*}$ has the following characterization.
\begin{corollary} \label{corollary_generator} 
We have
\begin{align*}
    \mathcal{M}v_{a^*, b^*}(x) &= \begin{cases}
        v_{a^*, b^*}(x) , & x < b^*\\
        C_D(x - b^*) + v_{a^*, b^*}(b^*), & x \geq b^*.\\
    \end{cases}
\end{align*}
\end{corollary}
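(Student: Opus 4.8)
The plan is to exploit the convexity of $v_{a^*,b^*}$ established in Lemma~\ref{Lemma: convexity}(2) together with the boundary identity $v'_{a^*,b^*}(b^*) = C_D$, which holds by condition $\mathfrak{C}$ and Proposition~\ref{Prop: iff conditions}. Since $v_{a^*,b^*}$ is sufficiently smooth it is in particular $C^1(\mathbb{R})$, so $v'_{a^*,b^*}$ exists everywhere and, by convexity, is non-decreasing. Combined with $v'_{a^*,b^*}(b^*) = C_D$, this yields $v'_{a^*,b^*}(y) \leq C_D$ for $y \leq b^*$ and $v'_{a^*,b^*}(y) \geq C_D$ for $y \geq b^*$; equivalently, $y \mapsto v_{a^*,b^*}(y) - C_D y$ is non-increasing on $(-\infty, b^*]$ and non-decreasing on $[b^*, \infty)$, hence globally minimized at $y = b^*$.

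Next I would rewrite the operator. Substituting $y = x - l$ in the definition $\mathcal{M}v_{a^*,b^*}(x) = \inf_{l \geq 0}\{C_D l + v_{a^*,b^*}(x - l)\}$ turns the constraint $l \geq 0$ into $y \leq x$ and gives $\mathcal{M}v_{a^*,b^*}(x) = C_D x + \inf_{y \leq x}\{v_{a^*,b^*}(y) - C_D y\}$. If $x \geq b^*$, the unconstrained minimizer $y = b^*$ of $y \mapsto v_{a^*,b^*}(y) - C_D y$ is feasible, so the infimum equals $v_{a^*,b^*}(b^*) - C_D b^*$, giving $\mathcal{M}v_{a^*,b^*}(x) = C_D(x - b^*) + v_{a^*,b^*}(b^*)$. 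If $x < b^*$, then $y \mapsto v_{a^*,b^*}(y) - C_D y$ is non-increasing on $(-\infty, x]$, so the infimum over $y \leq x$ is attained at $y = x$, giving $\mathcal{M}v_{a^*,b^*}(x) = v_{a^*,b^*}(x)$. This is exactly the claimed formula.

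There is essentially no hard step: the only points requiring care are that sufficient smoothness guarantees differentiability everywhere (so convexity translates into monotonicity of $v'_{a^*,b^*}$), and that the infimum is genuinely attained, which is automatic here since $l \mapsto C_D l + v_{a^*,b^*}(x-l)$ is convex. One could alternatively argue directly with $g(l) \coloneqq C_D l + v_{a^*,b^*}(x-l)$, noting that $g'(l) = C_D - v'_{a^*,b^*}(x - l)$ is non-negative for every $l \geq 0$ when $x < b^*$ (so the minimum is at $l = 0$) and changes sign from non-positive to non-negative at $l = x - b^*$ when $x \geq b^*$ (so the minimum is at $l = x - b^*$); this gives the same conclusion.
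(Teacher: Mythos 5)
Your proof is correct and is exactly the argument the paper has in mind: the paper gives no written proof for this corollary, stating only that it is a direct consequence of Lemma~\ref{Lemma: convexity}(2) (convexity) and the condition $\mathfrak{C}$ (which, via Proposition~\ref{Prop: iff conditions}, gives $v'_{a^*,b^*}(b^*)=C_D$), and your derivation spells out precisely how those two facts yield the stated formula for $\mathcal{M}v_{a^*,b^*}$.
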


Furthermore, standard computations lead to the following characterization of $(\mathcal{L} - q) v_{a^*, b^*}$, with the proof deferred to Appendix \ref{Sect: proof of verification generator}. 
\begin{lemma} \label{Lemma: generator M}
We have
\begin{align*}
    (\mathcal{L} - q) v_{a^*, b^*}(x) + f(x) &= \begin{cases}
        \tilde{f}(x) - \tilde{f}(a^*), & x \leq a^*, \\
        0, & a^* < x < b^*,\\
        -r\left(v_{a^*, b^*}(b^*) - v_{a^*, b^*}(x) + C_D(x - b^*)\right), & x \geq b^*.\\
    \end{cases}
\end{align*}
\end{lemma}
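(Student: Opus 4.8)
The plan is to verify each of the three regimes by direct computation, using the explicit formula for $v'_{a^*,b^*}$ in \eqref{v_star_derivative} together with the simplifications afforded by the condition $\mathfrak{C}'$. The governing idea is that on the ``inaction'' region $(a^*,b^*)$ the value function should satisfy the generator equation exactly (this is how the running-cost formula was built), that below $a^*$ the linearity of $v_{a^*,b^*}$ reduces $(\mathcal L - q)v_{a^*,b^*}$ to an affine expression whose value can be read off, and that above $b^*$ the discrepancy is exactly the amount by which the operator $\mathcal M$ differs from the identity, as recorded in Corollary \ref{corollary_generator}.

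For the middle regime $a^* < x < b^*$, I would proceed as follows. First note that on this interval $v_{a^*,b^*}$ is sufficiently smooth by Lemma \ref{Lemma: smoothness remark} (since $\Gamma(a^*,b^*)=0$), so $(\mathcal L-q)v_{a^*,b^*}(x)$ is well-defined. The cleanest route is to differentiate: show that $x\mapsto (\mathcal L-q)v_{a^*,b^*}(x)+f(x)$ has zero derivative on $(a^*,b^*)$ and vanishes at one point (e.g.\ in the limit $x\downarrow a^*$, using continuity with the $x\le a^*$ formula which gives $\tilde f(a^*)-\tilde f(a^*)=0$). To get the derivative, I would use that $(\mathcal L-q)$ commutes with $\partial_x$ on this region, so $\partial_x[(\mathcal L-q)v_{a^*,b^*}+f] = (\mathcal L-q)v'_{a^*,b^*} + f'$; then substitute \eqref{v_star_derivative} and invoke the standard fluctuation identities for $W^{(q)}$, $W^{(q+r)}$, $Z^{(q)}(\cdot,\Phi_{q+r})$ — specifically that $(\mathcal L-q)W^{(q)}=0$ and $(\mathcal L-q-r)W^{(q+r)}=0$ away from $0$, and the resolvent-type identities implicit in \eqref{Eq: rho}--\eqref{Eq: rho_r} — to see everything cancels. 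Alternatively, and perhaps more transparently, I would observe that $v^{f}_{a^*,b^*}$ was constructed in Lemma \ref{Lemma: inventory cost} precisely as $\mathbb E_x[\int_0^\infty e^{-qt}f(Y^{a^*,b^*}(t))\,dt]$, so on the inaction region $Y^{a^*,b^*}$ behaves like $X$ and the Feynman--Kac / Dynkin identity gives $(\mathcal L-q)v^f_{a^*,b^*}+f=0$ directly, while $(\mathcal L-q)$ applied to the linear part $-C_Ux-C_U\psi'(0+)/q$ of the control-cost contributes $-C_U\psi'(0+) + qC_Ux + C_U\psi'(0+) = qC_Ux$; combined with the constant/scale-function pieces this should reassemble to $0$ after using $\mathfrak C'$. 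I expect the bookkeeping here — tracking which scale-function terms are annihilated by $\mathcal L-q$ versus $\mathcal L-q-r$ and matching boundary terms at $b^*$ — to be the main obstacle, though it is routine in spirit.

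For $x\le a^*$, the computation is genuinely short: by \eqref{v_star_derivative} (or directly from the structure of $\pi_{a^*,b^*}$) we have $v_{a^*,b^*}(x) = -C_U x + v_{a^*,b^*}(0) + C_U\cdot 0 + \dots$ affine, in fact $v_{a^*,b^*}'(x)=-C_U$ and $v_{a^*,b^*}''(x)=0$ on $(-\infty,a^*)$, so $\mathcal Lv_{a^*,b^*}(x) = \gamma(-C_U) + \int_{(-\infty,0)}[v_{a^*,b^*}(x+z)-v_{a^*,b^*}(x)-(-C_U)z1_{\{-1<z<0\}}]\mu(dz)$. The jump integral is delicate only because $x+z$ may cross $a^*$; I would split the integral at $z = a^*-x$ and use that $v_{a^*,b^*}$ is $C^1$ (at least) across $a^*$ together with the value of $(\mathcal L-q)v_{a^*,b^*}$ just established to hold at $a^*{+}$, i.e.\ bootstrap from the $a^*<x<b^*$ case by a limiting argument: since $v_{a^*,b^*}$ is sufficiently smooth globally and agrees with an affine function on $(-\infty,a^*]$, one can write $(\mathcal L-q)v_{a^*,b^*}(x)$ for $x<a^*$ as $(\mathcal L-q)(\text{affine})(x)$ plus a correction integral over $\{z: x+z>a^*\}$ of $v_{a^*,b^*}(x+z)-(\text{affine})(x+z)$, and recognize the affine part gives $-qv_{a^*,b^*}(a^*) - \tilde f(a^*)$-type terms; comparing with the known formula at $x=a^*$ forces the clean answer $\tilde f(x)-\tilde f(a^*)$. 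Here I would lean on Assumption \ref{asm: on X} to guarantee the integral converges given polynomial growth.

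For $x\ge b^*$, I would combine Corollary \ref{corollary_generator} with the middle-regime result. Write $g(x) := C_D(x-b^*)+v_{a^*,b^*}(b^*)$, the ``pushed-down'' value. Since $v_{a^*,b^*}$ is sufficiently smooth across $b^*$ and on $(a^*,b^*)$ satisfies $(\mathcal L-q)v_{a^*,b^*}+f=0$, I would extend that identity by continuity to $x=b^*$ and then, for $x>b^*$, compute $(\mathcal L-q)v_{a^*,b^*}(x)$ using \eqref{v_star_derivative} and the scale-function identities (now the $W^{(q+r)}$-terms contribute, reflecting the periodic reflection). The target identity $(\mathcal L-q)v_{a^*,b^*}(x)+f(x) = -r(g(x)-v_{a^*,b^*}(x))$ is equivalent, after subtracting the $r=0$ baseline, to an identity purely among the $r$-dependent ($W^{(q+r)}$, $Z^{(q)}(\cdot,\Phi_{q+r})$) terms, which I would check either by differentiating both sides (reducing to an identity in $v'_{a^*,b^*}$, already explicit in \eqref{v_star_derivative}) and matching at $x=b^*$, or by invoking the probabilistic representation: above $b^*$, in the next $dt$ the process $Y^{a^*,b^*}$ either is not observed (Poisson rate $1-r\,dt$, behaves like $X$) or is observed and pushed to $b^*$ (rate $r\,dt$), and the Dynkin identity for this piecewise dynamics yields exactly $(\mathcal L-q)v_{a^*,b^*}+f+r(\mathcal Mv_{a^*,b^*}-v_{a^*,b^*})=0$, i.e.\ the claimed formula once $\mathcal Mv_{a^*,b^*}(x)=g(x)$ for $x\ge b^*$ from Corollary \ref{corollary_generator}. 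The only real work is confirming the scale-function algebra, which is the same bookkeeping obstacle noted above; everything else is assembling pieces already in hand.
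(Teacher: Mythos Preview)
Your overall plan is sound and close to the paper's. One simplification you are missing for $x\le a^*$: since $X$ is spectrally negative, the L\'evy measure $\mu$ is supported on $(-\infty,0)$, so for $x\le a^*$ every jump destination $x+z$ satisfies $x+z<x\le a^*$. The argument of $v_{a^*,b^*}$ in the nonlocal integral never crosses $a^*$, your proposed split at $z=a^*-x$ is vacuous, and no bootstrapping is needed. The paper simply reads off from \eqref{Eq: NPV of costs} (using $\Gamma(a^*,b^*)=0$) that $v_{a^*,b^*}(x)=(\tilde f(a^*)-C_U\psi'(0+))/q-C_Ux$ on $(-\infty,a^*]$, applies $\mathcal L(\alpha+\beta x)=\beta\psi'(0+)$, and obtains $\tilde f(x)-\tilde f(a^*)$ in one line.

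For $x>a^*$ the paper does essentially what your second alternative suggests: it decomposes $v_{a^*,b^*}=v^{LR}_{a^*,b^*}+v^f_{a^*,b^*}$ and, observing that each summand has exactly the scale-function structure treated in Lemma~7.2 of \cite{mata_bailout_2023}, imports the generator identities from there --- yielding $(\mathcal L-q)v^{LR}_{a^*,b^*}=0$ and $(\mathcal L-q)v^f_{a^*,b^*}+f=0$ on $(a^*,b^*)$, and the corresponding $r$-shifted versions on $[b^*,\infty)$ --- then sums. Your Dynkin/Feynman--Kac route is the probabilistic shadow of these same identities and would also succeed, but note that the reason $\mathcal L$ (the generator of $X$, with its nonlocal part evaluated on the genuine extension of the value function below $a^*$) coincides with the generator of $Y^{a^*,b^*}$ on $(a^*,b^*)$ is precisely that $v^f_{a^*,b^*}$ is constant and $v^{LR}_{a^*,b^*}$ has slope $-C_U$ below $a^*$; the phrase ``$Y$ behaves like $X$ on the inaction region'' does not by itself justify the step, since downward jumps from $(a^*,b^*)$ can trigger reflection.
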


The polynomial growth of $v_{a^*, b^*}$ can be established using the proof technique from Lemma 4.7 in \cite{perez_optimal_2020}. As the method is nearly identical, the proof of the following result is omitted.
\begin{lemma}
    The function $x \mapsto v_{a^*, b^*}(x)$ is of polynomial growth.
\end{lemma}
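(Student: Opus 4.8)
The plan is to bound $|v_{a^*,b^*}(x)|$ by a polynomial in $|x|$ directly from the probabilistic definition of the NPV of costs under $\pi_{a^*,b^*}$, rather than from the closed form in Lemma~\ref{Lemma: NPV of costs}: in that expression every summand individually grows like $e^{\Phi_{q+r}x}$ as $x\to\infty$ and the polynomial behaviour emerges only after a delicate cancellation, so the direct route is more robust. Since $v_{a^*,b^*}$ is affine on $(-\infty,a^*]$ (one immediately pushes up to $a^*$ at cost $C_U(a^*-x)$) and is continuous, hence bounded on every compact interval, the only thing to control is the growth as $x\to+\infty$.

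The engine of the argument is the pathwise sandwich $a^*\le Y^{a^*,b^*}(t)\le Y^{a^*,\infty}(t)$ for all $t\ge0$, where $Y^{a^*,\infty}$ is $X$ reflected from below at $a^*$; the left inequality is by construction, and the right one is already used in the proof of Lemma~\ref{Lemma: F min} and is immediate from the concatenation construction of \cite{noba_optimal_2018} (inserting downward pushes can only lower the process). Writing $Y^{a^*,\infty}(t)=X(t)+(a^*-\inf_{s\le t}X(s))^+$ one gets the crude bound $|Y^{a^*,b^*}(t)|\le |a^*|+2\sup_{s\le t}|X(s)|$, and, under $\mathbb{P}_x$ with $X(s)=x+\bar X(s)$ for $\bar X$ the \lev process started at $0$, Assumption~\ref{asm: on X} together with standard $L^m$ maximal estimates for \lev processes (see \cite{kyprianou_fluctuations_2014}) gives $\mathbb{E}_x[\sup_{s\le t}|X(s)|^m]\le c_m(1+|x|^m+t^m)$ for every $m\ge1$. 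One also records $|Y^{a^*,b^*}(t)-X(t)|\le |a^*|+\sup_{s\le t}|X(s)|$, which follows from the same sandwich.

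With this in hand I would bound the three contributions to $v_{a^*,b^*}(x)$ separately. For the running cost, Assumption~\ref{asm: on f}(1) and Remark~\ref{Remark: polynomial growth} give $|f(y)|\le c(1+|y|^n)$, so $\mathbb{E}_x[\int_0^\infty e^{-qt}|f(Y^{a^*,b^*}(t))|\,\diff t]\le c\int_0^\infty e^{-qt}(1+\mathbb{E}_x[\sup_{s\le t}|X(s)|^n])\,\diff t\le c'(1+|x|^n)$. For the downward control, each push at a Poisson arrival is $(Y^{a^*,b^*}(T(i)-)-b^*)^+\le |a^*|+|b^*|+2\sup_{s\le t}|X(s)|$, whence, using that $N^r$ is independent of $X$, $\mathbb{E}_x[L^{a^*,b^*}(t)]\le \mathbb{E}[N^r(t)]\,\mathbb{E}_x[|a^*|+|b^*|+2\sup_{s\le t}|X(s)|]=rt\,\mathbb{E}_x[|a^*|+|b^*|+2\sup_{s\le t}|X(s)|]$, which is polynomial in $t$ and at most linear in $|x|$; since $R^{a^*,b^*}(t)=(Y^{a^*,b^*}(t)-X(t))+L^{a^*,b^*}(t)$, the same holds for $\mathbb{E}_x[R^{a^*,b^*}(t)]$. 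By Tonelli, $\mathbb{E}_x[\int_{[0,\infty)}e^{-qt}(C_U\,\diff R^{a^*,b^*}(t)+C_D\,\diff L^{a^*,b^*}(t))]=q\int_0^\infty e^{-qt}\mathbb{E}_x[C_U R^{a^*,b^*}(t)+C_D L^{a^*,b^*}(t)]\,\diff t\le c(1+|x|)$. Summing the three estimates yields $|v_{a^*,b^*}(x)|\le C(1+|x|^n)$.

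I expect the only real friction to be bookkeeping rather than conceptual: pinning down the exact admissible power of $t$ in the \lev maximal estimate, and checking that all the pathwise majorants are $\mathbb{P}_x$-integrable so that the Tonelli interchange and the independence factorisation are legitimate; the pathwise sandwich and the affine/continuous structure of $v_{a^*,b^*}$ are already available from earlier in the paper. Alternatively one could feed the asymptotics of the scale functions into the formula of Lemma~\ref{Lemma: NPV of costs}, but then the main labour becomes exhibiting the exact cancellation of the $e^{\Phi_{q+r}x}$ terms, which is precisely what the probabilistic argument sidesteps.
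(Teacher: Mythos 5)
Your argument is correct, and it fills in a proof that the paper itself omits (the paper only cites Lemma 4.7 of \cite{perez_optimal_2020} and states that the method is ``nearly identical'' to the one there, which is the same probabilistic approach you use). The three ingredients you invoke are all sound and already available in the paper: the pathwise sandwich $a^*\le Y^{a^*,b^*}(t)\le Y^{a^*,\infty}(t)$ (used in Corollary~\ref{Corollary: inventory cost computation} and Lemma~\ref{Lemma: F min}), the polynomial bound on $f$ from Remark~\ref{Remark: polynomial growth}, and polynomial-in-$(t,|x|)$ moment bounds for $\sup_{s\le t}|X(s)|$ guaranteed by Assumption~\ref{asm: on X}. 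Your decision to bound $L^{a^*,b^*}$ first (via $L^{a^*,b^*}(t)\le N^r(t)\bigl(|a^*|+|b^*|+2\sup_{s\le t}|X(s)|\bigr)$, legitimately factored by independence of $N^r(t)$ and $X$) and then recover $R^{a^*,b^*}=(Y^{a^*,b^*}-X)+L^{a^*,b^*}$ is the right move, since $R$ and $L$ cannot be controlled individually from $Y^{a^*,b^*}-X$ alone. Your observation that the scale-function route would force one to exhibit the cancellation of the $e^{\Phi_{q+r}x}$ terms in Lemma~\ref{Lemma: NPV of costs} is apt and is precisely why the probabilistic route is preferred in this literature. Two cosmetic points: to bound $|v_{a^*,b^*}|$ you should split off $|C_U|$ and $|C_D|$ (the signs of $C_U,C_D$ are unrestricted, but since $R$ and $L$ are nondecreasing, each Tonelli interchange applies term-by-term to nonnegative integrands, so nothing breaks); and the constant in $|Y^{a^*,b^*}(t)-X(t)|\le|a^*|+c\sup_{s\le t}|X(s)|$ is harmlessly off, which does not affect the conclusion.
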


\begin{lemma}\label{Lemma: verification limit}
    For all admissible strategies $\pi \in \Pi$, 
    \[\limsup_{t, n \uparrow \infty} \mathbb{E}_x \left[e^{-q(t\wedge \tau_n)} v_{a^*, b^*}(Y^\pi(t\wedge \tau_n))\right] \leq 0.\]
\end{lemma}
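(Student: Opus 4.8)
The plan is to establish the stronger assertion that $\mathbb{E}_x\big[e^{-q(t\wedge\tau_n)}\,|v_{a^*,b^*}(Y^\pi(t\wedge\tau_n))|\big]\to 0$ as $t,n\uparrow\infty$, which obviously yields the stated inequality. The two inputs are the polynomial growth of $v_{a^*,b^*}$ from the preceding lemma, say $|v_{a^*,b^*}(y)|\le c_0(1+|y|^p)$ for all $y$ and some constants $c_0>0$, $p\ge 1$, together with admissibility of $\pi$, i.e.\ finiteness of $\mathbb{E}_x[\int_0^\infty e^{-qs}|f(Y^\pi(s))|\,\diff s]$ and of $\mathbb{E}_x[\int_{[0,\infty)}e^{-qs}(\diff R^\pi(s)+\diff L^\pi(s))]$. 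Since $Y^\pi$ does not explode, $\tau_n\uparrow\infty$ $\mathbb{P}_x$-a.s., so $t\wedge\tau_n\uparrow\infty$ $\mathbb{P}_x$-a.s.; by dominated convergence it therefore suffices to produce a fixed $\mathbb{P}_x$-integrable random variable dominating the family $\{e^{-q(t\wedge\tau_n)}|v_{a^*,b^*}(Y^\pi(t\wedge\tau_n))|\}_{t,n}$ and to show $e^{-qs}|v_{a^*,b^*}(Y^\pi(s))|\to 0$ $\mathbb{P}_x$-a.s.\ as $s\to\infty$. By the growth bound, both reduce to controlling $e^{-qs}|Y^\pi(s)|^p$, and I would split $|Y^\pi(s)|\le|X(s)|+R^\pi(s)+L^\pi(s)$.

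For the \lev part, Assumption \ref{asm: on X} (together with spectral negativity, which controls the upward tail automatically) gives finite polynomial moments of all orders for $X(1)$; the strong law $X(s)/s\to\psi'(0+)$ then gives $e^{-qs}|X(s)|^p\to 0$ $\mathbb{P}_x$-a.s., while applying Doob's maximal inequality to the martingale $s\mapsto X(s)-x-s\psi'(0+)$ on each interval $[m,m+1]$ and summing the geometric series $\sum_m e^{-qm}(m+2)^p$ yields $\mathbb{E}_x[\sup_{s\ge0}e^{-qs}|X(s)|^p]<\infty$. For the control parts, write $\mathcal{I}_R:=\int_{[0,\infty)}e^{-qu}\,\diff R^\pi(u)$, which is $\mathbb{P}_x$-a.s.\ finite by admissibility; then $e^{-qs}R^\pi(s)=\int_{[0,s]}e^{-q(s-u)}e^{-qu}\,\diff R^\pi(u)\le\mathcal{I}_R$, and splitting this integral at $s/2$ gives $e^{-qs}R^\pi(s)\to 0$ $\mathbb{P}_x$-a.s.; symmetrically for $L^\pi(s)$ with $\mathcal{I}_L$. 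In particular $e^{-qs}Y^\pi(s)\to 0$ $\mathbb{P}_x$-a.s.

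If $p=1$ this already closes the argument: $\sup_{s\ge0}e^{-qs}|Y^\pi(s)|\le\sup_{s\ge0}e^{-qs}|X(s)|+\mathcal{I}_R+\mathcal{I}_L\in L^1(\mathbb{P}_x)$ dominates, and the a.s.\ convergence has been shown, so dominated convergence finishes. If $p>1$ — which, since $v_{a^*,b^*}$ is affine below $a^*$, can only be caused by super-linear growth of $f$ at $+\infty$ — I would additionally invoke the first admissibility condition: because $f$ is convex and eventually polynomial, $|f(y)|\ge c_1 y^p-c_2$ for all large $y>0$, so finiteness of $\mathbb{E}_x[\int_0^\infty e^{-qs}|f(Y^\pi(s))|\,\diff s]$ upgrades to finiteness of $\mathbb{E}_x[\int_0^\infty e^{-qs}\big((Y^\pi(s))^+\big)^p\,\diff s]$; combined with $e^{-qs}Y^\pi(s)\to 0$ a.s.\ and the fact that, away from its jump times, $Y^\pi$ has continuous paths (its only jumps being downward from $X$, downward from $L^\pi$, and upward from $R^\pi$), this forces $e^{-qs}\big((Y^\pi(s))^+\big)^p\to 0$ a.s.\ and provides an integrable dominating variable; the negative part $\big((Y^\pi(s))^-\big)^p$ is bounded by $(|X(s)|+L^\pi(s))^p$ but is only needed at order one (since $v_{a^*,b^*}$ is linear at $-\infty$), so the linear estimate above suffices there. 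Assembling these pieces and applying dominated convergence completes the proof.

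The main obstacle is the last step: for a general admissible strategy one must rule out pathological growth of $R^\pi$, hence of $Y^\pi$, that would prevent $e^{-qs}\big((Y^\pi(s))^+\big)^p$ from vanishing. The finite-discounted-control condition alone only bounds $e^{-qs}R^\pi(s)$, not its $p$-th power, so one genuinely has to use that a strategy with finite discounted running cost cannot drive $Y^\pi$ upward fast enough to spoil the $p$-th moment estimate; turning the integrability statement ``$s\mapsto e^{-qs}((Y^\pi(s))^+)^p$ is $\mathbb{P}_x$-integrable on $[0,\infty)$'' into the pointwise statement ``it vanishes as $s\to\infty$'' is where the semimartingale structure of $Y^\pi$ (path-continuity between jumps, one-sided jumps) must be exploited rather than integrability alone.
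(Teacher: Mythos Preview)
Your approach is genuinely different from the paper's and, for $p>1$, has a real gap that you yourself flag but do not close. The paper does \emph{not} try to control $|Y^\pi|^p$ at all. Instead it observes that, since $v_{a^*,b^*}$ is the cost of an admissible strategy, one has the one-sided bound $v_{a^*,b^*}(y)\le g(y):=\mathbb E_y\big[\int_0^\infty e^{-qs}f(X(s))\,\diff s\big]$ (the cost of doing nothing starting from $y$), and then proves an auxiliary lemma showing that the regular/slow variation of $f$ forces $g(y)/f(y)\to 1/q$ as $|y|\to\infty$, hence $|g|\le M|f|\vee N$ for constants $M,N$. This compares $v_{a^*,b^*}(Y^\pi)$ directly to $f(Y^\pi)$, which is exactly the quantity the first admissibility condition controls; no polynomial domination in $|Y^\pi|$ is ever needed.

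Concretely, your argument for $p>1$ breaks in two places. First, the premise ``$f$ is convex and eventually polynomial, so $|f(y)|\ge c_1 y^p-c_2$'' is not available: Assumption~\ref{asm: on f}(1) only says $f$ is slowly or regularly varying at $+\infty$, not polynomial, and you have no a~priori link between the growth exponent $p$ of $v_{a^*,b^*}$ and any lower growth index of $f$; establishing a quantitative comparison between the value function and $f$ is precisely what the paper's regular-variation lemma accomplishes, but via $g$ rather than via a polynomial. Second, even granting $\mathbb E_x\big[\int_0^\infty e^{-qs}((Y^\pi(s))^+)^p\,\diff s\big]<\infty$, you cannot manufacture a single $\mathbb P_x$-integrable dominator for $e^{-q(t\wedge\tau_n)}((Y^\pi(t\wedge\tau_n))^+)^p$: your only control on $R^\pi$ is $e^{-qs}R^\pi(s)\le \mathcal I_R\in L^1$, which yields $e^{-qs}(R^\pi(s))^p\le e^{(p-1)qs}\mathcal I_R^{\,p}$ --- useless --- and admissibility gives no higher moments of $\mathcal I_R$. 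The ``semimartingale structure'' you invoke does not rescue this, because $R^\pi$ is an arbitrary nondecreasing adapted process and may jump, so integrability of $s\mapsto e^{-qs}((Y^\pi(s))^+)^p$ on $[0,\infty)$ does not force it to vanish as $s\to\infty$. The paper's route sidesteps all of this by never introducing the exponent $p$ and instead exploiting the regular-variation hypothesis, which your argument does not use at all.
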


The proof of Lemma \ref{Lemma: verification limit} is deferred to Appendix \ref{Sect: proof of verification limit}.
\begin{theorem}\label{Thm: optimal strategy}
The strategy $\pi_{a^*,b^*}$ is optimal with
    \[v_{a^*, b^*}(x) = \inf_{\pi \in \Pi} v^\pi(x), \quad x\in \mathbb{R}.\]
\end{theorem}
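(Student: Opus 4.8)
The plan is to verify the hypotheses of the verification lemma (Lemma \ref{Lemma: verification lemma}) for the candidate function $w = v_{a^*, b^*}$, assembling the pieces that have already been established in the preceding lemmas. Since $v_{a^*, b^*}$ is the NPV of costs under the admissible periodic-classical barrier strategy $\pi_{a^*, b^*}$, it already qualifies as the NPV of costs under an admissible strategy, so it remains to check: sufficient smoothness, polynomial growth, $w' \ge -C_U$, the variational inequality $(\mathcal{L} - q)w + r(\mathcal{M}w - w) + f \ge 0$ on $\mathbb{R}$, and the transversality condition.

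First I would collect the easy items. Sufficient smoothness of $v_{a^*, b^*}$ follows from Lemma \ref{Lemma: smoothness remark} together with $\Gamma(a^*, b^*) = 0$ (part of condition $\mathfrak{C}'$). Polynomial growth is the content of the unnamed lemma just before Lemma \ref{Lemma: verification limit}. The inequality $v'_{a^*, b^*} \ge -C_U$ is Lemma \ref{Lemma: convexity}(3). The transversality condition is Lemma \ref{Lemma: verification limit}. So the only substantive step is the variational inequality.

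For the variational inequality, I would combine Corollary \ref{corollary_generator} and Lemma \ref{Lemma: generator M}. Writing $g(x) \coloneqq (\mathcal{L}-q)v_{a^*,b^*}(x) + r(\mathcal{M}v_{a^*,b^*}(x) - v_{a^*,b^*}(x)) + f(x)$, I would split into the three regions. On $(a^*, b^*)$: Corollary \ref{corollary_generator} gives $\mathcal{M}v_{a^*,b^*}(x) = v_{a^*,b^*}(x)$ so the $r$-term vanishes, and Lemma \ref{Lemma: generator M} gives $(\mathcal{L}-q)v_{a^*,b^*}(x) + f(x) = 0$; hence $g \equiv 0$ there. On $(-\infty, a^*]$: again $\mathcal{M}v_{a^*,b^*}(x) = v_{a^*,b^*}(x)$ (since $x < b^*$), so $g(x) = \tilde{f}(x) - \tilde{f}(a^*)$ by Lemma \ref{Lemma: generator M}; this is $\ge 0$ because $\tilde{f}$ is strictly decreasing on $(-\infty, \bar a)$ by Assumption \ref{asm: on f}(2), $a^* < \bar a$ (since $a^* \in (\underline{a}_1 \vee \underline{a}_2, \bar a)$, as $\underline{a}_1, \underline{a}_2 < \bar a$ and $\underline{\Gamma}(\bar a) > 0$ by Lemma \ref{Lemma: F min}), and $x \le a^* < \bar a$. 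On $[b^*, \infty)$: by Corollary \ref{corollary_generator}, $\mathcal{M}v_{a^*,b^*}(x) - v_{a^*,b^*}(x) = C_D(x-b^*) + v_{a^*,b^*}(b^*) - v_{a^*,b^*}(x)$, so $g(x) = (\mathcal{L}-q)v_{a^*,b^*}(x) + f(x) + r\bigl(C_D(x-b^*) + v_{a^*,b^*}(b^*) - v_{a^*,b^*}(x)\bigr)$, and Lemma \ref{Lemma: generator M} shows the first part cancels the $r$-term exactly, giving $g \equiv 0$ on $[b^*,\infty)$. (One should double-check continuity at $a^*$ and $b^*$, but sufficient smoothness handles that.) Thus $g \ge 0$ everywhere.

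Having verified every hypothesis, Lemma \ref{Lemma: verification lemma} yields $v_{a^*,b^*}(x) = v(x) = \inf_{\pi \in \Pi} v^\pi(x)$, which is exactly the claim of the theorem; optimality of $\pi_{a^*,b^*}$ is then immediate since $v_{a^*,b^*} = v^{\pi_{a^*,b^*}}$. I do not anticipate a serious obstacle here: the proof is essentially bookkeeping, assembling results proved earlier. The only mild subtlety is confirming, at the boundary points $a^*$ and $b^*$, that the region-wise formulas glue together consistently and that the sign analysis on $(-\infty, a^*]$ genuinely uses $a^* < \bar a$ rather than merely $a^* \le \bar a$; the strict inequality $a^* < \bar a$ must be read off from the existence proof (Proposition \ref{Prop: existence prop} places $a^*$ in the open interval $(\underline{a}_1 \vee \underline{a}_2, \bar a)$ where $\underline{\Gamma}$ has its root, since $\underline{\Gamma}(\bar a) > 0$).
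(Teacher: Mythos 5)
Your proof is correct and follows essentially the same route as the paper: assemble the hypotheses of the verification lemma (smoothness from Lemma \ref{Lemma: smoothness remark} and $\mathfrak{C}'$, polynomial growth, $v'_{a^*,b^*}\ge -C_U$ from Lemma \ref{Lemma: convexity}, transversality from Lemma \ref{Lemma: verification limit}) and check the variational inequality region by region via Corollary \ref{corollary_generator} and Lemma \ref{Lemma: generator M}, using $a^*<\bar a$ and Assumption \ref{asm: on f}(2) on $(-\infty,a^*]$. The only difference is that you spell out the provenance of the strict inequality $a^*<\bar a$ somewhat more explicitly than the paper does, but the substance is identical.
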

\begin{proof}
    Notice that $v_{a^*, b^*}$ is sufficiently smooth on $\mathbb{R}$, has polynomial growth, and satisfies $v_{a^*, b^*}'\geq -C_U$ by Lemma \ref{Lemma: convexity}. By Corollary \ref{corollary_generator} and Lemma \ref{Lemma: generator M}, (1) $(\mathcal{L}-q)v_{a^*, b^*}(x) + r(\mathcal{M}v_{a^*, b^*}(x) - v_{a^*, b^*}(x)) + f(x) = \tilde{f}(x)-\tilde{f}(a^*) \geq 0$ for $x \leq a^*$, where the inequality holds because $a^* < \bar{a}$ and by Assumption \ref{asm: on f}(2), (2) $(\mathcal{L}-q)v_{a^*, b^*}(x) + r(\mathcal{M}v_{a^*, b^*}(x) - v_{a^*, b^*}(x)) + f(x) = 0$ for $x > a^*$. Hence,
    \begin{equation*}
        (\mathcal{L}-q)v_{a^*, b^*}(x) + r(\mathcal{M}v_{a^*, b^*}(x) - v_{a^*, b^*}(x)) + f(x) \geq 0, \quad x\in \mathbb{R}. 
    \end{equation*}
    Since $v_{a^*,b^*}$ is the NPV of costs under the admissible strategy $\pi_{a^*, b^*}$, by applying Lemma 
    \ref{Lemma: verification lemma}, we have that $v_{a^*, b^*}(x) = v(x)$ for all $x \in \mathbb{R}$.
\end{proof}
\section{Numerical example} \label{Sect: numerical result}
In this section, we consider a specific problem and determine the optimal double-barrier strategy numerically. Let $f(x) = x^2$, and $X$ be a spectrally negative L\'evy process with exponential jumps given by
\begin{equation*}
    X(t) = x + t + B(t) + \sum^{N(t)}_{n = 1} Z_n,   
\end{equation*}
where $B = (B(t); t\geq 0)$ is a standard Brownian motion, $N = (N(t); t\geq 0)$ is a Poisson process with arrival rate $\lambda = 0.2$, and $\{Z_n\}_{n \geq 1}$ is a sequence of i.i.d. exponential random variables with rate $1$. By Example 3.1 of \cite{egami_phase-type_2014}, the $q$-scale function is
\begin{equation}\label{Eq: W numerical}
    W^{(q)}(x) = \frac{e^{\Phi_q x}}{\psi'(\Phi_q)} - \sum^{2}_{i = 1} B_{i, q} e^{-\xi_{i, q} x}, 
\end{equation}
where, for all $i =1, 2$, we have $B_{i, q} = -1/\psi'(-\xi_{i, q})$, with $\xi_{i, q}$ defined such that $\psi(-\xi_{i, q}) = q$ and $-\xi_{i, q} < 0$. Furthermore, we set $q = 0.05$, $C_U = 200$, and $C_D = 200$. The Poisson process modeling the downward control opportunities, denoted by $N^r$, has a rate of $r$. 

Figure \ref{Fig: numerical ex} shows the value function $v_{a^*, b^*}$ alongside the NPV of costs $v_{a, b}$ for various barrier strategies where $(a, b) \neq (a^*, b^*)$, with the rate parameter for $N^r$ (the Poisson process modeling the downward control opportunities) set to $r = 0.1$. It is clear that $v_{a^*, b^*}(x) \leq v_{a, b}(x)$ for all $x \in \mathbb{R}$ when $(a, b) \neq (a^*, b^*)$.

\begin{figure}[h]
\centering
\begin{subfigure}[b]{0.49\textwidth}
    \centering
    \includegraphics[width=\textwidth]{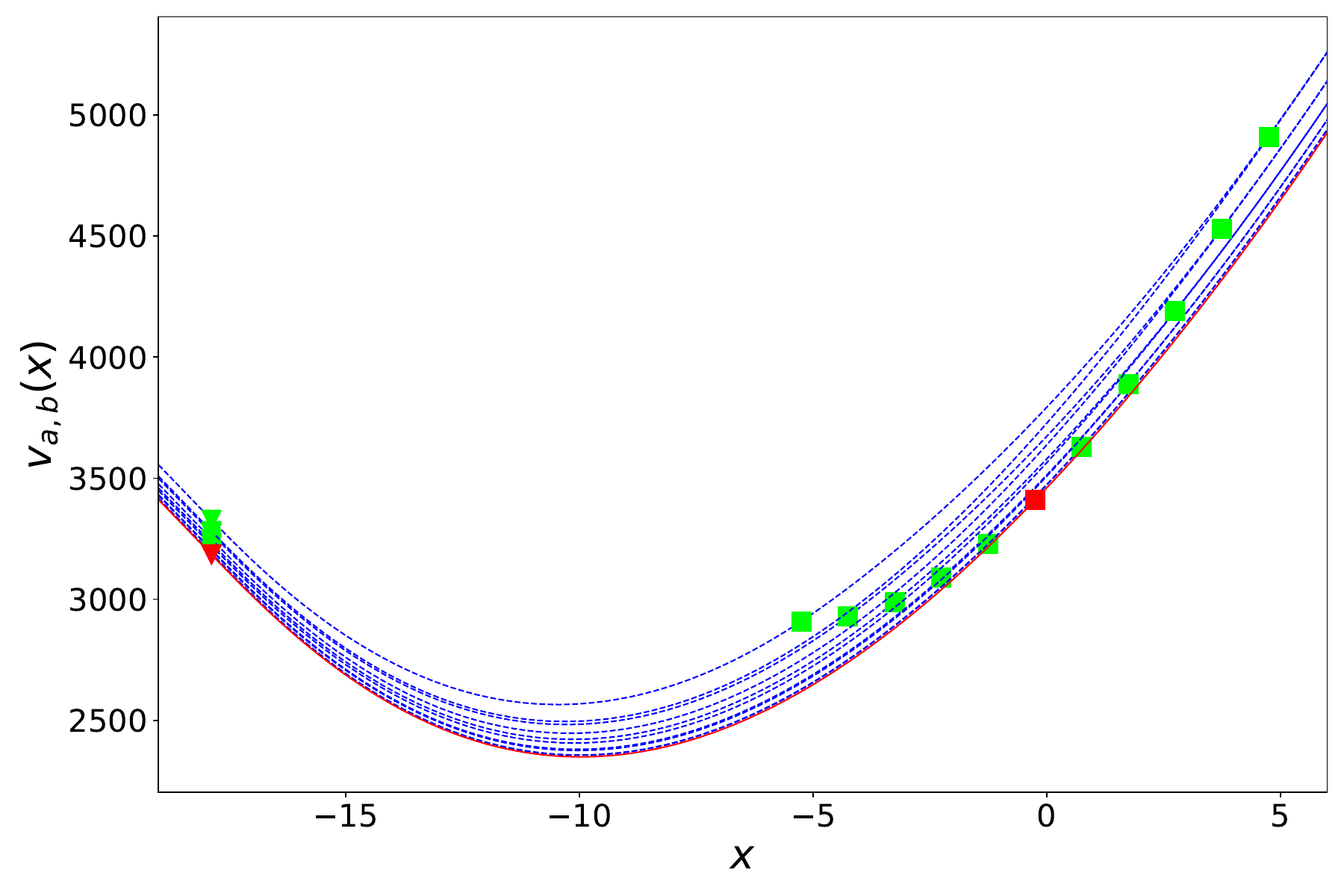}
\end{subfigure}
\vspace{1em}
\begin{subfigure}[b]{0.49\textwidth}
    \centering
    \includegraphics[width=\textwidth]{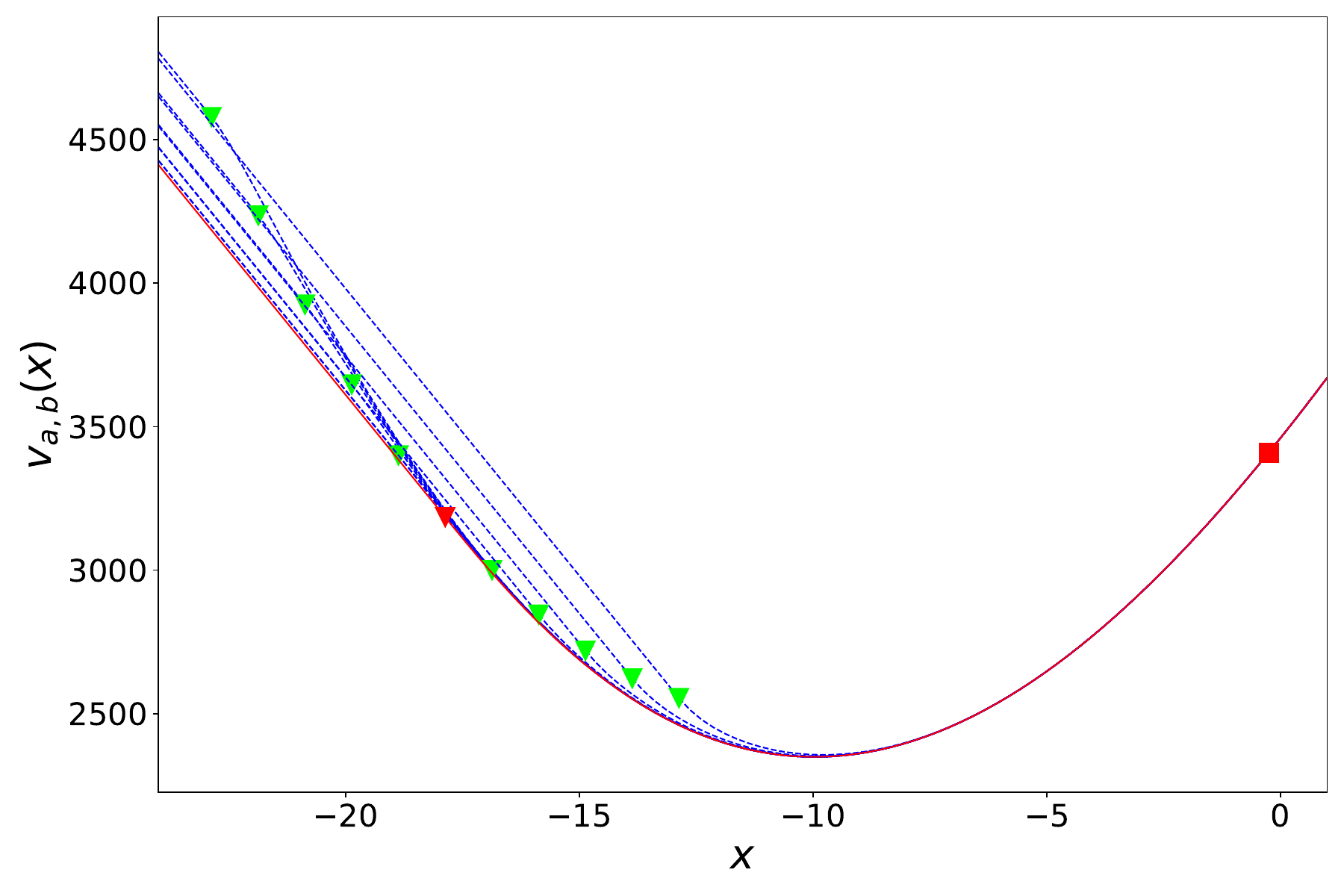}
\end{subfigure}

\caption{Plots of $v_{a, b}$ (for $r = 0.1$) against initial position $x$. Left: Plot of $v_{a^*, b}$ (blue dashed) for $b = b^* - 5, b^* - 4, \dots, b^* - 1, b^* + 1, \dots, b^* + 5$, with $(a^*, v_{a^*, b}(a^*))$ (lime triangle), $(a^*, v_{a^*, b^*}(a^*))$ (red triangle), $(b, v_{a^*, b}(b))$ (lime square), and $(b^*, v_{a^*, b^*}(b^*))$ (red square). Right: Plot of $v_{a, b^*}$ (blue dashed) for $a = a^* - 5, a^* - 4, \dots, a^* - 1, a^* + 1, \dots, a^* + 5$, with $(a, v_{a, b^*}(a))$ (lime triangle), $(a^*, v_{a^*, b^*}(a^*))$ (red triangle), $(b^*, v_{a, b^*}(b^*))$ (lime square), and $(b^*, v_{a^*, b^*}(b^*))$ (red square). In both plots, the value function $v_{a^*, b^*}$ is indicated by red curves.}
\label{Fig: numerical ex}
\end{figure}

\begin{figure}[h]
\centering
\begin{subfigure}[b]{0.49\textwidth}
    \centering
    \includegraphics[width=\textwidth]{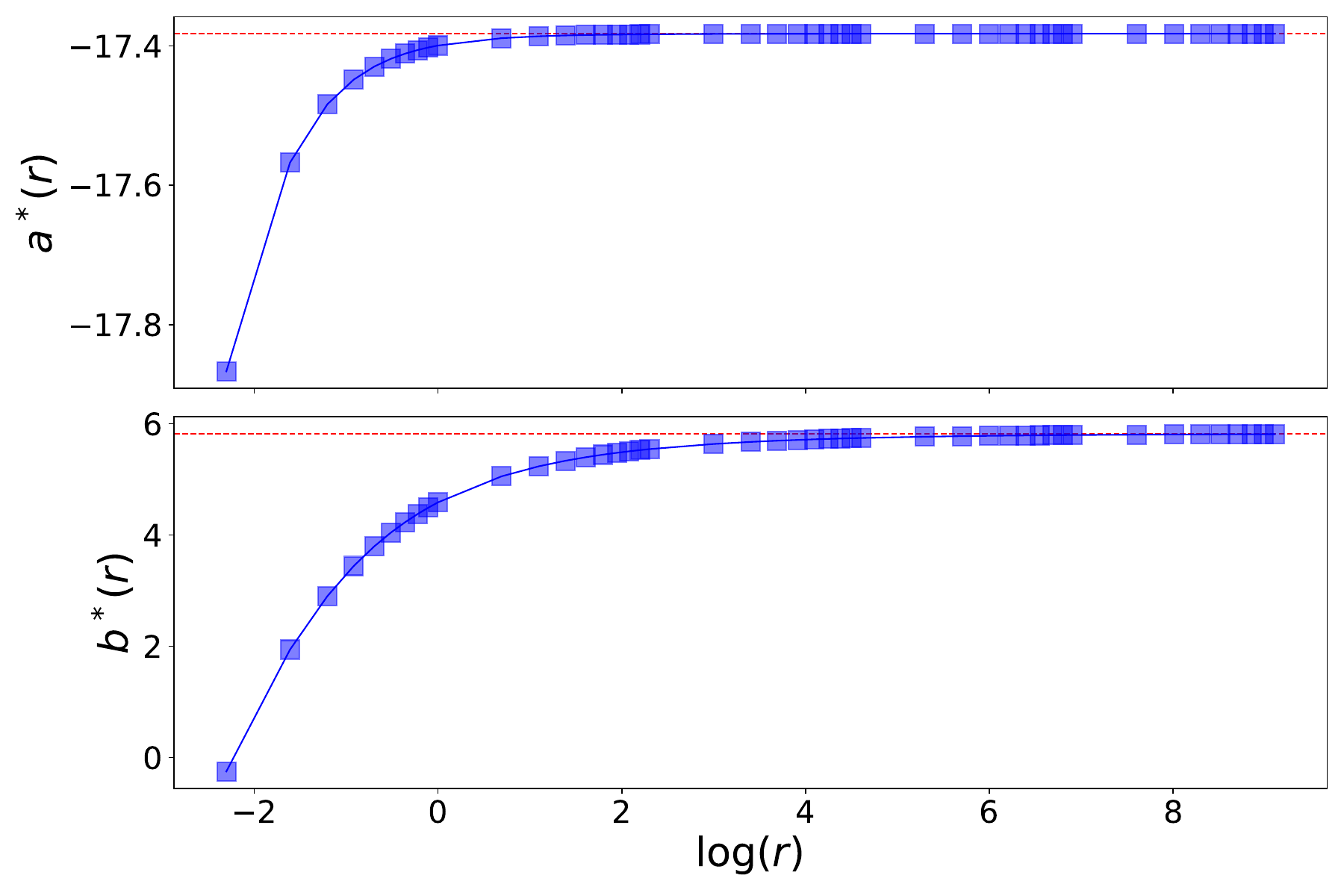}
\end{subfigure}
\begin{subfigure}[b]{0.49\textwidth}
    \centering
    \includegraphics[width=\textwidth]{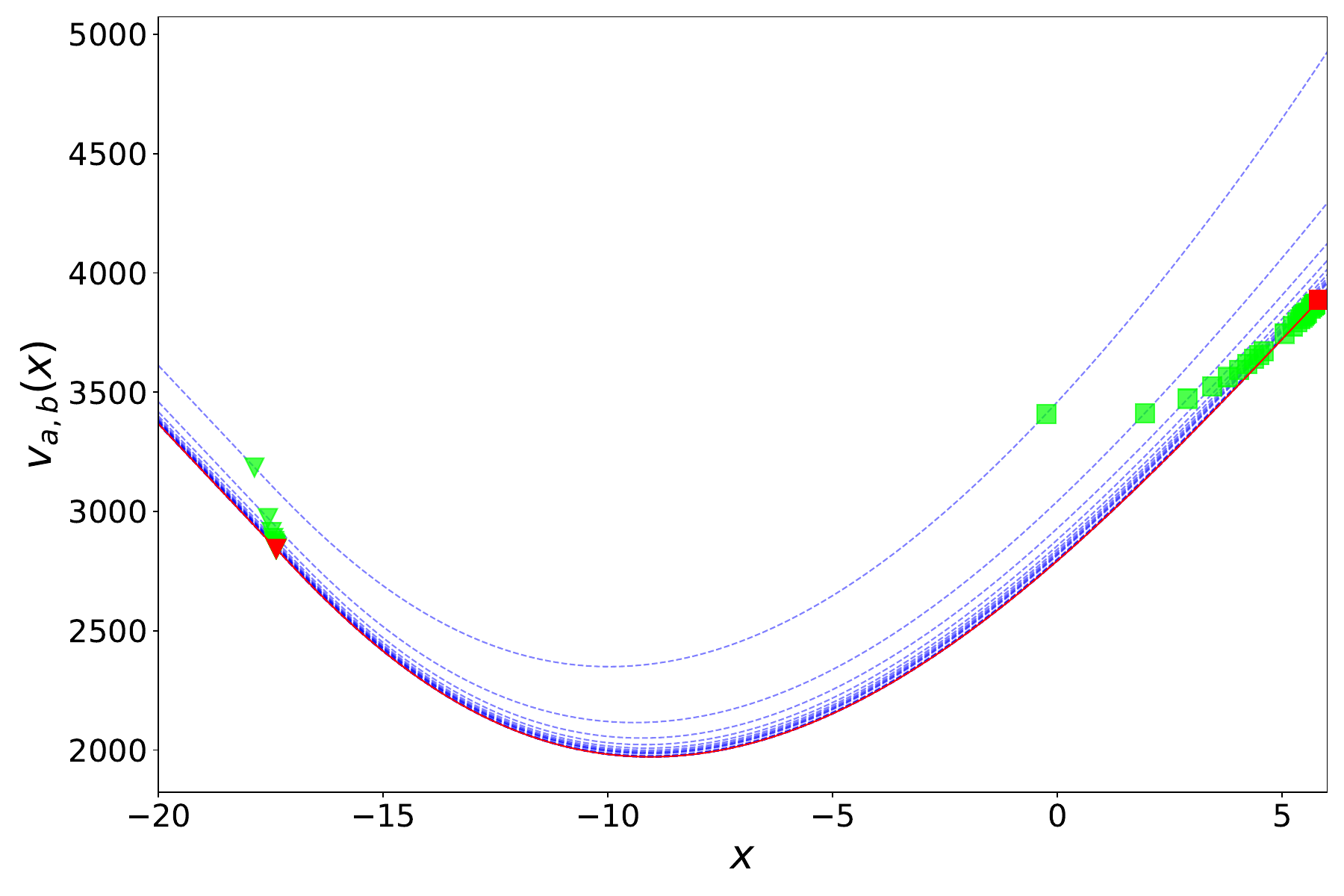}
\end{subfigure}
\caption{Left: Plot of periodic-classical barriers $(a^*, b^*)$ for $r = 0.1, 0.2, \dots, 0.9, 1, 2, \dots, 9, 10, 20, \break\dots, 90, 100, 200, \dots, 900$ (squares) and classical barriers (red dashed lines). Right: Plot of $v_{a^*, b^*}$ (blue dashed) for $r = 0.1, 0.2, \dots, 0.9, 1, 2, \dots, 9, 10, 20, \dots, 90$, with $(a^*, v_{a^*, b^*}(a^*))$ (lime triangles) and $(b^*, v_{a^*, b^*}(b^*))$ (lime squares), alongside the classical value function (red solid line), with lower and upper barriers marked by a red triangle and square, respectively.}
\label{Fig: limit r}
\end{figure}

The graph on the left in Figure \ref{Fig: limit r} shows the optimal periodic-classical barriers $a^*$ and $b^*$ as functions of arrival rates. These are compared with the classical barriers, which are the optimal barriers of the problem studied in \cite{baurdoux_optimality_2015}, the continuous-monitoring version of our current problem. As the arrival rate $r$ increases, both $a^*$ and $b^*$ converge to the classical barriers as obtained in \cite{baurdoux_optimality_2015}, indicating the convergence to the classical strategy.

Similarly, the graph on the right in Figure \ref{Fig: limit r} shows the value functions $v_{a^*, b^*}$ corresponding to various $r$ values, alongside the classical value function computed in \cite{baurdoux_optimality_2015}. As $r$ increases, $v_{a^*, b^*}$ decreases and converges pointwise to the classical value function.

\appendix

\section{Relaxation of Assumption \ref{asm: on f}(3)}\label{Sect: assumption on f (3)}
By Theorem \ref{Thm: optimal strategy}, when Assumption \ref{asm: on f} is satisfied, the double-barrier strategy with barriers $(a^*, b^*)$ is optimal, where $(a^*, b^*)$ is chosen such that $\mathfrak{C}$ holds. The finiteness of $b^*$ implies that activating the downward control is optimal. In cases where only Assumptions \ref{asm: on f}(1)--(2) hold and (3) is violated, the following result shows that not activating the downward control is optimal, and an optimal strategy is of the single-barrier type. 
\begin{proposition}\label{Prop: existence prop converse}
    Suppose Assumption \ref{asm: on f}(3) fails to hold (thus $f' \leq qC_D$ a.e. on $\mathbb{R}$) and all other standing assumptions including Assumptions \ref{asm: on f}(1)--(2) hold. Then, (i) not activating the downward control $(L^\pi(t); t \geq 0)$ is optimal; (ii) $v_{\underline{a}_2, \infty}(x) = \inf_{\pi \in \Pi} v^\pi(x)$ for all $x \in \mathbb{R}$, where $\underline{a}_2$ is defined in \eqref{Eq: a_underline}. 
\end{proposition}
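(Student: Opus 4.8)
\textbf{Proof proposal for Proposition \ref{Prop: existence prop converse}.}

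The plan is to reduce the problem to the purely one-sided (upward-control only) control problem and invoke its known solution. When Assumption \ref{asm: on f}(3) fails, $f'(x) \le qC_D$ for a.e.\ $x$, equivalently $\tilde f'(x) - (C_U+C_D)q \le -qC_U$ hmm, more directly $f'(x) - qC_D \le 0$ everywhere, so $\mathcal{M}h(x) = \inf_{l\ge 0}\{C_D l + h(x-l)\}$ applied to a convex $h$ whose derivative stays below $C_D$ on all of $\mathbb{R}$ will have the infimum attained at $l = 0$, i.e.\ $\mathcal{M}h = h$, so the periodic downward-control term $r(\mathcal{M}w - w)$ in the verification lemma vanishes identically. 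The first step, then, is to record that for $w = v_{\underline a_2,\infty}$, which is the NPV of the classical reflected (upward only) strategy with barrier $\underline a_2$, convexity holds (by the same argument as Lemma \ref{Lemma: convexity}(2), using $v^{f'}_{\underline a_2,\infty}(x) = \mathbb{E}_x[\int_0^\infty e^{-qt}f'(Y^{\underline a_2,\infty}(t))\,\diff t]$ monotone in $x$ via monotonicity of $Y^{\underline a_2,\infty}$ in the starting point and convexity of $f$), and $(v_{\underline a_2,\infty})'(x) = v^{f'}_{\underline a_2,\infty}(x) \le v^{f'}_{\underline a_2,\infty}(\infty) = \frac{\Phi_q}{q}\int_0^\infty e^{-\Phi_q y} f'(y+\underline a_2)\,\diff y \cdot$ hmm, one shows $(v_{\underline a_2,\infty})'(x) \uparrow$ stays $\le C_D$; indeed since $f' \le qC_D$ and $q\int_0^\infty e^{-qt}\,\diff t = 1$ one gets $v^{f'}_{\underline a_2,\infty}(x) \le C_D$ directly from the probabilistic representation. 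Combined with $(v_{\underline a_2,\infty})'(x) = -C_U$ for $x \le \underline a_2$ (by Remark \ref{Remark: a_2} and the smooth-fit structure of the one-sided problem), we get $-C_U \le (v_{\underline a_2,\infty})' \le C_D$, hence $\mathcal{M}v_{\underline a_2,\infty} = v_{\underline a_2,\infty}$ on $\mathbb{R}$.

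Next I would verify the HJB-type inequality of Lemma \ref{Lemma: verification lemma} with $w = v_{\underline a_2,\infty}$. Since the downward term vanishes, the inequality reduces to $(\mathcal{L}-q)v_{\underline a_2,\infty}(x) + f(x) \ge 0$ for all $x$. This is exactly the content of the verification for the one-sided reflected problem (cf.\ Section 7 of \cite{yamazaki_inventory_2017} and the analogue of Lemma \ref{Lemma: generator M} specialised to $b = \infty$): one has $(\mathcal{L}-q)v_{\underline a_2,\infty}(x) + f(x) = 0$ for $x > \underline a_2$ and $= \tilde f(x) - \tilde f(\underline a_2) \ge 0$ for $x \le \underline a_2$, the latter because $\underline a_2 < \bar a$ (established right after \eqref{Eq: a_underline}) so $\tilde f$ is decreasing on $(-\infty,\bar a) \supseteq (-\infty,\underline a_2]$, forcing $\tilde f(x) \ge \tilde f(\underline a_2)$ there. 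I would derive this generator identity by the same It\^o/excursion computation as in Appendix \ref{Sect: proof of verification generator}, or simply cite that the $b \to \infty$ limit of the formulas in Section 3 yields it. Polynomial growth of $v_{\underline a_2,\infty}$ follows as in Lemma 4.7 of \cite{perez_optimal_2020}, and the transversality condition $\limsup_{t,n}\mathbb{E}_x[e^{-q(t\wedge\tau_n)}v_{\underline a_2,\infty}(Y^\pi(t\wedge\tau_n))] \le 0$ follows verbatim from the argument of Lemma \ref{Lemma: verification limit} (which only used polynomial growth and the growth assumptions on $X$, not the finiteness of $b^*$).

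Finally, applying Lemma \ref{Lemma: verification lemma} gives $v_{\underline a_2,\infty}(x) = \inf_{\pi\in\Pi} v^\pi(x)$, which is claim (ii); and since $\pi_{\underline a_2,\infty}$ by construction never activates $L^\pi$, claim (i) follows. The main obstacle I anticipate is making the first step fully rigorous under the \emph{weak} failure of (3): when $f' \le qC_D$ with equality possible on a set of positive measure (e.g.\ $f$ affine with slope $qC_D$ on a half-line), one must check that $v^{f'}_{\underline a_2,\infty}(x) \le C_D$ remains true and, more delicately, that $\mathcal{M}v_{\underline a_2,\infty} = v_{\underline a_2,\infty}$ still holds at points where the derivative hits $C_D$ — here convexity of $v_{\underline a_2,\infty}$ guarantees $C_D l + v_{\underline a_2,\infty}(x-l) - v_{\underline a_2,\infty}(x) = \int_{x-l}^x (C_D - v'_{\underline a_2,\infty}(y))\,\diff y \ge 0$ for all $l \ge 0$, so the infimum is attained at $l=0$ regardless. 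A secondary subtlety is confirming $\underline a_2 \in \mathbb{R}$ under Assumptions (1)–(2) alone, but this is already in the text: $\underline a_2 \in (-\infty,\bar a)$ was shown using only monotone convergence, convexity of $\tilde f$, and $f'(-\infty) < -qC_U$ (Remark \ref{Remark: polynomial growth}), none of which invoke (3).
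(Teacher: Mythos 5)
Your argument is correct, but it takes a genuinely different route from the paper's. The paper dispatches this result in two sentences: for (i), it uses a direct pathwise comparison --- given any admissible $\pi$, dropping all downward control yields a process $Y^{\pi'} = Y^\pi + L^\pi$, and since $f$ is convex with $f' \le qC_D$ one has $f(Y^{\pi'}(t)) - f(Y^\pi(t)) \le qC_D L^\pi(t)$, which after discounting and integration by parts is dominated by the foregone control term $C_D\int e^{-qt}\,\diff L^\pi(t)$, so $v^{\pi'} \le v^\pi$; for (ii), it then simply cites Theorem 2.1 of \cite{baurdoux_optimality_2015}, since once downward control is removed the problem is the classical one-sided reflection problem. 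You instead re-run the verification machinery (Lemma \ref{Lemma: verification lemma}) with $w = v_{\underline a_2,\infty}$ from scratch: observe that $w$ is convex with $-C_U \le w' \le C_D$ so that $\mathcal{M}w = w$ and the periodic term in the HJB vanishes; check $(\mathcal{L}-q)w + f \ge 0$ via the $b=\infty$ analogue of Lemma \ref{Lemma: generator M}; reuse the polynomial-growth and transversality arguments; and then recover (i) as a corollary of (ii) rather than a prerequisite. Both routes are sound. The paper's is shorter and leans on the existing one-sided literature; yours is self-contained within the paper's own framework, which is arguably cleaner given that the verification lemma and its ingredients are already developed in full. The only gaps in your sketch are small: you rely on the $b=\infty$ specialisations of Lemma \ref{Lemma: convexity}(1), Lemma \ref{Lemma: smoothness remark}, and Lemma \ref{Lemma: generator M} (sufficient smoothness of $v_{\underline a_2,\infty}$ and the identity $v'_{\underline a_2,\infty} = v^{f'}_{\underline a_2,\infty}$) without deriving them, though as you say they do follow from Remark \ref{Remark: a_2} by the same computations with $b\to\infty$. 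Your observation that convexity alone forces $\mathcal{M}w = w$ even when $w'$ attains $C_D$ on a set of positive measure is a nice point that the paper does not need to address under its more direct argument.
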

\begin{proof}
    (i) holds since the downward control cost (resp., reward) is no less than (resp., no greater than) the reduction (resp., increase) in inventory cost. (ii) holds due to (i) and Theorem 2.1 of \cite{baurdoux_optimality_2015}.
\end{proof}

\section{Proofs}\label{Sect: proofs}
The following results are used for the proofs of Lemmas \ref{Lemma: inventory cost} and  \ref{Lemma: auxiliary result}.
\begin{proposition}[Proposition 5.1 of \cite{mata_bailout_2023}]\label{Prop: inventory cost computation}
    Suppose $h$ is a positive, bounded, measurable function on $\mathbb{R}$ with compact support. We have for $a < b$ and $x \in \mathbb{R}$,
    \begin{multline}\label{Eq: v_a, b at x}
        v_{a, b}^{h}(x) \coloneqq \mathbb{E}_x\left[\int^\infty_0e^{-qt} h(Y^{a, b}(t))\, \diff t\right] = \frac{\rho_{a, b}^{(q)}(b; h) + v_{a, b}^{h}(b)}{Z^{(q)}(b - a)} Z^{(q, r)}_{a, b}(x) - \rho_{a, b}^{(q, r)}(x; h)\\
        -r\overline{W}^{(q + r)}(x - b)v_{a, b}^{h}(b) - \int^{x}_b W^{(q + r)}(x - y)h(y)\, \diff y,
    \end{multline}
    where 
    \begin{multline}\label{Eq: v_a, b at b}
        v_{a, b}^{h}(b) = -\rho_{a, b}^{(q)}(b; h)\\
        + \left(\frac{q}{\Phi_{q + r}} \frac{Z^{(q)}(b - a; \Phi_{q + r})}{Z^{(q)}(b - a)}\right)^{-1}\left(-\frac{r}{\Phi_{q + r}}\rho_{a, b}^{(q)}(b; h) + \int^{\infty}_a h(y) Z^{(q)}(b - y, \Phi_{q + r})\, \diff y\right).
    \end{multline}
\end{proposition}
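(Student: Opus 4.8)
The plan is to exploit the regeneration of $Y^{a,b}$ at the Poisson arrival times $\mathcal{T}_r$, together with the absence of positive jumps, to reduce \eqref{Eq: v_a, b at x}--\eqref{Eq: v_a, b at b} to classical scale-function identities for the continuously reflected process $Y^{a,\infty}$ (the spectrally negative L\'evy process reflected at $a$ from below). Write $T\coloneqq T(1)\sim\mathrm{Exp}(r)$, independent of $X$. On $[0,T)$ no downward control has yet been exercised, so $Y^{a,b}$ coincides with $Y^{a,\infty}$; at $T$ the process is relocated to $Y^{a,\infty}(T)\wedge b$ and, by the strong Markov property and the memorylessness of the exponential clock, restarts as a fresh copy of $Y^{a,b}$. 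Hence, for $x\ge a$,
\begin{equation*}
 v_{a,b}^{h}(x)=\mathbb{E}_x\Big[\int_0^{T}e^{-qt}h(Y^{a,\infty}(t))\,\diff t\Big]+\mathbb{E}_x\big[e^{-qT}v_{a,b}^{h}(Y^{a,\infty}(T)\wedge b)\big].
\end{equation*}
Integrating out the clock turns the first term into the $(q+r)$-resolvent of $Y^{a,\infty}$ applied to $h$ and the second into $r$ times the $(q+r)$-resolvent of $Y^{a,\infty}$ applied to $y\mapsto v_{a,b}^{h}(y\wedge b)$; since $h$ is bounded with compact support every expectation is finite and the Tonelli interchanges are immediate.

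To identify $v_{a,b}^{h}$ on $[a,b]$ I would use a first-passage decomposition: for $a\le x\le b$ the controlled process agrees with $Y^{a,\infty}$ up to $\kappa_b^{+}\coloneqq\inf\{t\ge0:Y^{a,\infty}(t)>b\}$, and as $X$ has no positive jumps it creeps upward, so $Y^{a,\infty}(\kappa_b^{+})=b$ on $\{\kappa_b^{+}<\infty\}$. The strong Markov property then gives
\begin{equation*}
 v_{a,b}^{h}(x)=\mathbb{E}_x\Big[\int_0^{\kappa_b^{+}}e^{-qt}h(Y^{a,\infty}(t))\,\diff t\Big]+\mathbb{E}_x\big[e^{-q\kappa_b^{+}}\big]\,v_{a,b}^{h}(b),\qquad a\le x\le b.
\end{equation*}
Both ingredients---the $q$-resolvent of the reflected-at-$a$ process killed on entering $(b,\infty)$ and the first-passage transform $\mathbb{E}_x[e^{-q\kappa_b^{+}}]$---are classical and are expressed through $W^{(q)}(\cdot-a)$, $Z^{(q)}(\cdot-a)$ and $\rho_{a,b}^{(q)}(\cdot;h)$. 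This presents $v_{a,b}^{h}$ on $[a,b]$ as an affine function of the single constant $v_{a,b}^{h}(b)$.

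It then remains to pin down $v_{a,b}^{h}(b)$ and handle $x>b$. For $x\ge b$ I would run the process until $\eta\coloneqq T\wedge\kappa_b^{-}$, where $\kappa_b^{-}$ is the first passage of $Y^{a,\infty}$ below $b$: on $\{T<\kappa_b^{-}\}$ the process is pushed to $b$ and regenerates, contributing $v_{a,b}^{h}(b)$; on $\{\kappa_b^{-}<T\}$ it lands at a point of $[a,b]$ (a downward jump, possibly followed by reflection at $a$), contributing $v_{a,b}^{h}$ of that point, which is known from the previous step. Because the excursion above $b$ before $\eta$ is the reflected process run against an independent exponential clock of rate $r$, every quantity produced there carries the discount $q+r$---this is exactly why $W^{(q+r)}$ and $\overline{W}^{(q+r)}$ appear in \eqref{Eq: rho_r}--\eqref{Eq: bar Z_a, b}. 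Evaluating the identity of the first paragraph at $x=b$, splitting the post-$T$ term at level $b$, and substituting the affine expression of the second paragraph on $\{Y^{a,\infty}(T)\le b\}$ yields one linear equation for $v_{a,b}^{h}(b)$ (its coefficient is $\le\mathbb{E}_b[e^{-qT}]=r/(q+r)<1$, so it is solvable), and its solution is \eqref{Eq: v_a, b at b}. Feeding this back, expanding the $W^{(q+r)}$-convolutions over $(b,x)$, and recognising the concatenation patterns that define $\rho_{a,b}^{(q,r)}$, $Z_{a,b}^{(q,r)}$, $\overline{Z}_{a,b}^{(q,r)}$ and $W_{a,b}^{(q,r)}$ in \eqref{Eq: rho}--\eqref{Eq: bar Z_a, b} gives \eqref{Eq: v_a, b at x}.

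The probabilistic skeleton is routine; the main obstacle is the scale-function bookkeeping in this last step---substituting the reflected-process resolvent and first-passage transforms, performing the $W^{(q+r)}$-convolutions on $(b,\infty)$, and collapsing the iterated integrals into the compact $\rho_{a,b}^{(q,r)}$, $Z_{a,b}^{(q,r)}$, $\overline{Z}_{a,b}^{(q,r)}$, $W_{a,b}^{(q,r)}$ forms, using the smoothness of $W^{(q)}$ from Remark \ref{remark_smoothness_scale_function} to license the differentiations and integrations by parts. An alternative that sidesteps most of the algebra is to \emph{guess} the right-hand sides of \eqref{Eq: v_a, b at x}--\eqref{Eq: v_a, b at b} and \emph{verify} them by applying It\^o's formula to $t\mapsto e^{-qt}\phi(Y^{a,b}(t))$ for the candidate $\phi$: one checks the integro-differential identity $(\mathcal{L}-q)\phi(x)+r(\phi(x\wedge b)-\phi(x))+h(x)=0$ off the barriers, that the reflection local time at $a$ makes no contribution (forcing a $C^1$-fit of $\phi$ at $a$, consistent with Remark \ref{remark_smoothness_scale_function}(3)), and then takes expectations and lets $t\to\infty$; this replaces the combinatorial work by a smoothness and boundary-behaviour check.
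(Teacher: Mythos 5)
The paper does not prove Proposition \ref{Prop: inventory cost computation}; it is imported verbatim as Proposition 5.1 of \cite{mata_bailout_2023}, so there is no in-paper argument to compare against. Judged on its own merits, your probabilistic skeleton is sound and is indeed the standard route for identities of this type. The three decompositions are correct: (i) the one-step decomposition at $T\coloneqq T(1)\sim\mathrm{Exp}(r)$, which by independence converts the first term into the $(q+r)$-resolvent of $Y^{a,\infty}$; (ii) on $[a,b]$ the decomposition at $\kappa_b^+$, where, using that $Y^{a,b}=Y^{a,\infty}$ before the upcrossing and $Y^{a,\infty}$ creeps onto $b$, the known identities $\mathbb{E}_x[e^{-q\kappa_b^+}]=Z^{(q)}(x-a)/Z^{(q)}(b-a)$ and the killed resolvent density give $v_{a,b}^h(x)=\frac{Z^{(q)}(x-a)}{Z^{(q)}(b-a)}\bigl(\rho^{(q)}_{a,b}(b;h)+v^h_{a,b}(b)\bigr)-\rho^{(q)}_{a,b}(x;h)$, which is exactly \eqref{Eq: v_a, b at x} restricted to $a\le x\le b$; and (iii) for $x>b$ the decomposition at $T\wedge\kappa_b^-$, where the effective $(q+r)$ discount and the one-sided exit and Gerber--Shiu identities for $X$ killed at $\kappa_b^-$ produce the $W^{(q+r)}$-convolutions over $(b,x)$. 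Plugging (ii) into (i) at $x=b$ and observing the coefficient bound $\le\mathbb{E}_b[e^{-qT}]=r/(q+r)<1$ pins down $v_{a,b}^h(b)$, and the remaining work is the bookkeeping that collapses the iterated integrals into $\rho^{(q,r)}_{a,b}$, $Z^{(q,r)}_{a,b}$, etc.; you correctly flag that this is where the real effort lies. One caveat on the alternative It\^o verification you sketch: the proposition is stated for arbitrary bounded measurable $h$, under which $v_{a,b}^h$ need not be $C^1$ at $a$ in the bounded-variation case (the right derivative at $a$ picks up a $W^{(q)}(0)\ne 0$ contribution), so the ``$C^1$-fit at $a$'' you invoke is not automatic and the It\^o route would require additional regularity on $h$ or a Meyer--It\^o/approximation argument. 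The probabilistic route you lead with does not suffer from this and is the one to use.
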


\begin{corollary}\label{Corollary: inventory cost computation}
    Proposition \ref{Prop: inventory cost computation} remains valid for $h: \mathbb{R} \to \mathbb{R}$ that satisfies Assumption \ref{asm: on f}(1) and $X$ satisfying Assumption \ref{asm: on X}. 
\end{corollary}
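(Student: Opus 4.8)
\textbf{Proof plan for Corollary \ref{Corollary: inventory cost computation}.}

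The plan is to reduce the general case to the already-established Proposition \ref{Prop: inventory cost computation} by a truncation-and-approximation argument, exploiting the fact that under Assumptions \ref{asm: on f}(1) and \ref{asm: on X} the function $h$ has at most polynomial growth (Remark \ref{Remark: polynomial growth}) while the controlled process $Y^{a,b}$ has exponential moments under Assumption \ref{asm: on X}, so that $v^h_{a,b}(x)$ and all the scale-function expressions appearing on the right-hand side of \eqref{Eq: v_a, b at x}--\eqref{Eq: v_a, b at b} are finite and well-defined. First I would write $h = h^+ - h^-$ and, for each of the two nonnegative pieces, set $h_n \coloneqq (h^{\pm} \wedge n)\,\mathbf{1}_{[-n,n]}$, a bounded nonnegative measurable function with compact support to which Proposition \ref{Prop: inventory cost computation} applies verbatim, yielding \eqref{Eq: v_a, b at x} and \eqref{Eq: v_a, b at b} with $h$ replaced by $h_n$.

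Next I would pass to the limit $n \to \infty$ on both sides. On the left, $v^{h_n}_{a,b}(x) \to v^{h^\pm}_{a,b}(x)$ by monotone convergence, since $0 \le h_n \uparrow h^{\pm}$ pointwise and $\mathbb{E}_x[\int_0^\infty e^{-qt} h^\pm(Y^{a,b}(t))\,\diff t] < \infty$; the finiteness of this expectation is exactly where the polynomial-growth bound on $h$ from Remark \ref{Remark: polynomial growth} combines with the exponential-moment Assumption \ref{asm: on X} (together with the fact that $Y^{a,b}$ is bounded below by $a$ and dominated above by the reflected-from-below process, whose exponential moments are controlled by $\psi$). On the right, each term is an integral of $h_n$ against a fixed, locally integrable scale-function kernel over a region; for the terms with compact integration domain ($\rho^{(q)}_{a,b}$, $\rho^{(q,r)}_{a,b}$, and $\int_b^x W^{(q+r)}(x-y)h_n(y)\,\diff y$) dominated convergence applies once $n$ exceeds the diameter of the domain, and for the term $\int_a^\infty h_n(y)Z^{(q)}(b-y,\Phi_{q+r})\,\diff y$ in \eqref{Eq: v_a, b at b} one uses that $Z^{(q)}(b-y,\Phi_{q+r})$ decays exponentially as $y\to\infty$ (by the second representation in \eqref{Eq: scale function Z} together with $\Phi_{q+r}>\Phi_q$ and the growth rate of $W^{(q)}$) to dominate the polynomially growing $h_n$ and invoke dominated convergence there too. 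Adding the identities for $h^+$ and $h^-$ recovers \eqref{Eq: v_a, b at x}--\eqref{Eq: v_a, b at b} for $h$, by linearity of both sides in $h$.

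The main obstacle is the justification of finiteness and of the interchange of limit and integral for the unbounded-domain term $\int_a^\infty h(y) Z^{(q)}(b-y,\Phi_{q+r})\,\diff y$ in \eqref{Eq: v_a, b at b} and, correspondingly, the finiteness of $v^{h}_{a,b}(b)$ and $v^{h}_{a,b}(x)$ themselves; this is precisely the role of the integrability hypotheses. I would record the needed tail estimate as a short computation: $Z^{(q)}(b-y,\Phi_{q+r}) = r\int_0^\infty e^{-\Phi_{q+r}z} W^{(q)}(z+b-y)\,\diff z$ and $W^{(q)}(u) = O(e^{\Phi_q u})$ as $u\to\infty$, so $Z^{(q)}(b-y,\Phi_{q+r}) = O(e^{\Phi_q(b-y)})$ as $y\to\infty$; since $|h(y)|$ grows at most polynomially and $\Phi_q>0$, the integrand is integrable at $+\infty$. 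Everything else — linearity, the monotone/dominated convergence steps on compact domains, and the reassembly $h=h^+-h^-$ — is routine, so I would state those briefly and concentrate the write-up on the tail bound and the exponential-moment estimate for $Y^{a,b}$.
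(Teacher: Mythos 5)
Your plan is correct and matches the paper's proof in substance. The paper's own argument is exactly the decomposition $h=h^+-h^-$ together with a truncation/limit argument, and the key finiteness facts are precisely the ones you identify: $h$ has at most polynomial growth (Remark \ref{Remark: polynomial growth}), the controlled process is sandwiched as $a \leq Y^{a,b}(t) \leq Y^{a,\infty}(t)$, and the classical reflected process $Y^{a,\infty}$ has exponential moments under Assumption \ref{asm: on X}, so $\mathbb{E}_x[\int_0^\infty e^{-qt}h^\pm(Y^{a,b}(t))\,\diff t]<\infty$. You go further than the paper in making the truncating sequence $h_n$ explicit and in verifying the right-hand-side convergence term by term, in particular supplying the tail estimate for $\int_a^\infty h(y)Z^{(q)}(b-y,\Phi_{q+r})\,\diff y$. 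That estimate is fine as written; note that for $y>b$ one actually has the clean identity $Z^{(q)}(b-y,\Phi_{q+r}) = e^{\Phi_{q+r}(b-y)}$ directly from \eqref{Eq: scale function Z}, giving decay at rate $\Phi_{q+r}$ rather than the weaker $\Phi_q$, which is sufficient for the argument and slightly simpler than appealing to the growth of $W^{(q)}$. The paper compresses all of this into "dominated convergence," so your version is a fully spelled-out instance of the same proof rather than a genuinely different route.
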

\begin{proof}
    Let $h^+ \coloneqq \max(h, 0)$ and $h^- \coloneqq \max(-h, 0)$ be the positive and negative parts of $h$, respectively. The positivity and boundedness assumptions on $h$ can be relaxed through the decomposition $h = h^+ - h^-$ and dominated convergence, by noting that $\mathbb{E}_x\left[\int^\infty_0 e^{-qt} h^+(Y^{a, b}(t))\, \diff t\right] < \infty$ and $\mathbb{E}_x\left[\int^\infty_0 e^{-qt} h^-(Y^{a, b}(t))\, \diff t\right] < \infty$. Indeed, the finiteness of these expectations holds because $h$ is of at most polynomial growth and $a \leq Y^{a,b}(t) \leq Y^{a,\infty}(t)$, where $Y^{a, \infty}$ is the classical reflected process from below at $a$ of the spectrally negative \lev process $X$, for which it is known that $\mathbb{E}_x\left[\int^\infty_0 e^{-qt + \theta Y^{a, \infty}(t)}\, \diff t\right] < \infty$ for any $\theta \geq 0$.
\end{proof}

\subsection{Proof of Lemma \ref{Lemma: inventory cost}} \label{proof_Lemma: inventory cost}
Corollary \ref{Corollary: inventory cost computation} can be applied due to Assumption \ref{asm: on f}(1). With \eqref{Eq: v_a, b at b}, via integration by parts as shown in the proof of Lemma 6.3 in \cite{mata_bailout_2023}, we obtain
    \begin{align*}
    \begin{split}
        v_{a, b}^{h}(b) = -\rho_{a, b}^{(q)}(b; h) + \frac{1}{q}\left(h(a) + \int^{\infty}_a h'(y) \frac{Z^{(q)}(b - y, \Phi_{q + r})}{Z^{(q)}(b - a, \Phi_{q + r})} \, \diff y\right)Z^{(q)}(b - a).
    \end{split}
    \end{align*}
    Substituting this into \eqref{Eq: v_a, b at x} and replacing $h$ with $f$, we have the desired expression.

\subsection{Proof of Lemma \ref{Lemma: NPV of costs}}\label{Sect: proof of NPV of costs}
Summing the inventory cost and the control costs, by Lemmas \ref{Lemma: control cost} and \ref{Lemma: inventory cost},
\begin{align}
\begin{split}\label{Eq: NPV of costs f}
    v_{a,b}(x) &= A \left(Z^{(q, r)}_{a, b}(x) - r\overline{W}^{(q + r)}(x - b)Z^{(q)}(b - a)\right)  +B - rC_D\overline{\overline{W}}^{(q + r)}(x - b)\\
    &- C_U\left(\overline{Z}^{(q, r)}_{a, b}(x) + \frac{\psi'(0+)}{q} - r\overline{W}^{(q + r)}(x - b)\overline{Z}^{(q)}(b - a) \right), \quad x \in \mathbb{R},
\end{split}
\end{align}
where 
\begin{align*}
    A &\coloneqq \frac{1}{q}\left(f(a) + \int^{\infty}_a f'(y)\frac{Z^{(q)}(b - y, \Phi_{q + r})}{Z^{(q)}(b - a, \Phi_{q + r})}\, \diff y + \frac{r}{\Phi_{q + r}} \frac{C_UZ^{(q)}(b - a) + C_D}{Z^{(q)}(b - a, \Phi_{q + r})} + \frac{qC_U}{\Phi_{q + r}}\right), \\
    B &\coloneqq  - \rho_{a, b}^{(q, r)}(x; f) +r\overline{W}^{(q + r)}(x - b)\rho_{a, b}^{(q)}(b; f) - \int^{x}_b f(y) W^{(q + r)}(x - y)\, \diff y.
\end{align*}
(i) First, we simplify the expression for $A$. 
\begin{lemma} \label{lemma_Z_integral}
    For $a< b$, $\int^\infty_a Z^{(q)}(b - y, \Phi_{q + r})\, \diff y =  (Z^{(q)}(b - a, \Phi_{q + r}) +  r\overline{W}^{(q)}(b - a)) / \Phi_{q+r}$.
\end{lemma}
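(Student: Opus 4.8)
<br>

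The plan is to prove the identity $\int_a^\infty Z^{(q)}(b-y,\Phi_{q+r})\,\diff y = (Z^{(q)}(b-a,\Phi_{q+r}) + r\overline{W}^{(q)}(b-a))/\Phi_{q+r}$ by a direct change of variables followed by an elementary antiderivative computation, using only the definitions of $Z^{(q)}(\cdot,\Phi_{q+r})$ in \eqref{Eq: scale function Z} and its derivative in \eqref{Eq: Z' phi}. First I would substitute $u = b - y$, so that as $y$ ranges over $[a,\infty)$, $u$ ranges over $(-\infty, b-a]$, turning the left-hand side into $\int_{-\infty}^{b-a} Z^{(q)}(u,\Phi_{q+r})\,\diff u$. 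Since $Z^{(q)}(u,\Phi_{q+r}) = e^{\Phi_{q+r}u}$ for $u \le 0$ (because $W^{(q)}$ vanishes on the negative half-line, so the integral term in \eqref{Eq: scale function Z} contributes nothing there), the tail $\int_{-\infty}^0$ converges and equals $1/\Phi_{q+r}$.

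The main step is then to evaluate $\int_0^{b-a} Z^{(q)}(u,\Phi_{q+r})\,\diff u$. Here I would use \eqref{Eq: Z' phi}, which gives $Z^{(q)\prime}(u,\Phi_{q+r}) = \Phi_{q+r}Z^{(q)}(u,\Phi_{q+r}) - rW^{(q)}(u)$, i.e.
\begin{equation*}
    Z^{(q)}(u,\Phi_{q+r}) = \frac{1}{\Phi_{q+r}}\left(Z^{(q)\prime}(u,\Phi_{q+r}) + rW^{(q)}(u)\right).
\end{equation*}
Integrating both sides from $0$ to $b-a$ and using the fundamental theorem of calculus together with $Z^{(q)}(0,\Phi_{q+r}) = 1$ and the definition $\overline{W}^{(q)}(b-a) = \int_0^{b-a}W^{(q)}(y)\,\diff y$ yields
\begin{equation*}
    \int_0^{b-a} Z^{(q)}(u,\Phi_{q+r})\,\diff u = \frac{1}{\Phi_{q+r}}\left(Z^{(q)}(b-a,\Phi_{q+r}) - 1 + r\overline{W}^{(q)}(b-a)\right).
\end{equation*}
Adding the contribution $1/\Phi_{q+r}$ from the tail $\int_{-\infty}^0$ cancels the $-1$ and produces exactly the claimed right-hand side.

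I do not anticipate a serious obstacle here; the only points requiring a little care are justifying the convergence and value of the improper integral over $(-\infty,0]$ (which is immediate from the exponential form of $Z^{(q)}$ on the negatives) and making sure the derivative formula \eqref{Eq: Z' phi}, stated for $x > 0$, is applied on the correct interval $(0, b-a)$ — which is fine since $b > a$. If one prefers to avoid splitting at zero, an alternative is to work directly from the second representation $Z^{(q)}(x,\Phi_{q+r}) = r\int_0^\infty e^{-\Phi_{q+r}z}W^{(q)}(z+x)\,\diff z$ in \eqref{Eq: scale function Z}, swap the order of integration in $\int_a^\infty\int_0^\infty$, and recognize the resulting inner integral; but the antiderivative approach above is the cleanest.
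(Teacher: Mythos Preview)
Your proposal is correct. The split at $u=0$ (equivalently $y=b$) and the evaluation of the tail as $1/\Phi_{q+r}$ match the paper exactly, but the treatment of the main piece $\int_0^{b-a} Z^{(q)}(u,\Phi_{q+r})\,\diff u$ differs: the paper inserts the second representation $Z^{(q)}(x,\Phi_{q+r}) = r\int_0^\infty e^{-\Phi_{q+r}z}W^{(q)}(z+x)\,\diff z$, applies Fubini, and then performs an integration by parts on $\int_0^\infty e^{-\Phi_{q+r}z}\overline{W}^{(q)}(z+b-a)\,\diff z$ to recover $Z^{(q)}(b-a,\Phi_{q+r})$ --- precisely the alternative you mention and set aside at the end. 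Your route, reading off an explicit antiderivative from the first-order relation \eqref{Eq: Z' phi} and applying the fundamental theorem of calculus, is shorter and avoids both the Fubini justification and the extra integration by parts; the paper's approach has the minor advantage of not needing to invoke the differentiability of $Z^{(q)}(\cdot,\Phi_{q+r})$ on $(0,b-a)$, but since that is already recorded in \eqref{Eq: Z' phi} the point is moot.
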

\begin{proof}
By exchanging the order of integration and the second equality of \eqref{Eq: scale function Z},
\begin{multline*}
    \int^b_a Z^{(q)}(b - y, \Phi_{q + r})\, \diff y = r\int^b_a\int_0^{\infty} e^{-\Phi_{q + r} z} W^{(q)}(z + b - y)\, \diff z \, \diff y\\
    = r\int_0^{\infty} e^{-\Phi_{q + r} z} \int^b_a W^{(q)}(z + b - y)\,\diff y\,\diff z= r\int_0^{\infty} e^{-\Phi_{q + r} z}\left(\overline{W}^{(q)}(z + b - a) - \overline{W}^{(q)}(z)\right)\,\diff z.
\end{multline*}
By integration by parts and the second equality of \eqref{Eq: scale function Z},
\begin{align*}
    \int_0^{\infty} e^{-\Phi_{q + r} z} \overline{W}^{(q)}(z + b - a)\,\diff z = \frac{1}{\Phi_{q + r}}\overline{W}^{(q)}(b - a) + \frac{1}{r\Phi_{q+r}} Z^{(q)}(b - a, \Phi_{q + r}) .
\end{align*}
In particular, $\int_0^{\infty} e^{-\Phi_{q + r} z} \overline{W}^{(q)}(z)\, \diff z = (r\Phi_{q + r})^{-1} Z^{(q)}(0, \Phi_{q + r}) = (r\Phi_{q + r})^{-1}$. Thus, 
\begin{align*}
    \int^b_a Z^{(q)}(b - y, \Phi_{q + r})\, \diff y = \frac{1}{\Phi_{q+r}}\left(Z^{(q)}(b - a, \Phi_{q + r}) + r\overline{W}^{(q)}(b - a) - 1\right).
\end{align*}
In addition, $\int^\infty_b Z^{(q)}(b - y, \Phi_{q + r})\, \diff y = \int^\infty_b e^{\Phi_{q+r}(b - y)}\, \diff y = \int^\infty_0 e^{-\Phi_{q + r}y}\, \diff y = (\Phi_{q + r})^{-1}$. By summing these terms, the claim follows.
\end{proof}

By  Lemma \ref{lemma_Z_integral},
\begin{multline*}
    \int^{\infty}_a f'(y) Z^{(q)}(b - y, \Phi_{q + r})\, \diff y  \\
    = \int^{\infty}_a \tilde{f}'(y) Z^{(q)}(b - y, \Phi_{q + r})\, \diff y -  \frac {qC_U} {\Phi_{q+r}} \left(  Z^{(q)}(b - a, \Phi_{q + r}) +  r\overline{W}^{(q)}(b - a) \right).
\end{multline*}
Substituting this into the expression for $A$, we have 
\begin{equation} \label{A_simple}
    A = \frac{1}{q}\left(f(a) + \frac{\Gamma(a, b)}{Z^{(q)}(b - a, \Phi_{q + r})}\right). 
\end{equation}
(ii) Next, we write $B$ in terms of $\tilde{f}: x \mapsto f(x) + qC_Ux$, for $x \in \mathbb{R}$. First, 
\begin{align}\label{Eq: value misc 7}
    \rho_{a, b}^{(q)}(x; f) = \rho_{a, b}^{(q)}(x; \tilde{f}) - qC_U \rho_{a, b}^{(q)}(x; y \mapsto y),
\end{align}
where, due to integration by parts, 
\begin{align}
    \rho_{a, b}^{(q)}(x; y \mapsto y) &= \overline{W}^{(q)}(x - a) a - \overline{W}^{(q)}(x - b) b  + \overline{\overline{W}}^{(q)}(x - a) - \overline{\overline{W}}^{(q)}(x - b).\label{Eq: value misc 1}
\end{align}
In particular, at $x = b$, 
\begin{align}
    \rho_{a, b}^{(q)}(b; y \mapsto y) = \overline{W}^{(q)}(b - a) a + \overline{\overline{W}}^{(q)}(b - a).\label{Eq: value misc 2}
\end{align}
More generally, we have
\begin{align}
    \rho_{a, b}^{(q, r)}(x; f) = \rho_{a, b}^{(q, r)}(x; \tilde{f}) - q C_U \rho_{a, b}^{(q, r)}(x; y \mapsto y), \label{Eq: value misc 3}
\end{align}
where $\rho_{a, b}^{(q, r)}(x; y \mapsto y) = \rho_{a, b}^{(q)}(x; y \mapsto y) + r\int^{x}_b W^{(q + r)}(x - y)\rho_{a, b}^{(q)}(y; z \mapsto z)\, \diff y$.

Applying integration by parts twice, we obtain
\begin{multline}\label{Eq: value misc 6}
    \int^{x}_b f(y) W^{(q + r)}(x - y)\, \diff y\\
    = \int^{x}_b \tilde{f}'(y) \overline{W}^{(q + r)}(x - y)\, \diff y + f(b)\overline{W}^{(q + r)}(x - b) - qC_U\overline{\overline{W}}^{(q + r)}(x - b).
\end{multline}
Substituting \eqref{Eq: value misc 7}, \eqref{Eq: value misc 3}, and \eqref{Eq: value misc 6} into the expression for $B$, for $x \in \mathbb{R}$,  
\begin{align}
\begin{split}\label{Eq: value misc 0}
    B &= - \rho_{a, b}^{(q, r)}(x; \tilde{f}) + q C_U \rho_{a, b}^{(q, r)}(x; y \mapsto y) + r\overline{W}^{(q + r)}(x - b) \rho_{a, b}^{(q)}(b; \tilde{f}) - r\overline{W}^{(q + r)}(x - b) qC_U \rho_{a, b}^{(q)}(b; y \mapsto y)\\
    &~~ - \int^{x}_b \tilde{f}'(y) \overline{W}^{(q + r)}(x - y)\, \diff y - f(b)\overline{W}^{(q + r)}(x - b) + qC_U\overline{\overline{W}}^{(q + r)}(x - b). 
\end{split}
\end{align}
With
\begin{align*}
    \overline{W}^{(q, r)}_{a, b}(x) &\coloneqq \overline{W}^{(q)}(x - a) + r\int^x_b W^{(q + r)}(x - y)\overline{W}^{(q)}(y - a)\, \diff y, \\
    \overline{\overline{W}}^{(q, r)}_{a, b}(x) &\coloneqq \overline{\overline{W}}^{(q)}(x - a) + r\int^x_b W^{(q + r)}(x - y)\overline{\overline{W}}^{(q)}(y - a)\, \diff y,
\end{align*}
and using \eqref{Eq: value misc 1}, we have
\begin{align*}
    \rho_{a, b}^{(q, r)}(x; y \mapsto y) = a
    \overline{W}^{(q, r)}_{a, b}(x) + \overline{\overline{W}}^{(q, r)}_{a, b}(x) - b \overline{W}^{(q, r)}_{b, b}(x) - \overline{\overline{W}}^{(q, r)}_{b, b}(x).
\end{align*}
In terms of \eqref{Eq: Z_a, b} and \eqref{Eq: bar Z_a, b}, we can write
\begin{align*}
    \overline{W}^{(q, r)}_{a, b}(x) &= \frac{1}{q} \left( Z^{(q, r)}_{a, b}(x) - 1 - r\overline{W}^{(q + r)}(x - b) \right), \\
    \overline{\overline{W}}^{(q, r)}_{a, b}(x) &= \frac{1}{q} \left( \overline{Z}^{(q, r)}_{a, b}(x) - (x - a) - r\overline{W}^{(q + r)}(x - b)(b - a) -r\overline{\overline{W}}^{(q + r)}(x - b)\right). 
\end{align*}
Moreover, by Equation (6) of \cite{loeffen_occupation_2014}, we can write $\overline{W}^{(q, r)}_{b, b}(x) = \overline{W}^{(q + r)}(x - b)$. Then, by noticing that $\overline{\overline{W}}^{(q, r)}_{b, b}$ is an anti-derivative of $\overline{W}^{(q, r)}_{b, b}$ and using the identity $\overline{W}^{(q, r)}_{b, b}(x) = \overline{W}^{(q + r)}(x - b)$, we obtain $\overline{\overline{W}}^{(q, r)}_{b, b}(x) = \overline{\overline{W}}^{(q + r)}(x - b)$. Substituting these,
\begin{multline}
    \rho_{a, b}^{(q, r)}(x; y \mapsto y) = \frac 1 q \left(  a Z^{(q, r)}_{a, b}(x) +  \overline{Z}^{(q, r)}_{a, b}(x) - x - r \overline{W}^{(q + r)}(x - b)b - r\overline{\overline{W}}^{(q + r)}(x - b) \right) \\
    - \overline{\overline{W}}^{(q + r)}(x - b) - \overline{W}^{(q + r)}(x - b)b. \label{Eq: value misc 4}
\end{multline}
Substituting \eqref{Eq: value misc 2} and \eqref{Eq: value misc 4} in \eqref{Eq: value misc 0}, 
\begin{align*}
\begin{split}
    B &= - \rho_{a, b}^{(q, r)}(x; \tilde{f}) + C_U   \left(  a Z^{(q, r)}_{a, b}(x) +  \overline{Z}^{(q, r)}_{a, b}(x) - x - r \overline{W}^{(q + r)}(x - b)b - r\overline{\overline{W}}^{(q + r)}(x - b) \right) \\
    &~~ + r\overline{W}^{(q + r)}(x - b) \rho_{a, b}^{(q)}(b; \tilde{f}) - r\overline{W}^{(q + r)}(x - b) qC_U \left( \overline{W}^{(q)}(b - a) a + \overline{\overline{W}}^{(q)}(b - a) \right) \\
    &~~ - \int^{x}_b \tilde{f}'(y) \overline{W}^{(q + r)}(x - y)\, \diff y - \tilde{f}(b)\overline{W}^{(q + r)}(x - b). 
\end{split}
\end{align*}
(iii) Substituting this expression for $B$ and \eqref{A_simple} into \eqref{Eq: NPV of costs f}, and simplifying, we obtain the desired expression.

\subsection{Proof of Lemma \ref{Lemma: smoothness lemma}}\label{Sect: proof of smoothness lemma}
By direct computation and the smoothness of the scale functions, we obtain the following results for $x \in \mathbb{R}\backslash \{a, b\}$ and for any continuous and a.e.-differentiable function $h: \mathbb{R} \to \mathbb{R}$:
\begin{align}
    \frac{\diff}{\diff x} Z^{(q, r)}_{a, b}(x) &= qW^{(q, r)}_{a, b}(x) + rW^{(q + r)}(x - b) Z^{(q)}(b - a),\label{Eq: Z_a, b derivative}\\
    \frac{\diff}{\diff x} \rho_{a, b}^{(q)}(x; h) &= -W^{(q)}(x - b)h(b) + W^{(q)}(x - a)h(a) + \int^{b}_a W^{(q)}(x - y)h'(y)\, \diff y.\label{Eq: rho derivative}
\end{align}
We further compute $\frac{\diff}{\diff x}\rho_{a, b}^{(q, r)}(x; h)$. Recalling \eqref{Eq: rho_r}, by partial integration, 
\begin{align}\label{Eq: rho_r derivative part 1}
    \frac{\diff}{\diff x} \rho_{a, b}^{(q, r)}(x; h) = \frac{\diff}{\diff x}\rho_{a, b}^{(q)}(x; h) +  rW^{(q + r)}(x - b)\rho_{a, b}^{(q)}(b; h) + C, \quad x\in \mathbb{R}\backslash\{a, b\},
\end{align}
where, by \eqref{Eq: rho derivative} and Equation (6) of \cite{loeffen_occupation_2014},
\begin{multline}\label{Eq: misc 7}
    C \coloneqq r\int^{x}_{b} W^{(q + r)}(x - y)\frac{\diff}{\diff y}\rho_{a, b}^{(q)}(y; h)\, \diff y = -h(b)\left(W^{(q + r)}(x - b) - W^{(q)}(x - b)\right)\\
    + rh(a)\int^{x}_{b} W^{(q + r)}(x - y)W^{(q)}(y - a)\, \diff y + r\int^{x}_{b} W^{(q + r)}(x - y) \int^{b}_a W^{(q)}(y - z)h'(z)\, \diff z\, \diff y. 
\end{multline}
Substituting \eqref{Eq: misc 7} in \eqref{Eq: rho_r derivative part 1} and using \eqref{Eq: W_b, a}, for $x\in \mathbb{R}\backslash\{a, b\}$,
\begin{multline}\label{Eq: rho_r derivative part 2}
    \frac{\diff}{\diff x} \rho_{a, b}^{(q, r)}(x; h) = h(a)W^{(q, r)}_{a, b}(x) + rW^{(q + r)}(x - b)\rho_{a, b}^{(q)}(b; h) - h(b)W^{(q + r)}(x - b) + \rho_{a, b}^{(q, r)}(x; h').
\end{multline}
We obtain \eqref{Eq: value derivative} by differentiating \eqref{Eq: NPV of costs} using \eqref{Eq: Z_a, b derivative} and \eqref{Eq: rho_r derivative part 2}. The statement that \eqref{Eq: value derivative} is defined on $\mathbb{R}\backslash \{a\}$ can be confirmed by inspecting its left- and right-hand limits and using the smoothness of scale functions (see Remark \ref{remark_smoothness_scale_function}). To obtain the second derivative of $v_{a, b}$, we directly differentiate \eqref{Eq: value derivative}. This gives \eqref{Eq: value 2nd derivative}, which is again defined on $\mathbb{R}\backslash \{a\}$ by the smoothness of scale functions. 

\subsection{Proof of Lemma \ref{Lemma: auxiliary result}}\label{Sect: proof of auxiliary lemma}
By \eqref{Eq: rho} and the definition of $\tilde{f}$, for $x \in \mathbb{R}$,
\begin{align}\label{Eq: rho_r f' f_tilde'}
    \rho_{a, b}^{(q, r)}(x; f') &= \rho_{a, b}^{(q,r)}(x; \tilde{f}') - q C_U \rho_{a, b}^{(q,r)}(x; 1),
\end{align}
where, with $\rho_{a, b}^{(q)}(x; 1) = \int^{b}_a W^{(q)}(x - y) \, \diff y = \frac{1}{q}(Z^{(q)}(x - a) - Z^{(q)}(x - b))$, we can write
\begin{align*}
    \rho_{a, b}^{(q,r)}(x; 1) &= \rho_{a, b}^{(q)}(x; 1) + r\int^{x}_b W^{(q + r)}(x - y)\rho_{a, b}^{(q)}(y; 1) \diff y= \frac{1}{q} \left(Z^{(q, r)}_{a, b}(x) - Z^{(q, r)}_{b, b}(x)\right).
\end{align*}
Using Equation (6) of \cite{loeffen_occupation_2014}, we obtain $Z^{(q, r)}_{b, b}(x) = Z^{(q + r)}(x - b)$, and thus,
\begin{align}
    \rho_{a, b}^{(q, r)}(x; f') &= \rho_{a, b}^{(q, r)}(x; \tilde{f}') - C_U\left(Z^{(q, r)}_{a, b}(x) - Z^{(q + r)}(x - b)\right).\label{Eq: rho_r f' f_tilde'}
\end{align}
Moreover, for $x \in \mathbb{R}$, we have 
\begin{align}
\begin{split}\label{Eq: value derivative misc 2}
    \int^{x}_{b} W^{(q + r)}(x - y)f'(y)\, \diff y &= \int^{x}_{b} W^{(q + r)}(x - y)\tilde{f}'(y)\, \diff y - qC_U\overline{W}^{(q + r)}(x - b). 
\end{split}
\end{align}
Rewriting \eqref{Eq: v_a, b at b} with $h = f'$ by applying Lemma \ref{lemma_Z_integral} and \eqref{Eq: rho_r f' f_tilde'}, and simplifying using \eqref{Eq: gamma function}, we obtain
\begin{align} \label{Eq: v^f' misc 3}
\begin{split}
    v_{a, b}^{f'}(b) &= -\rho_{a, b}^{(q)}(b; \tilde{f}') + qC_U\overline{W}^{(q)}(b - a) + \left(\frac{q}{\Phi_{q + r}} \frac{Z^{(q)}(b - a; \Phi_{q + r})}{Z^{(q)}(b - a)}\right)^{-1}\\
    &~~ \times \left(-\frac{r}{\Phi_{q + r}}\rho_{a, b}^{(q)}(b; \tilde{f}') + \int^{\infty}_a \tilde{f}'(y) Z^{(q)}(b - y, \Phi_{q + r})\, \diff y - \frac{qC_U}{\Phi_{q + r}} Z^{(q)}(b - a, \Phi_{q + r})\right)\\
    &= -\rho_{a, b}^{(q)}(b; \tilde{f}') - C_U + \frac{Z^{(q)}(b - a)}{qZ^{(q)}(b - a; \Phi_{q + r})}\gamma(a, b). 
    \end{split}
\end{align}
Substituting \eqref{Eq: rho_r f' f_tilde'}, \eqref{Eq: value derivative misc 2}, and \eqref{Eq: v^f' misc 3} into \eqref{Eq: v_a, b at x} with $h = f'$, and simplifying, we obtain the desired result.

\subsection{Proof of Lemma \ref{Lemma: F asymptotics}}\label{Sect: proof of F asymptotics}
(i): As $b \mapsto \Gamma(a, b)$ is continuous on $(a, \infty)$, the first claim holds by setting $b = a+$ in \eqref{Eq: Gamma function} and noting that $Z^{(q)}(a - y, \Phi_{q + r}) = \exp(\Phi_{q + r}(a - y))$ for $y > a$.

\noindent (ii): By the second equality of \eqref{Eq: scale function Z}, we have two bounds for $Z^{(q)}(\cdot, \Phi_{q + r})$. First, with $W_{\Phi_{q}}(x) = e^{-\Phi_q x}W^{(q)}(x)$ (see page 247 of \cite{kyprianou_fluctuations_2014}), for $x \in \mathbb{R}$,
\begin{equation*}
    Z^{(q)}(x, \Phi_{q + r}) = r\int^\infty_0 e^{-\Phi_{q + r} y}W^{(q)}(x + y)\, \diff y = re^{\Phi_{q} x}\int^\infty_0 e^{-(\Phi_{q + r} - \Phi_{q}) y}W_{\Phi_{q}}(x + y)\, \diff y.
\end{equation*}
Since $W_{\Phi_q}$ is increasing, $\frac{re^{\Phi_{q} x} W_{\Phi_{q}}(x)}{\Phi_{q + r} - \Phi_{q}} \leq Z^{(q)}(x, \Phi_{q + r}) \leq \frac{re^{\Phi_{q} x} W_{\Phi_{q}}(\infty)}{\Phi_{q + r} - \Phi_{q}}$, where $W_{\Phi_{q}}(\infty) \coloneqq \lim_{x \to \infty} W_{\Phi_{q}}(x)$ is well-defined and finite. Thus, for $b > b'$ with $b' > a$ fixed,
\begin{align*}
    \left| \tilde{f}'(z) \frac{Z^{(q)}(b - z, \Phi_{q + r})}{Z^{(q)}(b - a, \Phi_{q + r})} 1_{\{a \leq z\}} \right| &\leq \left|\tilde{f}'(z)\right| \frac{e^{\Phi_q (b - z)} W_{\Phi_{q}}(\infty)}{e^{\Phi_q (b - a)} W_{\Phi_{q}}(b - a)} \leq \left|\tilde{f}'(z)\right| e^{-\Phi_q (z - a)}\frac{ W_{\Phi_{q}}(\infty)}{W_{\Phi_{q}}(b' - a)},
\end{align*}
which is integrable. Thus, dominated convergence gives
\begin{align*}
    \lim_{b \to \infty} \frac{\Gamma(a, b)}{Z^{(q)}(b - a, \Phi_{q + r})} = \int^{\infty}_a \lim_{b\to\infty} \tilde{f}'(z) \frac{Z^{(q)}(b - z, \Phi_{q + r})}{Z^{(q)}(b - a, \Phi_{q + r})}\, \diff z &= \int^{\infty}_0 e^{-\Phi_q z} \tilde{f}'(z + a)\, \diff z.
\end{align*}

\subsection{Proof of Lemma \ref{Lemma: verification lemma}}\label{Sect: proof of verification lemma}
Fix an admissible strategy $\pi \in \Pi$. For a c\`adl\`ag process $A$, denote its jump at time $s$ by $\Delta A(s) \coloneqq A(s) - A(s-)$ and its continuous part by $A^c$, such that $A(s) = A^c(s) + \sum_{0 \leq u \leq s}\Delta A(u)$. By first applying It\^o's lemma and then rearranging terms, we obtain
\begin{align*}
    w(x) &= -\int^{t \wedge \tau_n}_0 e^{-qs} (\mathcal{L}-q) w(Y^\pi(s-))\, \diff s\\
    &~~ - \int_{[0, t \wedge \tau_n]} e^{-qs}w'(Y^\pi(s-))\, \diff R^{\pi,c}(s) + C_D \int_{[0, t \wedge \tau_n]} e^{-qs} \nu^\pi(s)\, \diff N^r(s)\\
    &~~ - \int^{t \wedge \tau_n}_0 re^{-qs} [C_D \nu^\pi(s) + w(Y^\pi(s-) - \nu^\pi(s)) - w(Y^\pi(s-))]\, \diff s\\
    &~~ - \sum_{0\leq s \leq t \wedge \tau_n} e^{-qs}[ w(Y^\pi(s-) + \Delta X(s) + \Delta R^\pi(s)) - w(Y^\pi(s-) + \Delta X(s))]\\
    &~~ - M(t \wedge \tau_n) + e^{-q(t\wedge \tau_n)} w(Y^\pi(t\wedge \tau_n)),
\end{align*}
where $(M(t \wedge \tau_n); t \geq 0)$ is defined in Equation A.1 in \cite{perez_optimality_2017}. Noting that $Y^\pi(s-)$ is bounded a.s on $[0, t \wedge \tau_n]$ and using Corollary 4.6 from \cite{kyprianou_fluctuations_2014}, we have that the process $M(t \wedge \tau_n)$ is a zero-mean $\mathbb{P}_x$-martingale. Now, since $-C_U \leq w'(x)$ and $(\mathcal{L}-q)w(x) + r(\mathcal{M}w(x) - w(x)) + f(x) \geq 0$ for all $x \in \mathbb{R}$ by assumption, 
\begin{align*}
    w(x) &\leq -\int^{t \wedge \tau_n}_0 e^{-qs} \left[(\mathcal{L}-q) w(Y^\pi(s-)) + r(\mathcal{M}w(Y^\pi(s-)) - w(Y^\pi(s-)))\right]\, \diff s\\
    &~~ + C_D \int_{[0, t \wedge \tau_n]} e^{-qs} \nu^\pi(s)\, \diff N^r(s) + C_U\int_{[0, t \wedge \tau_n]} e^{-qs}\, \diff R^{\pi}(s)\\
    &~~ - M(t \wedge \tau_n) + e^{-q(t\wedge \tau_n)} w(Y^\pi(t\wedge \tau_n))\\
    &\leq \int^{t \wedge \tau_n}_0 e^{-qs} f(Y^\pi(s))\, \diff s + C_D \int_{[0, t \wedge \tau_n]} e^{-qs} \nu^\pi(s)\, \diff N^r(s) + C_U\int_{[0, t \wedge \tau_n]} e^{-qs}\, \diff R^{\pi}(s)\\
    &~~ - M(t \wedge \tau_n) + e^{-q(t\wedge \tau_n)} w(Y^\pi(t\wedge \tau_n)).
\end{align*}
Taking expectation, we obtain
\begin{multline}\label{Eq: misc 10}
    w(x) \leq \mathbb{E}_x\left[\int^{t \wedge \tau_n}_0 e^{-qs} f(Y^\pi(s))\, \diff s + C_D \int_{[0, t \wedge \tau_n]} e^{-qs} \nu^\pi(s)\, \diff N^r(s) + C_U\int_{[0, t \wedge \tau_n]} e^{-qs}\, \diff R^{\pi}(s)\right]\\
    + \mathbb{E}_x\left[e^{-q(t\wedge \tau_n)} w(Y^\pi(t\wedge \tau_n))\right].
\end{multline}
We take $t, n \uparrow \infty$ to complete the proof. As assumed in the lemma, for any admissible strategy $\pi$, $\limsup_{t, n \uparrow \infty} \mathbb{E}_x\left[e^{-q(t\wedge \tau_n)} w(Y^\pi(t\wedge \tau_n))\right] \leq 0$. Taking limit of the other terms on the right-hand side of \eqref{Eq: misc 10}, because $\pi$ is admissible, dominated convergence gives 
\begin{align*}
    w(x) &\leq \mathbb{E}_x\left[\int^{\infty}_0 e^{-qs} f(Y^\pi(s))\, \diff s + \int_{[0, \infty)} e^{-qt}(C_U \, \diff R^\pi(t) + C_D \, \diff L^\pi(t))\right] = v^\pi(x).
\end{align*}
The proof is now complete, as $\pi \in \Pi$ was chosen arbitrarily.

\subsection{Proof of Lemma \ref{Lemma: generator M}}\label{Sect: proof of verification generator}
(i) Applying $\Gamma(a^*,b^*)=0$ in  \eqref{Eq: NPV of costs}, for $x \leq a^*$, notice that $v_{a^*, b^*}(x) = (-C_U\psi'(0+) + \tilde{f}(a^*))/q - C_Ux$. It follows that $(\mathcal{L}-q)v_{a^*, b^*}(x) + f(x) = \tilde{f}(x) - \tilde{f}(a^*)$, which is non-negative. This shows the first statement of the lemma. \\
\noindent (ii) For $x > a^*$, the same argument as in Lemma 7.2 of \cite{mata_bailout_2023} applies, since the form of $v_{a^*, b^*}^{LR}$ in terms of scale functions is identical to that considered in Lemma 7.2 of \cite{mata_bailout_2023}. Therefore, for $x > a^*$, we have, 
\begin{align*}
    (\mathcal{L} - q) v_{a^*, b^*}^{LR}(x) &= \begin{cases}
        0, & a^* < x < b^*,\\
        -r\left(C_D(x - b^*) + v_{a^*, b^*}^{LR}(b^*) - v_{a^*, b^*}^{LR}(x)\right), & x \geq b^*\\
    \end{cases} \\
    (\mathcal{L} - q) v_{a^*, b^*}^{f}(x) &= \begin{cases}
        -f(x), & a^* < x < b^*,\\
        -r\left(v_{a^*, b^*}^{f}(b^*) - v_{a^*, b^*}^{f}(x)\right) - f(x), & x \geq b^*.\\
    \end{cases}
\end{align*}
Summing these, we get the desired expression.

\subsection{Proof of Lemma \ref{Lemma: verification limit}}\label{Sect: proof of verification limit}
We first establish the following auxiliary result. Define
\begin{equation}\label{Eq: g function}
    g(x) \coloneqq \mathbb{E}_x \left[\int^\infty_0 e^{-qt} f(X(t))\, \diff t\right] = q^{-1} \mathbb{E}\left[f(X(e_q) + x) \right],
\end{equation}
    where $e_q$ is an independent exponential random variable with rate $q$.
\begin{lemma}\label{Lemma: reg/slow varying}
    Suppose $f$ is a convex and slowly or regularly varying function at $+\infty$ (resp., $-\infty$), with $|f(x)| \to \infty$ as  $x\to +\infty$ (resp., $x\to -\infty$). Then, we have $\lim_{x \uparrow \infty} |g(x)/f(x)| = 1/q$ (resp., $\lim_{x \downarrow -\infty} |g(x)/f(x)| = 1/q$).
\end{lemma}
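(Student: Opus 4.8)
The statement concerns the asymptotic behaviour of the resolvent $g(x) = q^{-1}\E[f(X(e_q)+x)]$ relative to $f$ itself, where $e_q$ is an independent $\mathrm{Exp}(q)$ random variable. The plan is to treat the two cases symmetrically; I describe the $x \uparrow \infty$ case. The idea is that for large $x$, the random shift $X(e_q)$ is negligible compared to $x$ on the scale at which $f$ varies, so $f(X(e_q)+x) \approx f(x)$ in a suitable averaged sense. First I would write $g(x)/f(x) = q^{-1}\,\E[f(X(e_q)+x)/f(x)]$ and aim to pass the limit inside the expectation via dominated convergence, using the representation theory of regularly varying functions.

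The key technical step is the pointwise convergence $f(X(e_q)+x)/f(x) \to 1$ as $x \to \infty$, for (almost) every realization of $X(e_q)$. This follows from the definition of regular (or slow) variation: if $f$ is regularly varying at $+\infty$ with index $\alpha$, then $f(x+u)/f(x) = f(x(1+u/x))/f(x) \to 1^{\alpha} = 1$ for each fixed $u \in \R$, since $1+u/x \to 1$ and regularly varying functions satisfy $f(\lambda x)/f(x) \to \lambda^{\alpha}$ uniformly on compact $\lambda$-sets bounded away from $0$ (the uniform convergence theorem for regular variation). The same holds for slowly varying $f$ ($\alpha=0$). Since $X(e_q)$ is a.s.\ finite, $f(X(e_q)+x)/f(x) \to 1$ a.s.\ (taking absolute values if $f$ is eventually negative, which can only happen in the slowly varying case; the regularly varying case with $|f(x)|\to\infty$ forces eventual constant sign by monotonicity of $f'$ from convexity).

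The main obstacle is constructing an integrable dominating function for $|f(X(e_q)+x)/f(x)|$ uniformly in large $x$. Here convexity does the heavy lifting. By convexity, $f$ is eventually monotone; combined with regular variation and the Potter bounds, for any $\varepsilon>0$ there is $A$ such that for $x \geq A$ and $y \geq A$ one has $f(y)/f(x) \leq C\max\{(y/x)^{\alpha+\varepsilon},(y/x)^{\alpha-\varepsilon}\}$. Splitting the expectation over $\{X(e_q) \geq -x/2\}$ and its complement, on the first event $X(e_q)+x \in [x/2,\infty)$ and Potter's bound gives $|f(X(e_q)+x)/f(x)| \lesssim (1+|X(e_q)|/x)^{\alpha+\varepsilon} \leq (1+|X(e_q)|)^{\alpha+\varepsilon}$, which is integrable by Assumption~\ref{asm: on X} (exponential moments of $X(1)$, hence of $X(e_q)$, control any polynomial moment). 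On the complementary event $\{X(e_q) < -x/2\}$, which has probability decaying exponentially in $x$ by the exponential-moment assumption, one bounds $|f(X(e_q)+x)|$ crudely by its polynomial growth and $|f(x)|$ from below, and the exponential decay of the probability dominates. Dominated convergence then yields $\E[f(X(e_q)+x)/f(x)] \to 1$, i.e.\ $g(x)/f(x) \to 1/q$; taking absolute values throughout handles the sign issue, giving $\lim_{x\uparrow\infty}|g(x)/f(x)| = 1/q$. The case $x\downarrow-\infty$ is identical after reflecting.
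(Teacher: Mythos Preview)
Your proposal is correct and follows the same high-level strategy as the paper: write $g(x)/f(x)=q^{-1}\E[f(X(e_q)+x)/f(x)]$, establish pointwise convergence to $1$ via the uniform convergence theorem for regularly varying functions, and pass the limit through the expectation by dominated convergence. The genuine difference lies in how the dominating function is built. You split on the $x$-dependent event $\{X(e_q)\ge -x/2\}$, invoke Potter bounds to get a polynomial envelope $(1+|X(e_q)|)^{\alpha+\varepsilon}$ on that event, and dispose of the complement via the exponential tail of $X(e_q)$ (Assumption~\ref{asm: on X}) together with the polynomial growth of $f$. The paper instead splits on the $x$-independent event $\{X(e_q)\ge 0\}$: for $y\ge 0$ it constructs an explicit dominating function $h(y)=M_1\vee |f(ya_0/(a_0-1))|$ by combining the uniform convergence theorem on $[0,(a_0-1)x]$ with a direct monotonicity bound for $y>(a_0-1)x$; for $y<0$ it exploits convexity to get $|f(x+y)|\le |m|\vee|f(y)|\vee|f(x)|$, which after dividing by $|f(x)|$ yields an $x$-free bound. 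Your route is the textbook regular-variation argument and is arguably cleaner conceptually; the paper's route avoids quoting Potter bounds and never needs the exponential tail of $X(e_q)$ or a Cauchy--Schwarz step, relying instead only on convexity and polynomial integrability of $f(X(e_q))$.
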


\begin{proof}
We show the claim for $x \to \infty$; the other case can be treated analogously. For such $f$, there exists some $c \in \mathbb{R}$ such that, for all $\alpha > 0$, 
\[u(\alpha) \coloneqq \lim_{x \to \infty} \frac{f(\alpha x)}{f(x)} = \begin{cases} 1, & \text{if $f$ is slowly varying at $\infty$,}\\
\alpha^c, & \text{if $f$ is regularly varying at $\infty$.}
\end{cases}\]
Fix some $1 < a_0 < 2$ and $\varepsilon > 0$, by the uniform convergence theorem for regularly varying functions (see Theorem 1.5.2) in \cite{Bingham_Goldie_Teugels_1987}), there exists some $x_0 > 0$ such that
\[\sup_{a \in [1, a_0], x > x_0}\left|\frac{f(ax)}{f(x)} - u(a)\right| < \varepsilon.\]

Define
\begin{align} \label{def_x_1}
    x_1 \coloneqq \inf\{x \in \mathbb{R}: |f| \geq 1 \text{ and $|f|$ is strictly increasing on $(x, \infty)$}\} < \infty.
\end{align}
For all $x > x_0 \vee x_1, y \in [0, (a_0 - 1)x]$,
\[
1 \leq \left| \frac {f(y + x)} {f(x)} \right| \leq \left|  \frac {f(a_0 x)} {f(x)}\right| \leq u(a_0) + \varepsilon.
\]
Hence, setting $M_1 \coloneqq u(a_0) + \varepsilon$, we have
\[ \sup_{x > x_0 \vee x_1, y \in [0, (a_0 - 1)x]} \left|\frac{f(y + x)}{f(x)}\right| < M_1 < \infty. \]

Consider the following integral, which depends on the value of $x$,
\[\mathbb{E}\left[\frac{f(X(e_q) + x)}{f(x)}; X(e_q) \geq 0\right].\]
We seek an integrable function that bounds the integrand for all sufficiently large $x$. Fix $x > x_0 \vee x_1$, for $y \in [0, (a_0 - 1)x]$, we have from the analysis above that $|f(y + x)/f(x)| \leq M_1$. For $y > (a_0 - 1)x$, $f(y + x)/f(x)$ can be bounded by a polynomial function, as
\[\left|\frac{f(x + y)}{f(x)}\right| < |f(x + y)| < \left|f \left(\frac y {a_0-1} + y \right) \right| = \left|f\left(y\frac{a_0}{a_0 - 1}\right)\right|,\]
where the first inequality follows as $x > x_1$ and the second inequality follows as $x > x_1$ and $y > (a_0 - 1)x$. Thus, for $y \geq 0$, we have
\begin{align*}
    \sup_{x > x_0 \vee x_1}\left|\frac {f(y+x)} {f(x)}\right| \leq M_1 \vee f\left(y\frac{a_0}{a_0 - 1} \right) \eqqcolon h(y),
\end{align*}
where $h$ is of polynomial growth such that $\mathbb{E} [h(X(e_q))] < \infty$ by Assumption \ref{asm: on X}. Hence, by dominated convergence, we obtain
\[\lim_{x\to\infty} \mathbb{E}\left[\frac{f(X(e_q) + x)}{f(x)}; X(e_q) \geq 0\right] = \mathbb{P}(X(e_q) \geq 0).\]

Now, we consider the case with the event $\{X(e_q) < 0\}$. Let $m$ denote the minimum value of $f$ if it exists (i.e. if $f$ is not monotone); otherwise, set $m$ to zero. By convexity, for all $y \leq 0$ and $x \geq 0$,
\[
|f(x+y)| \leq |m| \vee |f(y)| \vee |f(x)|.
\]
For $x > x_1$ (as in \eqref{def_x_1}), we have  $|f(x+y)/f(x)| \leq |m| \vee |f(y)| \vee 1$.

Thus, there exists some $x_2 > 0$ and a function $\tilde{h}: \mathbb{R} \to \mathbb{R}_+$ with $\mathbb{E} [\tilde{h}(X(e_q))] < \infty$, such that for all $y \in \mathbb{R}$, we have $\sup_{x > x_2}|f(y+x)/f(x)| \leq \tilde{h}(y)$. Applying dominated convergence, we obtain $\lim_{x\to\infty} \mathbb{E}[f(X(e_q) + x)/f(x); X(e_q) < 0] = \mathbb{P}(X(e_q) < 0)$.
\end{proof}

Now, we are ready to show Lemma \ref{Lemma: verification limit}. Following the same argument as the proof of Lemma 7.5 in \cite{yamazaki_inventory_2017}, we have the following upper bound:
\begin{align}\label{Eq: upper bound}
\begin{split}
    \mathbb{E}_x\left[e^{-q(t\wedge \tau_n)} w(Y^\pi(t\wedge \tau_n))\right] &\leq \mathbb{E}_x\left[ e^{-q(t \wedge \tau_n)} \mathbb{E}_{Y^\pi(t \wedge \tau_n)} \left[\int^\infty_{0}e^{-qs}f(X(s))\, \diff s\right]\right]\\
    &= \mathbb{E}_x\left[ e^{-q(t \wedge \tau_n)} g(Y^\pi(t \wedge \tau_n))\right].
    \end{split}
\end{align}
We show
\begin{align} \label{g_bounded}
    \exists M, N < \infty \quad \textrm{such that } |g(x)| \leq \max(M|f(x)|, N).
\end{align} 
(i) If $|f(x)| \to \infty$ as $x \to +\infty$ and also as $x \to -\infty$, then, by Lemma \ref{Lemma: reg/slow varying}, \eqref{g_bounded} holds.

\noindent (ii) Let $|f|$ be bounded as $x\to -\infty$ and slowly or regularly varying at $+\infty$, with $|f|$ unbounded as $x \to +\infty$. Again using Lemma \ref{Lemma: reg/slow varying}, for some positive $M < \infty$ and $x' \in \mathbb{R}$, $|g(x)|\leq M|f(x)|$ for $x > x'$. Since $|f|$ is bounded as $x \to -\infty$, and $f$ is convex, it follows that $f$ is a monotonically increasing function. Consequently, for $x < x'$, we have $g(x') \geq g(x) \geq q^{-1} \mathbb{E}\left[f(-\infty) \right]$, which is finite. Thus, we again establish \eqref{g_bounded}.

\noindent (iii) The case where $|f|$ is bounded as $x \to +\infty$ and slowly or regularly varying at $-\infty$, with $|f|$ unbounded as $x \to -\infty$, follows analogously to (ii). 

\noindent (iv) Finally, Assumption \eqref{asm: on f} excludes the case $f$ is bounded on $\mathbb{R}$. Hence, in all cases, \eqref{g_bounded} holds.

To complete the proof, we show that the right-hand side of \eqref{Eq: upper bound} limits to $0$. Since $\pi$ is admissible, we have $\mathbb{E}_x[\int^\infty_{0}e^{-qs} |f(Y^\pi(s))|\, \diff s] < \infty$, thus $\lim_{t \uparrow \infty}  \mathbb{E}_x\left[e^{-qt} f(Y^\pi(t))\right] = 0$. Then, $\lim_{t \uparrow \infty} \mathbb{E}_x\left[e^{-qt} g(Y^\pi(t))\right] = 0$ follows by the bound in \eqref{g_bounded}. This, in conjunction with \eqref{Eq: upper bound}, completes the proof of Lemma \ref{Lemma: verification limit}.

\bibliographystyle{abbrv}
\bibliography{main}
\end{document}